\newtheorem{lemma}{Lemma}[section]
\newtheorem{theorem}[lemma]{Theorem}
\newtheorem{proposition}[lemma]{Proposition}
\newtheorem{definition}[lemma]{Definition}
\newtheorem{remark}{Remark}
\newcommand{\N}{\mathbb{N}} 
\newcommand{\R}{\mathbb{R}}
\newcommand{\dt}{{\delta t}} 
\newcommand{\dx}{{\delta x}}
\newcommand{\CFL}{{\rm CFL}}
\newcommand{\dd}{{\mathrm{d}}}
\renewcommand{\t}{{\mathsf{t}}}
\newcommand{\normetriple}[1]{\vert\hspace{-0.3 mm}\vert\hspace{-0.3 mm}\vert #1 \vert\hspace{-0.3 mm}\vert\hspace{-0.3 mm}\vert}
\begin{document}

\title{Asymptotic behavior of splitting schemes involving
time-subcycling techniques}


\author{%
{\sc Guillaume Dujardin\thanks{Email: guillaume.dujardin@inria.fr}}\\[2 pt]
Inria Lille Nord-Europe, \'Equipe M\'EPHYSTO, and \\
Laboratoire Paul Painlev\'e, Universit\'e de Lille, CNRS UMR 8525\\[6 pt]
{\sc and}\\[6 pt]
{\sc Pauline Lafitte\thanks{Corresponding author.
Email: pauline.lafitte@centralesupelec.fr}}\\[2 pt]
CentraleSup\'elec, Lab. MAS, and \\
F\'ed\'eration de Math\'ematiques, FR CNRS 3487}

\maketitle
 
\bigskip 
\begin{abstract}
  {This paper deals with the numerical integration of well-posed multiscale
  systems of ODEs or evolutionary PDEs. As these systems 
  appear naturally in engineering problems,
  time-subcycling techniques are widely used every day to improve
  computational efficiency.
  These methods rely on a decomposition of the vector field in a fast part
  and a slow part and take advantage of that decomposition.
  This way, if an unconditionnally stable (semi-)implicit scheme cannot be easily implemented, one can integrate the fast equations with a much smaller time
  step than that of the slow equations, instead of having to integrate the
  whole system with a very small time-step to ensure stability.
  Then, one can build a numerical integrator using a standard composition
  method, such as a Lie or a Strang formula for example.
  Such methods are primarily designed to be convergent in short-time
  to the solution of the original problems.
  However, their longtime behavior rises interesting questions, the answers
  to which are not very well known.
  In particular, when the solutions of the problems converge in time to an
  asymptotic equilibrium state, the question of the asymptotic accuracy
  of the numerical longtime limit of the schemes
  as well as that of the rate of convergence
  is certainly of interest.
  In this context, the asymptotic error is defined as the difference
  between the exact and numerical asymptotic states.
  The goal of this paper is to apply that kind of numerical methods
  based on splitting schemes with subcycling to some simple
  examples of evolutionary ODEs and PDEs that have attractive
  equilibrium states, to address the aforementioned questions
  of asymptotic accuracy, to perform a rigorous analysis, and to compare
  them with their counterparts without subcycling.
  Our analysis is developed on simple linear ODE and PDE toy-models and
  is illustrated with several numerical experiments on these toy-models
  as well as on more complex systems.}
  {Lie and Strang splitting schemes - Subcycling - $\theta$-schemes - Longtime
  asymptotics - Asymptotic error - Asymptotic order
  }
\end{abstract}

\section{Introduction}
\label{sec:Intro}
Time-subcycling is a way to speed up numerical computations for an
evolutionary multiscale problem by splitting the underlying operator
and treating its different parts with adapted time-steps to build
up a numerical integrator {that is less costly}. {The idea
is to split the problem into subproblems, each with an identified timescale.
Let $\dt>0$ 
be a discretization time-step ensuring that the subproblem with
the larger time scale is stable.
Let $N>>1$ be the ratio of the time scales the two subproblems.
A subcycling technique consists in iterating $N$ times a scheme for the second subproblem with time-step $\dt/N$ so that the time matches $\dt$. We emphasize that these methods are very useful if an unconditionnally stable (semi-)implicit scheme is difficult or costly to implement.}

The analysis of these methods over finite time intervals is rather
similar to that of composition methods over finite time intervals; see \cite{mll}.
In contrast, our aim {in this paper}
is to determine how well the subcycling techniques capture
the right asymptotic state for continuous dynamical systems described by ODEs or
PDEs, the solutions of which converge to a steady state as time goes to
infinity. If there are no unconditionnally stable (semi-)implicit schemes at hand, in order to save computational time, the subcycling techniques have
been very widely used for schemes associated with multiscale systems, which
have (at least) one component that has to be computed through an explicit scheme
{and are therefore} constrained by a limitation of the time-step
(CFL); see \cite{bgmps,glg,cgl}.
Related local time-stepping techniques
have been developed extensively for multiscale problems arising in computational
fluid and structural dynamics; see \cite{p,wd1,wd2}. The simulation of transport or
diffusive phenomena in the presence of complex geometries requires local mesh
refinement, which imposes the use of finite element or discontinuous Galerkin
methods. An ever larger number of steps is needed if the chosen scheme is explicit,
due to the CFL condition,
or the inversion of large matrices if an implicit scheme
is preferred in order to alleviate the time-step restriction. The local
convergence of these methods has been established in a variety of cases (see
\cite{mg1,mg2,mg3} and references therein).

Splitting methods have been used
in several other applications where the use of subcyling techniques
was indeed crucial.
Let us emphasize at least
the numerical integration of the Community Atmosphere Model
(CAM) and the ENZO code for astrophysics.
The CAM is a global atmosphere model developed
at the US National Center for Atmospheric Research (NCAR) for the weather and
climate research communities; see \cite{taylor2010subcycled}.
ENZO is an open-source code developed in the US
for modeling astrophysical fluid flows
which involves adaptive mesh refinement and subcycling techniques; see
\cite{bryan2014enzo}.
In the references above, the gain obtained by using subcycling techniques
for these large multiscale problems is strongly emphasized.

The applications we have
specifically in mind are related to the recent development of the
``asym\-ptotic-preserving'' schemes in the sense of  \cite{j1999,j2010} for
kinetic equations. Schemes obtained using splitting techniques making use of suitable time scales
{were}  indeed proved
efficient for Boltzmann-type and Fokker-Planck equations by way of micro-macro
decompositions; see \cite{glg,lm,cglv,cgl}. However, if subcycling techniques have
been used in several test-cases, to our knowledge, the asymptotic error
between the exact and numerical longtime solutions has never been precisely
analyzed.

{The long term goal of our work is to be able to study the longtime
convergence (error estimates and rate of convergence) of subcycled schemes and
to compare it to that of non-subcycled schemes.
In this paper, we propose techniques
to formalize the rigorous analysis of the longtime convergence.
}
In particular, these techniques lead to the remarkable and unexpected asymptotic behavior of some Strang
splitting schemes, which approximate better the solution in longtime than
locally predicted, in the spirit of the asymptotic high-order schemes developed
by \cite{abn}.

{We formulate the question of the longtime convergence of numerical methods with or without subcycling in a generic framework of
(partial) differential problems, which would be large enough to include
interesting applications. However, tackling the question in full generality
would probably lead to a too abstract and technical work, so
we develop our analysis on several simple examples (simple systems of ODEs and PDEs) which write as
autonomous Cauchy problems of order one in time
with a fast and a slow component in the vector field. The common feature
of the examples we consider is the existence of a stationary state to which
the solutions converge exponentially fast
in longtime.
}

For every example, we introduce several schemes, with and without subcycling,
we perform numerical experiments on the longtime behavior of the proposed
schemes, and we provide the reader with a mathematical analysis
of the numerical results.
This paper is organized as follows. In Section \ref{sec:fw}, we introduce the general
differential framework (ODEs and PDEs systems) together with the numerical splitting methods
with or without subcycling under consideration. We introduce the concepts of asymptotic error and asymptotic order. Also, we express the local order of a splitting scheme, with or without subcycling, as a function of the order of the underlying schemes and of the order of the splitting method. This result is related to the previous work by \cite{CS2008}.
We then perform our analysis on several toy-models 
in the remaining sections.
Sections \ref{sec:linearanalysis} and \ref{sec:nonlinearanalysis}
are devoted to two different
examples (one linear in Section \ref{sec:linearanalysis}  and one nonqlinear in Section \ref{sec:nonlinearanalysis}) of differential systems with two different time scales. The choice of the examples is 
strongly inspired by the analysis of the Dahlquist test equation
when studying the asymptotic stability of schemes for stiff ODEs (see \cite{hw}) and
of the analysis led by \cite{temam}.
Both systems have exact and explicit solutions so one can do any computations
and estimates involving the exact flows.
We prove properties
about the asymptotic orders of the schemes for the linear example (see
Propositions~\ref{prop:LiesansCsomos}, \ref{prop:CsomossansLie}
and \ref{prop:Strang}) which are illustrated
by several numerical experiments in the nonlinear case in Section \ref{sec:nonlinearanalysis}.
We comment on the differences between schemes with and without subcycling.
In Section \ref{sec:reactiondiffusion}, we perform the same kind of analysis
for a 1D linear coupled reaction-diffusion
system.
For this problem, the boundary conditions play a crucial role in the
existence of attractive equilibrium states. We focus on two cases of
boundary conditions (homogeneous and inhomogeneous Dirichlet conditions).
For homogeneous boundary conditions, we introduce a subcycled Lie-splitting
scheme, we address the question of its rate of convergence towards
the equilibrium state (see Theorem~\ref{decroissancenum})
and we compare this rate to that of the exact solution
(see Theorem~\ref{th:SolEx}).
For inhomogeneous Dirichlet boundary conditions, we compare several
splitting schemes with and without subcycling and we address
the question of the asymptotic error which depends on both the time
and space discretization parameters.
For the subcycled Lie-splitting scheme, we prove that the asymptotic
equilibrium state of the scheme is a uniform-in-$\dt$ second order
${\rm L}^2$-approximation of the exact asymptotic equilibrium state under
a CFL-like condition (see Theorem~\ref{Th:ApproxLieDirInHomo}).
We illustrate numerically the asymptotic behavior of the Strang
schemes and the weighted splitting schemes introduced by \cite{Csomos05}.

\section{General framework and definition of the asymptotic error}\label{sec:fw}

\subsection{General framework}
\label{subsec:GeneralFramework}
The framework which is under scrutiny in this paper is the study
of numerical approximations of multiscale systems of ODEs, and, more generally,
of PDEs.

We are interested in globally well-posed Cauchy problems that write
\begin{equation}\label{eq:ms}
\begin{cases}
\dfrac{\mathrm d}{{\mathrm d}t}W(t)=\mathsf{f}(W(t)),\quad t>0\\
W(0)=W^0,
\end{cases}
\end{equation}
where $\mathsf{f}:D(\mathsf{f})\subset X\rightarrow X$ is not necessarily
linear, $X$ is a Banach space and $W^0\in D(\mathsf{f})$.
We assume that the solution to \eqref{eq:ms} is given by a semi-group.
Fairly general sufficient conditions to ensure this property can be found
for example in \cite{CL71}.
We assume that there exists an asymptotically 
stable state $W^\infty_{\rm ex}\in X$ to which $W(t)$ converges as $t$
goes to infinity. In addition, we suppose that 
the system has a multiscale property:  the vector field can be split
into a fast and a slow part,
once the system is recast in a dimensionless form. We rewrite \eqref{eq:ms}  as
\begin{equation}\label{eq:mss}
\begin{cases}
\dfrac{\mathrm d}{{\mathrm d}t}W(t)=\dfrac{T_{\rm obs}}{T_{s}}\mathsf{f}_{s}(W(t))+\dfrac{T_{\rm obs}}{T_{f}}\mathsf{f}_{f}(W(t)),\quad t>0\\
W(0)=W^0,
\end{cases}
\end{equation}
where $T_{\rm obs}>0$ is an observation time, $T_{s}$ (resp. $T_{f}$) is the characteristic time of the slow (resp. fast) phenomenon and $\mathsf{f}_{s}$ (resp. $\mathsf{f}_{f}$) is the vector field corresponding to the slow (resp. fast) phenomenon, and $T_{s}$ is very large compared to $T_{f}$: $T_{s}/T_{f}=N>>1$.

\subsection{The concept of asymptotic error}
\label{sec:conceptasympterror}

Let $\dt>0$ be a fixed time-step and denote by $G(\dt):X\rightarrow X$ a numerical scheme for \eqref{eq:ms}, that provides
a numerical solution $(W^n)_{n\geq0}$ defined as $W^{n+1}=G(\dt)[W^n]$ for all $n\geq0$.
Assuming that the numerical scheme $G(\dt)$ has an asymptotically stable state, we define
\begin{equation}
W_{\rm num}^\infty=\lim_{n\rightarrow\infty}W^n.\end{equation}
Of course, $W_{\rm num}^\infty$ depends on $\dt$. 
\begin{definition}\label{def:monique}
We define the asymptotic error 
of the scheme $G(\dt)$ as
\begin{equation*}
\varepsilon^{\rm as} = W^\infty_{\rm num}-W^\infty_{\rm ex}. \end{equation*}
We say that
the asymptotic order (A-order) is at least $p\in\N^\star$ if
when $\dt$ tends to $0$, we have
\begin{equation*}
   \varepsilon^{\rm as}={\cal O}(\dt^p).
\end{equation*}
As usual, the A-order is the supremum of the set of
such $p$.
\end{definition}

\subsection{Splitting methods with and without subcycling}

We are interested in solving numerically problems of the form \eqref{eq:mss}
using splitting methods adapted to the slow/fast decomposition of the vector
field. More precisely,
we aim at studying the asymptotic error of splitting 
methods involving subcycling. In this context, using subcycling consists
in using a splitting method with different time-steps for the slow and fast
components. Taking $T_{\rm obs}=T_s$ in System \eqref{eq:mss}, we obtain
\begin{equation}
\label{eq:introsplit}
\begin{cases}
\dfrac{\mathrm d}{{\mathrm d}t}W(t)=\mathsf{f}_{s}(W(t))+N\mathsf{f}_{f}(W(t)),\quad t>0\\
W(0)=W^0.
\end{cases}
\end{equation}
Let us denote by $\Phi_f(\dt)$ (resp. $\Phi_s(\dt)$)
an approximation of the exact flow $\varphi_f(\dt)$ (resp. $\varphi_s(\dt)$) of
\begin{equation}
\label{eq:equationssplitees}
  \dfrac{\mathrm d}{{\mathrm d}t}W(t)=N\mathsf{f}_{f}(W(t))
  \qquad \left(\text{resp.}\ 
  \dfrac{\mathrm d}{{\mathrm d}t}W(t)=\mathsf{f}_{s}(W(t)) \right).
\end{equation}
In particular, we assume that both equations \eqref{eq:equationssplitees}
are solved by semi-groups with compatible domains, just as we did
for the global problem \eqref{eq:ms} in Section \ref{subsec:GeneralFramework}.
A classical splitting (or composition) method consists in setting
\begin{equation}
\label{eq:composition}
    \Phi_{\rm c} (\dt) = \Pi_{i=1}^k (\Phi_{s}(b_i\dt)\circ \Phi_{f}(a_i \dt )),
  \end{equation}
for some real coefficients $a_1,\dots,a_k,b_1,\dots,b_k\in\R$
and considering $\Phi_{\rm c}(\dt)$ as an approximation of the exact flow
$\varphi_{{\rm ex}}(\dt)$
of \eqref{eq:introsplit} on a time interval of size $\dt$.
A splitting method with subcycling
consists in taking
\begin{equation}
\label{eq:methodesplittingsc}
    \Phi_{\rm sc} (\dt) = \Pi_{i=1}^k \left(\Phi_{s}(b_i\dt)\circ \left(\Phi_{f}(a_i \dt/N)\right)^N\right),
\end{equation}
as an approximation of the same exact flow.

Since the analysis of the asymptotic error of splitting methods
with or without subcycling in such a general framework is
out of reach for the authors, we rather perform our analysis
on several examples.
These examples are linear ODEs (Section \ref{sec:linearanalysis}),
nonlinear ODEs (Section \ref{sec:nonlinearanalysis}) and
linear PDEs (Section \ref{sec:reactiondiffusion}) that
have the multiscale property detailed above.
Moreover, they allow us to perform an analysis in full detail.

\subsection{Local order of splitting methods with and without
subcycling}
We prove a somehow classical result
expressing
the local order of a splitting
scheme (with or without subcycling) as a function of the order of the
underlying schemes and the order of the splitting method in the context
of ODEs ({\it i.e. } $X=\R^d$ for some $d\in\N^\star$).

\begin{theorem}
\label{th:ordrenonlineaire}
Assume $X=\R^d$ for some $d\in\N^\star$.
Let us consider a differential system of the form \eqref{eq:introsplit}
with $D(\mathsf{f})=X$.
With the notations introduced above, we assume that
$\Phi_{f}(\dt)$ and $\Phi_{s}(\dt)$
are numerical methods of respective orders $p_f$ and $p_s$.
Moreover, we assume that a splitting method $\Phi_{\rm c}(\dt)$
is defined for some $a_1,\dots,a_n,b_1,\dots,b_n\in\R$
by the formula \eqref{eq:composition}
so that this method with the exact flows ($\varphi_f(\dt)$ and $\varphi_s(\dt)$)
has order $p_{\rm ex}$.
Then the order of the method $\Phi_{\rm c}(\dt)$ is at least
${\rm min}(p_f,p_s,p_{\rm ex})$, and so is the order of the method with subcycling
$\Phi_{\rm sc}(\dt)$ defined by Formula \eqref{eq:methodesplittingsc}.
\end{theorem}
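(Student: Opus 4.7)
My plan is to prove the local order bound by inserting an intermediate ``ideal'' composition
\[
  \tilde\Phi_{\rm c}(\dt) = \prod_{i=1}^k \bigl(\varphi_s(b_i\dt)\circ \varphi_f(a_i\dt)\bigr),
\]
i.e.\ the same composition pattern as $\Phi_{\rm c}(\dt)$ but using the exact sub-flows. Writing
\[
  \Phi_{\rm c}(\dt)[W^0] - \varphi_{\rm ex}(\dt)[W^0]
  = \bigl(\Phi_{\rm c}(\dt)[W^0] - \tilde\Phi_{\rm c}(\dt)[W^0]\bigr)
  + \bigl(\tilde\Phi_{\rm c}(\dt)[W^0] - \varphi_{\rm ex}(\dt)[W^0]\bigr),
\]
the second term is $\mathcal{O}(\dt^{p_{\rm ex}+1})$ by the assumption that the composition with the exact flows has order $p_{\rm ex}$. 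Hence everything reduces to bounding the first term by $\mathcal{O}(\dt^{\min(p_f,p_s)+1})$.

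For this, I would use a telescoping argument: starting from $\tilde\Phi_{\rm c}(\dt)$, replace the $2k$ exact sub-flows by their numerical counterparts one at a time, from right to left. At each replacement, the definition of order $p_f$ (resp.\ $p_s$) of $\Phi_f$ (resp.\ $\Phi_s$) yields a one-step error of size $\mathcal{O}(\dt^{p_f+1})$ (resp.\ $\mathcal{O}(\dt^{p_s+1})$) on the working bounded set, locally and uniformly in $W^0$ in a neighborhood of the trajectory. This pointwise error is then pushed through the remaining left-hand factors using their local Lipschitz constants; since each of those factors is either an exact flow over a time $\mathcal{O}(\dt)$ or a numerical flow converging to the identity as $\dt\to 0$, these Lipschitz constants are uniformly bounded (by $1+\mathcal{O}(\dt)$) on the relevant compact set, in finite dimension. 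Summing $2k$ such contributions gives the desired $\mathcal{O}(\dt^{\min(p_f,p_s)+1})$ bound, and combining with the $\mathcal{O}(\dt^{p_{\rm ex}+1})$ bound yields the local order $\min(p_f,p_s,p_{\rm ex})$ for $\Phi_{\rm c}(\dt)$.

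For the subcycled scheme $\Phi_{\rm sc}(\dt)$, the only change is that the exact flow $\varphi_f(a_i\dt)$ is now approximated by the iterated scheme $(\Phi_f(a_i\dt/N))^N$ rather than by $\Phi_f(a_i\dt)$. The key observation is that, for $N$ fixed, this iterated scheme is itself a one-step numerical method for $\varphi_f(a_i\dt)$ of order $p_f$: $N$ steps of size $a_i\dt/N$ with per-step local error $\mathcal{O}((\dt/N)^{p_f+1})$ accumulate (via the standard Lady Windermere's fan argument for the flow of $N\mathsf{f}_f$ on the bounded set) to a global error $\mathcal{O}(\dt^{p_f+1}/N^{p_f})$ over the interval $[0,a_i\dt]$, which is $\mathcal{O}(\dt^{p_f+1})$. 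Thus the telescoping estimate above applies verbatim with this compound sub-flow in place of $\Phi_f(a_i\dt)$, giving the same order $\min(p_f,p_s,p_{\rm ex})$ for $\Phi_{\rm sc}(\dt)$. The main obstacle I expect is the bookkeeping of the Lipschitz constants in the telescoping step, and in particular the verification that they remain bounded uniformly in $\dt$ on the trajectory; this is routine in finite dimension because the smoothness of $\mathsf{f}_f$ and $\mathsf{f}_s$ underlying the order assumptions yields $C^1$ dependence of each factor on its input, but it would be the delicate step to generalize to the PDE setting $X$ infinite-dimensional, which is why the authors restrict to $X=\R^d$ here.
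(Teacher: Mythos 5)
Your proposal is correct and follows essentially the same route as the paper: you insert the composition built from the exact sub-flows, bound its distance to $\varphi_{\rm ex}(\dt)$ by the order-$p_{\rm ex}$ assumption, and control the remaining difference by replacing sub-flows one at a time with Lipschitz bookkeeping, while the subcycled case is handled exactly as in the paper by showing $(\Phi_f(a_i\dt/N))^N=\varphi_f(a_i\dt)+\mathcal{O}(\dt^{p_f+1})$ via accumulation of the $N$ per-step errors. The paper's proof is simply a terser version of this argument, stating the $\mathcal{O}$-identities directly where you spell out the telescoping and the uniform Lipschitz constants.
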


\begin{proof}
  Since the methods $\Phi_{s}(\dt)$ and $\Phi_{f}(\dt/N)$ have orders $p_s$
and $p_f$ respectively, we may write, when $\dt \rightarrow 0$,
  \begin{equation*}
    \Phi_{s}(\dt) = \varphi_{s}(\dt) + {\mathcal O} (\dt^{p_s+1}) \quad\mbox{and}\quad
    \Phi_{f}(\dt/N) = \varphi_{f}(\dt/N) + {\mathcal O} (\dt^{p_f+1}).
  \end{equation*}
  The smoothness of the propagators implies that for all $j\in\N^\star$,
  \begin{equation*}
    \Phi^{j}_{f}(\dt/N) = \varphi^{j}_{f}(\dt/N) + {\mathcal O} (\dt^{p_f+1}),
  \end{equation*}
  where the constant in the Landau symbol depends on $j$. In particular,
for $j=N$, using the semi-group property of the exact flow, we have
  \begin{equation*}
    \Phi^N_{f}(\dt/N) = \varphi_{f}(\dt) + {\mathcal O} (\dt^{p_f+1}).
  \end{equation*}
  This implies
  \begin{align*}
    \Phi_{\rm sc}(\dt) = \Pi_{i=1}^n (\Phi_{s}(b_i\dt)\circ (\Phi_{f}(a_i\dt/N))^N) & =  
    \Pi_{i=1}^n 
    (\varphi_{s}(b_i\dt) + {\mathcal O} (\dt^{p_s+1}))\circ
    (\varphi_{f}(a_i\dt) + {\mathcal O} (\dt^{p_f+1}))
    \\
    & =  \Pi_{i=1}^n 
    (\varphi_{s}(b_i\dt)\circ \varphi_{f}(a_i\dt))
    + {\mathcal O} (\dt^{{\rm min}(p_f,p_s)+1}) \\
    & =  \varphi_{{\rm ex}}(\dt) + {\mathcal O}(\dt^{{\rm min}(p_f,p_s,p_{\rm ex})+1}),
  \end{align*}
since the splitting method $\Phi_c(\dt)$ is assumed to have order
$p_{\rm ex}$ when used with
the exact flows. This proves the result for $\Phi_{\rm sc}(\dt)$.
The proof for $\Phi_{\rm c}(\dt)$ is even simpler since there
is no need to compute the internal composition step.
\end{proof}

\section{Full analysis of the asymptotic error of splitting schemes
applied to a linear toy-model}
\label{sec:linearanalysis}
As a first example of system of the form \eqref{eq:introsplit},
we consider in this section the following example:
\begin{equation}
  \label{eq:cas-simple-lineaire}
  \begin{cases} 
    u'=-Nc(u-v)\\
v'=c(u-v),
  \end{cases}
\end{equation}
where $c> 0$ and $N\in\mathbb{N}$, with $N$ being 
large: it is the stiffness parameter in the problem.
From the dimensional viewpoint, $c$ is the inverse of a
characteristic time.
With the notations of Section \ref{sec:Intro}, we have
\begin{equation*}
X=\R^2,\quad W=\left(\begin{matrix}u\\ v\end{matrix}\right),\quad
{\mathsf f}_{\rm s} \left(\begin{matrix}u\\ v\end{matrix}\right) =
\left(\begin{matrix}0\\ c(u-v)\end{matrix}\right),\quad
\text{and}\quad
{\mathsf f}_{\rm f} \left(\begin{matrix}u\\ v\end{matrix}\right) =
\left(\begin{matrix} -c(u-v)\\ 0\end{matrix}\right).
\end{equation*}

\noindent
To compute numerical solutions of the linear system~\eqref{eq:cas-simple-lineaire},
we consider splitting schemes between the fast ({\it i.e.} first)
equation of the system and the slow ({\it i.e.} second) equation.
{Since the equilibrium points of the linear system
\eqref{eq:cas-simple-lineaire} are located on the line of equation
$u=v$, we require that the matrices $M_f(\lambda_f)$ and $M_s(\lambda_s)$ that
will constitute the fast and slow schemes 
are such that $M_f(\lambda_f)(1,1)^{\t}=(1,1)^{\t}$ and
$M_s(\lambda_s)(1,1)^{\t}=(1,1)^{\t}$ so that
these matrices preserve the asymptotics, and so do all their products.}
Therefore the  numerical schemes always lead to a product of matrices
of the form
\begin{equation}
\label{formedesmatrices}  
    M_f(\lambda_f):=\begin{pmatrix}
      \lambda_f  & 1-\lambda_f \\ 
       0  & 1
    \end{pmatrix}\ 
    \text{and }
   \  M_s(\lambda_s):=\begin{pmatrix}
      1  & 0 \\
       1-\lambda_s  & \lambda_s
    \end{pmatrix},
\end{equation}
where $s$ (resp. $f$) stands for ``slow'' (resp. ``fast'').
The parameters $\lambda_f$ and $\lambda_s$ are functions of the time-step $\dt$
with values in $(0,1)$
that depend on the choice of integrators (exact flow or $\theta$-scheme)
for the slow and fast equations.
The composition of the matrices
depends on the type of splitting one wants to use ({\it e.g. }Lie or Strang
type). For example, for a Lie-type splitting without subcycling where
the solutions to the fast and slow equations 
are approached by a forward Euler scheme of time-step $\dt$,
$\lambda_f=1-Nc\dt$ and $\lambda_s=1-c\dt$ and the matrix of the numerical scheme reads
\begin{equation*}
G(\dt)=M_s(\lambda_s)M_f(\lambda_f)=\begin{pmatrix}\lambda_f&1-\lambda_f\\\lambda_f(1-\lambda_s)&1-\lambda_f(1-\lambda_s)\end{pmatrix}.
\end{equation*}

\subsection{The exact solutions of the linear system \eqref{eq:cas-simple-lineaire}}
Let us compute the exact solution of \eqref{eq:cas-simple-lineaire}.
We consider the matrix
\begin{equation*}
  A=\begin{pmatrix} 
      -N  & N \\
       1  & - 1 
    \end{pmatrix}.
\end{equation*}
It is diagonalizable and its eigenvalues and associated spectral projectors are
\begin{equation*}
  \left(-(N+1),\,\,P_{\rm ex}=-A/(N+1)\right)\mbox{and }\left(0,\,\,Q_{\rm ex}=(1,1)^{\t}\,(1,N)/(N+1)\right).
 \end{equation*}
So the exact solution of system \eqref{eq:cas-simple-lineaire} is,
for all $t \in \R$,
\begin{equation*}
W(t):=   (u(t),v(t))^{\t}
    =\left({\rm e}^{-(N+1) ct}P_{\rm ex}+Q_{\rm ex}\right)
        (u^0,v^0)^{\t},
  \end{equation*}
\noindent for the initial values $u^0\in\R$ and $v^0\in\R$ at time $t=0$.
In particular, we note that all the solutions converge to the equilibrium state
$Q_{\rm ex}(u^0,v^0)^{\t}$ when $t$ tends to infinity.
In the following, we fix $T>0$ and define
\begin{equation}
F(T)={\rm e}^{cTA}={\rm e}^{-(N+1) cT}P_{\rm ex}+Q_{\rm ex},\label{flot_exact}
\end{equation}
the matrix of the exact flow at time $T$
of the system \eqref{eq:cas-simple-lineaire},
the eigenvalues of which are ${\rm e}^{-(N+1) cT}$ and $1$. 

\subsection{General properties of splitting schemes
for the linear system \eqref{eq:cas-simple-lineaire}}
\label{subsec:GenPropSplit}
Let $G(\dt)$ be defined for $\dt\in \mathcal{I}_N$ as the $2$-by-$2$ matrix
of any linear numerical flow that is a product of matrices of the form
\eqref{formedesmatrices}, where $\mathcal{I}_N$ is the intersection,
that may depend on $N$, of the stability intervals of the involved
schemes (see examples in Section \ref{LS}).
In the following, for all $n\in\N$, we will denote by
$$W^n:=(u^n,v^n)^{\t}=(G(\dt))^n W^0$$
the numerical solution at time $n\dt$ starting from the initial datum $W^0=(u^0,v^0)^{\t}$.
\begin{lemma}\label{boris}
For all $\dt\in \mathcal{I}_N$, the matrix $G(\dt) $ is diagonalizable, with two distinct real eigenvalues.
One of these eigenvalues is $1$ and the other one lies in $(0,1)$.
The vector $(1,1)^{\t}$ is an eigenvector of $G(\dt)$ associated to the
eigenvalue $1$. Hence the matrix $G(\dt)$ reads
\begin{equation}
\label{formeGdeltat}
  G(\dt) =
  \begin{pmatrix}
    1-\alpha(\dt) & \alpha (\dt) \\
    \beta(\dt) & 1 - \beta(\dt) \\
  \end{pmatrix},
\end{equation}
for two real-valued functions $\alpha$ and $\beta$. 
Moreover, the spectral decomposition of the matrix $G(\dt)$ reads
\begin{equation}
\label{spectralGdeltat}
  G(\dt) = \mu(\dt) P (\dt) + Q(\dt),
\end{equation}
where $P(\dt)$ is the matrix of the spectral projector of $G(\dt)$
associated
to the eigenvalue $\mu(\dt)=1-\alpha(\dt) - \beta(\dt)$
and $Q(\dt)$ is that associated to the eigenvalue 1. In particular,
\begin{equation}
\label{Qdeltat}
Q(\dt)=(1,1)^{\t}\,(\beta(\dt),\alpha(\dt))/(\alpha(\dt)+\beta(\dt)).
\end{equation}
\end{lemma}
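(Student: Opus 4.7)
The plan is to exploit the fact that each building block $M_f(\lambda_f)$ and $M_s(\lambda_s)$ shares two key properties: both fix the vector $(1,1)^{\t}$ (by the design constraint discussed in Section~\ref{subsec:GenPropSplit}), and both are upper/lower triangular, hence their determinants are simply $\lambda_f$ and $\lambda_s$ respectively. Since $\lambda_f$ and $\lambda_s$ take values in $(0,1)$ by assumption, the determinant of each factor lies in $(0,1)$.

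First I would argue that $G(\dt)$ inherits both properties from its factors. Because $(1,1)^{\t}$ is a fixed point of each $M_f(\lambda_f)$ and each $M_s(\lambda_s)$, it is also a fixed point of any product of such matrices; so $1$ is an eigenvalue of $G(\dt)$ with associated eigenvector $(1,1)^{\t}$. By multiplicativity of the determinant, $\det G(\dt)$ is a product of factors in $(0,1)$, hence lies in $(0,1)$. Since the product of the two eigenvalues of the $2\times2$ matrix $G(\dt)$ equals $\det G(\dt)$, the second eigenvalue must be $\mu(\dt):=\det G(\dt)\in(0,1)$, which proves that the two eigenvalues are real, distinct and that $\mu(\dt)\in(0,1)$. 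Diagonalizability is then immediate from the distinctness of the eigenvalues.

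Next I would deduce the form~\eqref{formeGdeltat}. The identity $G(\dt)\,(1,1)^{\t}=(1,1)^{\t}$ is exactly the statement that the rows of $G(\dt)$ sum to one, so writing the two off-diagonal entries as $\alpha(\dt)$ (upper right) and $\beta(\dt)$ (lower left) forces the diagonal entries to be $1-\alpha(\dt)$ and $1-\beta(\dt)$. The second eigenvalue can then be read off the trace: $\mu(\dt)={\rm tr}\,G(\dt)-1=1-\alpha(\dt)-\beta(\dt)$.

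Finally, for the spectral decomposition~\eqref{spectralGdeltat} and~\eqref{Qdeltat}, I would combine the two relations $P(\dt)+Q(\dt)=I$ and $\mu(\dt)P(\dt)+Q(\dt)=G(\dt)$ to obtain $Q(\dt)=(G(\dt)-\mu(\dt)I)/(1-\mu(\dt))$. Substituting $1-\mu(\dt)=\alpha(\dt)+\beta(\dt)$ and the explicit form~\eqref{formeGdeltat} of $G(\dt)$ yields
\begin{equation*}
  Q(\dt)=\frac{1}{\alpha(\dt)+\beta(\dt)}\begin{pmatrix}\beta(\dt)&\alpha(\dt)\\ \beta(\dt)&\alpha(\dt)\end{pmatrix}=\frac{(1,1)^{\t}\,(\beta(\dt),\alpha(\dt))}{\alpha(\dt)+\beta(\dt)},
\end{equation*}
as claimed. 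No step is truly an obstacle here; the only point deserving some care is to make sure that, over the stability interval $\mathcal{I}_N$, the parameters $\lambda_f$ and $\lambda_s$ appearing in the factors of $G(\dt)$ do lie strictly in $(0,1)$, so that $\mu(\dt)=\det G(\dt)$ is strictly positive (and the formula for $Q(\dt)$ is well-defined because $\alpha(\dt)+\beta(\dt)=1-\mu(\dt)>0$).
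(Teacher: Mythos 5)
Your proof is correct and follows essentially the same route as the paper's: both arguments rest on the facts that every factor $M_f(\lambda_f)$, $M_s(\lambda_s)$ fixes $(1,1)^{\t}$ (giving the eigenvalue $1$ and the row-sum form \eqref{formeGdeltat}) and has determinant in $(0,1)$ (giving $\mu(\dt)={\rm Tr}(G(\dt))-1=\det G(\dt)\in(0,1)$ and hence diagonalizability). The only difference is that you spell out the derivation of $Q(\dt)=(G(\dt)-\mu(\dt)I)/(1-\mu(\dt))$, which the paper asserts without computation.
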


\begin{proof}
  Since all the matrices $M_s$ and $M_f$ have $(1,1)^{\t}$ for eigenvector associated
with $1$, so does any (finite) product of such matrices and this explains the form of
the matrix $G(\dt)$ in \eqref{formeGdeltat}. Moreover, since all the matrices
$M_s$ and $M_f$  also have their other real eigenvalue in $(0,1)$, the determinant
of a product of such matrices is in $(0,1)$. Hence for all $\dt\in \mathcal{I}_N$, $G(\dt)$
is diagonalizable with eigenvalues $1$ and
$ \mu(\dt) = {\rm Tr}(G(\dt))-1 = {\rm det} (G(\dt))\in(0,1)$.
\end{proof}

\begin{remark}\label{Galphabeta}
  We will sometimes use in the following the notation $G[\alpha,\beta]$ in
  reference to \eqref{formeGdeltat}.
\end{remark}
\begin{remark}
  The functions $\alpha$ and $\beta$ are polynomials of functions of type
$\lambda_f$ and $\lambda_s$ which depend on the time integrators for the
split equations (see the form of the matrices $M_f$ and $M_s$ in
\eqref{formedesmatrices}; see also examples in page \pageref{tableau4schemas}).
\end{remark}

With Lemma \ref{boris}, we can show that the exact and numerical propagators
share an interesting property:
\begin{proposition}\label{prop:projection}
  For any fixed $\dt>0$, $(F(n\dt))$ projects the vector
  $(u^0,v^0)^{\t}$ onto the line of equation $u=v$ when $n$ tends to infinity and
  so does $(G(\dt))^n$ for all $\dt\in \mathcal{I}_N$.
\end{proposition}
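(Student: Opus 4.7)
The plan is to read off the conclusion from the spectral decompositions already in hand: \eqref{flot_exact} for the exact flow and \eqref{spectralGdeltat} for the numerical flow. In each case the propagator is the sum of a projector onto the line $u=v$ and a second term whose amplification factor per step has modulus strictly less than one; raising to the $n$-th power then kills the second term.

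For the exact propagator, I would just take $T = n\dt$ in \eqref{flot_exact}, yielding
\begin{equation*}
F(n\dt) = \mathrm{e}^{-(N+1)cn\dt} P_{\rm ex} + Q_{\rm ex}.
\end{equation*}
Since $N+1>0$, $c>0$ and $\dt>0$, the prefactor tends to $0$ as $n\to\infty$, so $F(n\dt)\to Q_{\rm ex}$. By its explicit expression $Q_{\rm ex}=(1,1)^{\t}(1,N)/(N+1)$, the image of $Q_{\rm ex}$ is the line $\{u=v\}$, so the claim holds in the exact case.

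For the numerical propagator, I would start from the spectral decomposition \eqref{spectralGdeltat}, namely $G(\dt)=\mu(\dt) P(\dt) + Q(\dt)$, and observe that $P(\dt)$ and $Q(\dt)$ are the spectral projectors associated to the two distinct eigenvalues $\mu(\dt)$ and $1$ guaranteed by Lemma~\ref{boris}. Standard spectral calculus in $\R^2$ then gives $P(\dt)^2 = P(\dt)$, $Q(\dt)^2 = Q(\dt)$, $P(\dt)Q(\dt) = Q(\dt)P(\dt) = 0$ and $P(\dt) + Q(\dt) = I$, so by induction
\begin{equation*}
(G(\dt))^n = \mu(\dt)^n P(\dt) + Q(\dt).
\end{equation*}
Lemma~\ref{boris} ensures $\mu(\dt)\in(0,1)$ for $\dt\in\mathcal{I}_N$, hence $\mu(\dt)^n\to 0$ and $(G(\dt))^n \to Q(\dt)$. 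Finally, formula \eqref{Qdeltat} shows that $Q(\dt)$ has rank one with range spanned by $(1,1)^{\t}$, which is exactly the line of equation $u=v$.

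There is no real obstacle here: the proposition is essentially a corollary of Lemma~\ref{boris} and of the eigenstructure computed in Section~3.1. The only mild point to check carefully is the idempotence and orthogonality of the spectral projectors $P(\dt)$ and $Q(\dt)$, but this is immediate from the diagonalizability of $G(\dt)$ and the fact that its two eigenvalues are distinct for every $\dt\in\mathcal{I}_N$.
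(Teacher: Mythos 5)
Your proof is correct and follows exactly the paper's argument: the exact case is read off from the decomposition \eqref{flot_exact} with $T=n\dt$, and the numerical case from the spectral decomposition of Lemma~\ref{boris}, giving $(G(\dt))^n=\mu(\dt)^nP(\dt)+Q(\dt)$ with $\mu(\dt)\in(0,1)$. The only difference is that you spell out the idempotence and orthogonality of the spectral projectors, which the paper takes for granted.
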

\begin{proof}
Recall that for all $n\in\N$, $F(n\dt)=F(\dt)^n$.
The projection property for $F(n\dt)$ as $n\rightarrow+\infty$ relies on
the decomposition \eqref{flot_exact}. Using Lemma \ref{boris}, we get
for all $n\in\N$, $(G(\dt))^n = (\mu(\dt))^n P (\dt) + Q(\dt),$
with $|\mu(\dt)|<1$ and the result follows.
\end{proof}

Following the notations of Section \ref{sec:fw},
we denote the numerical and exact limits in time by
\begin{equation*}
  (u^\infty_{\rm num},v^\infty_{\rm num})^{\t} = \lim_{n\to +\infty} (G(\dt))^n (u^0,v^0)^{\t}
  \quad \text{and} \quad
  (u^\infty_{\rm ex},v^\infty_{\rm ex})^{\t} = \lim_{n\to +\infty} (F(\dt))^n (u^0,v^0)^{\t}.
\end{equation*}
Recall that the numerical limit $(u^\infty_{\rm num},v^\infty_{\rm num})^{\t}$
actually depends on $\dt$. In this context, we consider the asymptotic error
$\varepsilon^{\rm as} := (u^\infty_{\rm num},v^\infty_{\rm num})^{\t}
- (u^\infty_{\rm ex},v^\infty_{\rm ex})^{\t}$ and are interested in the asymptotic
order of the method $G(\dt)$ (see Section \ref{sec:conceptasympterror}).
Note that, for the linear system \eqref{eq:cas-simple-lineaire},
$\varepsilon^{\rm as}=(Q(\dt)-Q_{\rm
  ex})(u^0,v^0)^{\t}$. We define  $S(\dt)$ as the ratio $\alpha(\dt)/\beta(\dt)$.
Since for all $t\in\R$, $u(t)+Nv(t)=u(0)+Nv(0)$, and for all
$n\geq 0$, ${u^n+S(\dt)
  v^n = u^0 + S(\dt) v^0}$, $\varepsilon^{\rm as}$ can be measured in terms of the difference of the slopes of the two straight lines
$u+Nv=u^0+Nv^0$ and $u+S(\dt)v=u^0+S(\dt)v^0$
(see Figure \ref{fig:plan_uv}). 
More precisely,
\begin{equation}
\label{eq:erreurasympt}
  \|\varepsilon^{\rm as}\|_2 = \sqrt{2}
  \frac{N}{N+1}
  \frac{|u^0-v^0|}{(S(\dt)+1)}\,\frac{|S(\dt)-N|}{N}.
\end{equation}
Let us define the relative asymptotic error of the method $G(\dt)$ applied
to the linear problem \eqref{eq:cas-simple-lineaire}.
\begin{definition} 
  The relative asymptotic error is defined as the scaled difference
  \begin{equation*} 
    \varepsilon^\infty:=\dfrac{|S(\dt)-N|}{N}.
  \end{equation*}
\end{definition}
For the linear system~\eqref{eq:cas-simple-lineaire}, the asymptotic order is
studied in the following by means of the relative
asymptotic error $\varepsilon^\infty$.
As we shall see in the proof of Theorem \ref{th:ordreasymptotiquelineaire},
for consistency reasons, $S(\dt)\rightarrow N$ when $\dt\rightarrow 0$, so
in view of \eqref{eq:erreurasympt}, 
$\varepsilon^{\rm as}$ and $\varepsilon^\infty$ have the same order in $\dt$.
\\

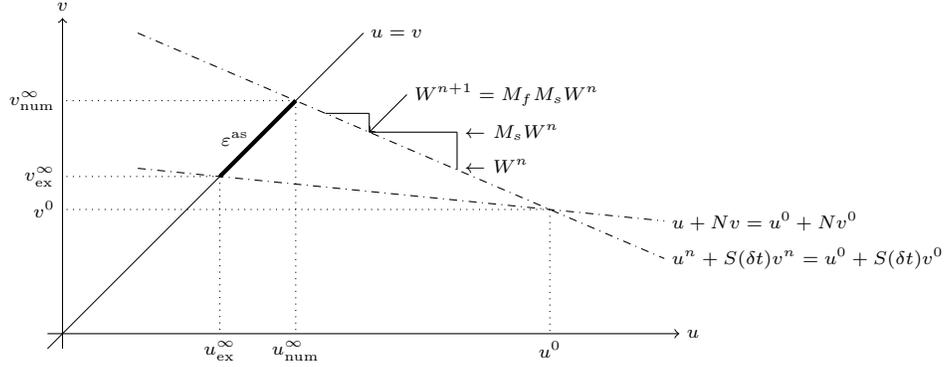
\begin{figure}
\centering
{\scriptsize
 \begin{tikzpicture}[domain=0:8]
     \path[name path=absc,draw,->] (-0.2,0) -- (8.2,0) node(uline)[right] {$u$};
     \path[name path=ord,draw,->] (0,-0.2) -- (0,4.2) node(vline)[above] {$v$};
     \path[name path=id,draw,ultra thin] (-0.2,-0.2) -- (4,4)         node[right] {$u=v$};
 \path[name path=exact,draw,dashdotted] (1,2.2) -- (8,1.5) node [right]{$u+Nv=u^0+Nv^0$} ;
  \path[name path=num,draw,dashdotted] (1,4) -- (8,1) node
  [right]{$u^n+S(\dt)v^n=u^0+S(\dt)v^0$} ; 
  \path [name intersections={of = exact and num}];
   \coordinate (I)  at (intersection-1);
 \draw[dotted] (I) -- (I |- uline) node[below]{$u^0$};
 \draw[dotted] (I) -- (I -| vline) node[left]{$v^0$};
  \path [name intersections={of = exact and id}]; 
   \coordinate (E)  at (intersection-1);
 \draw[dotted] (E) -- (E |- uline) node[below]{$u^\infty_{\rm ex}$};
 \draw[dotted] (E) -- (E -| vline) node[left]{$v^\infty_{\rm ex}$};
 \path [name intersections={of = num and id}];
   \coordinate (N)  at (intersection-1);
 \draw[dotted] (N) -- (N |- uline) node[below]{$u^\infty_{\rm num}$};
 \draw[dotted] (N) -- (N -| vline) node[left]{$v^\infty_{\rm num}$};
 \coordinate (D) at (3.5,0);
 \path[name path=Mn,draw=none]  (D) -- (7.5,5);
  \path [name intersections={of = num and Mn}];
   \coordinate (M)  at (intersection-1);
  \coordinate (M1) at ($(M)+(0,0.5)$);
 \draw (M) -- (M1) node[right]{$\leftarrow$ $M_sW^n$}; 
 \coordinate (Mplus) at ($(M)+(0,0.05)$);
 \node[right] at (Mplus) {$\leftarrow$ $W^{n}$};
 \path[name path=Mn1,draw=none] (M1) -- (M1 -| vline); 
 \path [name intersections={of = num and Mn1}];
   \coordinate (M2)  at (intersection-1);
 \draw (M1) -- (M2);
 \draw[<-] (M2) --  ($(M2)+(0.5,0.5)$)  node[above,right]{$W^{n+1}=M_fM_sW^n$};
  \coordinate (M3) at ($(M2)+(0,0.25)$);
 \draw (M2) -- (M3) node[right]{}; 
 \path[name path=Mn2,draw=none] (M3) -- (M3 -| vline); 
 \path [name intersections={of = num and Mn2}];
   \coordinate (M4)  at (intersection-1);
 \draw (M3) -- (M4);
 \path [name intersections={of = id and exact}];
   \coordinate (Ie)  at (intersection-1);
 \path [name intersections={of = id and num}];
   \coordinate (In)  at (intersection-1);
 \draw[ultra thick] (Ie) -- (In) node[pos=0.5,above,left]{$\varepsilon^{\rm as}$}; 
 \end{tikzpicture} 
}
\caption{Evolution of the exact and numerical solution in the phase space
  $\R_u\times\R_v$. We note $W^n=(u^n,v^n)^{\t}$.}
\label{fig:plan_uv} 
\end{figure}

Our first result is the following link between the final-time classical order
of a splitting method $G(\dt)$ defined as above for the solution
of System \eqref{eq:cas-simple-lineaire} and its A-order.
\begin{theorem}
\label{th:ordreasymptotiquelineaire}
Let $G(\dt)$ be defined for $\dt\in \mathcal{I}_N$, associated with a discretization of \eqref{eq:cas-simple-lineaire}
and assume that it
is a product of matrices of the form \eqref{formedesmatrices}.
If the local order of $G(\dt)$ is at least $p+1$
(so that its global order at least $p$),
then its A-order is at least $p$.
\end{theorem}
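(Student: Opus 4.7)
The plan is to exploit the very special structure of the problem: both $F(\dt)$ and $G(\dt)$ share the eigenvector $(1,1)^{\t}$ associated with eigenvalue $1$, so the asymptotic error reduces entirely to a comparison of the \emph{other} spectral projectors $Q(\dt)$ and $Q_{\rm ex}$. By \eqref{Qdeltat} and the explicit form of $Q_{\rm ex}$, this further reduces to comparing the single scalar $S(\dt)=\alpha(\dt)/\beta(\dt)$ with the integer $N$, because of the bound \eqref{eq:erreurasympt}.

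First I would translate the local-order hypothesis into entrywise estimates. Since $G(\dt)=F(\dt)+\mathcal{O}(\dt^{p+1})$ componentwise, reading off the $(1,2)$ and $(2,1)$ entries and using \eqref{formeGdeltat} yields
\begin{equation*}
\alpha(\dt)=F_{12}(\dt)+\mathcal{O}(\dt^{p+1}),\qquad
\beta(\dt)=F_{21}(\dt)+\mathcal{O}(\dt^{p+1}).
\end{equation*}
Next I would compute $F(\dt)$ explicitly from \eqref{flot_exact}: a direct calculation gives
\begin{equation*}
F_{12}(\dt)=\frac{N\bigl(1-\mathrm{e}^{-(N+1)c\dt}\bigr)}{N+1},\qquad
F_{21}(\dt)=\frac{1-\mathrm{e}^{-(N+1)c\dt}}{N+1},
\end{equation*}
so that the crucial identity $F_{12}(\dt)=N\,F_{21}(\dt)$ holds for every $\dt>0$. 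This is precisely the fact that the exact slope is exactly $N$; the numerical method can only perturb it.

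Then I would conclude by a careful division. Expanding $F_{21}(\dt)=c\dt+\mathcal{O}(\dt^2)$, one sees $\beta(\dt)=c\dt+\mathcal{O}(\dt^2)$, which is in particular bounded below by a positive multiple of $\dt$ for $\dt$ small enough. Writing
\begin{equation*}
S(\dt)-N=\frac{\alpha(\dt)-N\beta(\dt)}{\beta(\dt)}
       =\frac{\bigl(F_{12}(\dt)-NF_{21}(\dt)\bigr)+\mathcal{O}(\dt^{p+1})}{\beta(\dt)}
       =\frac{\mathcal{O}(\dt^{p+1})}{\Theta(\dt)}=\mathcal{O}(\dt^{p}),
\end{equation*}
where the numerator simplifies because $F_{12}(\dt)-NF_{21}(\dt)=0$. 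This gives $\varepsilon^\infty=\mathcal{O}(\dt^p)$, and plugging back into \eqref{eq:erreurasympt} yields $\|\varepsilon^{\rm as}\|_2=\mathcal{O}(\dt^p)$, i.e.\ the A-order is at least $p$.

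The only subtle step is justifying the division by $\beta(\dt)$: I need both that $\beta(\dt)=\Theta(\dt)$ (which follows from consistency of the underlying scheme, or equivalently from reading off $F_{21}(\dt)=c\dt+\mathcal{O}(\dt^2)$ combined with the local-order bound with $p\ge 1$), and that $\alpha(\dt)+\beta(\dt)$ remains bounded away from $0$ so that $Q(\dt)$ in \eqref{Qdeltat} is well defined near $\dt=0$; this follows from the same consistency argument since $\alpha(\dt)=Nc\dt+\mathcal{O}(\dt^2)>0$ for small $\dt$. Everything else is bookkeeping of Landau symbols, and the miracle making the argument work is the exact algebraic identity $F_{12}(\dt)=NF_{21}(\dt)$, which is a direct consequence of the rank-one structure of the limiting projector $Q_{\rm ex}$.
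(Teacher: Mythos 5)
Your proposal is correct and follows essentially the same route as the paper's own proof: read off $\alpha$ and $\beta$ entrywise from the local-order estimate $G(\dt)-F(\dt)=\mathcal{O}(\dt^{p+1})$, identify them with the explicit entries of $F(\dt)$ from \eqref{flot_exact}, and divide to get $S(\dt)=N+\mathcal{O}(\dt^p)$. The only difference is that you make explicit the lower bound $\beta(\dt)=\Theta(\dt)$ justifying the division, a step the paper's proof leaves implicit; this is a welcome clarification but not a different argument.
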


\begin{proof}
Since the numerical flow $G(\dt)$ has local order $p+1$,
its difference with the exact flow $F(\dt)$ reads
\begin{equation*}
  G(\dt) - F(\dt)=
  \begin{pmatrix}
    1-\alpha(\dt) & \alpha (\dt) \\
    \beta(\dt) & 1 - \beta(\dt) \\
  \end{pmatrix}
  -{\rm e}^{-(N+1) c \dt}P_{\rm ex}-Q_{\rm ex}
  = {\mathcal O}(\dt ^ {p+1}).
\end{equation*}
This implies the following Taylor expansions for $\alpha$ and $\beta$:
\begin{equation*}
  \alpha(\dt) = (1-{\rm e}^{-c(N+1)\dt})(N/(N+1)) 
  +{\mathcal O}(\dt ^ {p+1})
  \,\, \text{and} \,\,
  \beta(\dt) = (1-{\rm e}^{-c(N+1)\dt})/(N+1)
  +{\mathcal O}(\dt ^ {p+1}).
\end{equation*}
We infer that the slope of the equilibrium state is
$  S(\dt) = {\alpha(\dt)}/{\beta(\dt)} = N
  +{\mathcal O}(\dt ^ {p}).$
\end{proof}

\bigskip Now, we define splitting schemes for the linear differential
system \eqref{eq:cas-simple-lineaire}, based on the composition
of exact flows or $\theta$-schemes discretizing the split equations.
We focus on their asymptotic behavior.
We know from Proposition \ref{prop:projection} and
Theorem \ref{th:ordreasymptotiquelineaire} that
for all initial data $(u^0,v^0)\in\R^2$, the numerical solutions
provided by such splitting schemes (assuming they are consistent with
Equation (\ref{eq:cas-simple-lineaire})) converge to an asymptotic state
when the numerical time $n\dt$ tends to infinity (and $\dt$ is fixed).
The typical questions of interest are the following:
What is the size of this relative asymptotic error
with respect to the numerical time-step $\dt$ ?
Can we do better than the estimate on the A-order
provided by Theorem \ref{th:ordreasymptotiquelineaire} ?

\subsection{Lie, Strang, and weighted splitting schemes with and
without subcycling for the linear system \eqref{eq:cas-simple-lineaire}}
\label{LS}
Denoting by $\dt>0$ the numerical time-step related to the ``slow''
equation, the time-step associated to the ``fast'' equation is then $\dt/N$.
The (exact or numerical) integration of the fast (resp. slow) equation of \eqref{eq:cas-simple-lineaire} over a time-step
$\dt/N$ (resp $\dt$) yields the flow
\begin{equation*}
  \Phi_{f,\dt/N}\mbox{(resp. }\Phi_{s,\dt}\mbox{)} \qquad\mbox{with matrix }\qquad
  M_f(\lambda_f(\dt/N))\,\,\mbox{(resp. }M_s(\lambda_s(\dt))\mbox{)},
\end{equation*}
with
$\lambda_s(\dt),\lambda_f\left(\dt/N\right)\in\R$.
To fix the notations,
we write the Taylor expansions in $\dt$ of
$\lambda_s(\dt)$ and $\lambda_f(\delta t/N)$ in the following way:
\begin{equation}
  \label{eq:defAsAf}
   \lambda_f(\dt/N) = 1 - c \dt + c^2 A_f \dt ^2 + {\mathcal O}(\delta
t^3)
\quad
{\rm and}
\quad
\lambda_s(\dt) = 1 - c\dt + c^2 A_s \dt^2 + {\mathcal O}(\delta
t^3),
\end{equation}
where $A_f,A_s\in\R$ are the coefficients of interest.
For any functions $\lambda_f$, $\lambda_s$ of $\dt$, we consider the
following six schemes: given $i\in\{1,\dots,6\}$ and $W^n\in\R^2$, we set
$$W^{n+1}=G_i(\dt) W^n.$$
\begin{itemize}
\item {\bf Scheme \#1}: (Lie type - slow time - subcycled)
$$ G_1(\dt) = M_s(\lambda_s(\dt)) M_f(\lambda_f(\delta
t/N))^N $$
\item {\bf Scheme \#2}: (Lie type - fast time - no subcycling)
$$ G_2(\dt) = \big(M_s(\lambda_s(\dt/N)) M_f(\lambda_f(\delta
t/N))\big)^N$$
\item {\bf Scheme \#3}: (Strang type - slow time - subcycled)
$$ G_3(\dt) = M_s(\lambda_s(\dt/2)) \
M_f(\lambda_f(\dt/N))^N\ M_s(\lambda_s(\dt/2)) $$
\item {\bf Scheme \#4}: (Strang type - fast time - no subcycling)
$$ G_4(\dt) = \big(M_s(\lambda_s(\dt/(2N))) \
M_f(\lambda_f(\dt/N))\ M_s(\lambda_s(\dt/(2N))) \big)^N $$
\item {\bf Scheme \#5}: (weighted type (\cite{Csomos05}) - with subcycling)
$$ G_5(\dt) = \frac{1}{2} \left(
M_s(\lambda_s(\dt))
M_f(\lambda_f(\dt/N))^N
+
M_f(\lambda_f(\dt/N))^N
M_s(\lambda_s(\dt))
\right)
$$
\item {\bf Scheme \#6}: (weighted type - without subcycling)
$$ G_6(\dt) = \frac{1}{2^N} \big(
M_s(\lambda_s(\dt/N))
M_f(\lambda_f(\dt/N))
+
M_f(\lambda_f(\dt/N))
M_s(\lambda_s(\dt/N))
\big)^N
$$
\end{itemize}

\begin{remark}
\label{rem:Nunknown}
  Since in actual applications, the ratio $N$ between fast and slow scales
in the system may not be known accurately
(one may only know that it is, say, of order $10^3$),
the advantage of using subcyling techniques (with a subcycling number
of the same order as that of $N$) is that one can expect to achieve higher order
without having to know that ratio exactly, at least on the very
academic linear problem \eqref{eq:cas-simple-lineaire}.
\end{remark}

\begin{remark}
  When dealing with slow/fast Lie-splitting methods,
one has to choose which equation
will be integrated first: either the slow equation first, and then the fast one (which we denote by FS),
or the fast equation and then the slow one (which we denote by SF). {We chose this notation because of the usual
convention on the composition of flows:
the first to be applied is written on the right-hand side of the others.}
Note that, in our very simple
linear setting, the eigenvalues, eigenvectors, spectral projectors, etc,
of any FS splitting method can be deduced from those of a SF splitting formula
in a way explained in Appendix \ref{sec:FS_to_SF} and the analysis
extends straightforwardly.
Therefore, we restrict ourselves to the study of SF Lie-splitting schemes.
We also focus on FSF Stang-splitting schemes.
For weighted schemes, we take advantage of the symmetry and use
both SF and FS schemes.
\end{remark}

Using the notations of Lemma \ref{boris}, we obtain the results
presented in Table \ref{tableau4schemas}.
\begin{table}[!h]
  \begin{tabular}{|c|c|c|}\hline
    Scheme \#  & function $\alpha$ & function $\beta$ \\ [2mm] \hline 
    \#1 &$ \alpha_1(\dt) = 1-(\lambda_f(\dt/N))^N \qquad$ & $\qquad
    \beta_1(\dt) = (1-\lambda_s(\dt))
    (\lambda_f(\dt/N))^N$\\[2mm]\hline
    \#2 & 
    $ \alpha_2(\dt) = 1-\lambda_f(\dt/N) \qquad$ &$
    \qquad \beta_2(\dt) = (1-\lambda_s(\dt/N))
    \lambda_f(\dt/N)$\\[2mm]\hline
    \#3 &
    $\alpha_3(\dt) = (1-\lambda_f(\dt/N)^N)
    [\lambda_s(\dt/2)]^N\quad$ & $\quad \beta_3(\dt) = (1-\lambda_s(\delta
    t/2)) (1+[\lambda_f(\dt/N)^N]\lambda_s(\dt/2))$\\[2mm]\hline
    \#4 &
    $ \alpha_4(\dt) = (1-\lambda_f(\dt/N))
    \lambda_s(\dt/2) \quad$ &
    $\quad \beta_4(\dt) = (1-\lambda_s(\delta
    t/2)) (1+\lambda_f(\dt/N) \lambda_s(\dt/2))$\\[2mm]\hline
    \#5 &
    $ \alpha_5(\dt) = (1-\lambda_f(\dt/N))^N)
    (1+\lambda_s(\dt))/2 \quad$ &
    $\quad \beta_5(\dt) = (1-\lambda_s(\dt))
    (1+\lambda_f(\dt/N))^N)/2
    $\\[2mm]\hline
    \#6 &
    $ \alpha_6(\dt) = (1-\lambda_f(\dt/N)))
    (1+\lambda_s(\dt/N))/2^N \quad$ &
    $\quad \beta_6(\dt) = (1-\lambda_s(\dt/N))
    (1+\lambda_f(\dt/N)))/2^N
    $\\[2mm]
\hline
  \end{tabular}\bigskip
  \caption{The functions $\alpha$ and $\beta$ for the schemes \#1, \#2, \#3,
\#4, \#5, and \#6}
  \label{tableau4schemas}
\end{table}

\paragraph{Asymptotic order}
The above computations enable us to prove the following
\begin{proposition}[Lie splitting properties]
\label{prop:LiesansCsomos} 
Let $G(\dt)$ be a Lie splitting method such as
Schemes \#1 and \#2.\\
Then 
\begin{enumerate}
\item if $G(\dt)$ involves two methods of order at least 1,
then it has a classical order of at least
1 and an A-order of at least 1,
\item  if $G(\dt)$ involves two schemes of order at least 2,
then its A-order is at most 1,
\item however, there exists a combination of schemes
of order 1 such that $G(\dt)$ is a method of A-order at least 2
(even if its classical order is $1$).
\end{enumerate}
\end{proposition}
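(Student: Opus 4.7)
The plan is to combine Theorems~\ref{th:ordrenonlineaire} and~\ref{th:ordreasymptotiquelineaire} for assertion~(1), and to perform a first-order Taylor expansion in $\dt$ of the asymptotic slope $S_i(\dt)=\alpha_i(\dt)/\beta_i(\dt)$ given by Table~\ref{tableau4schemas} for $i\in\{1,2\}$ to treat assertions~(2) and~(3).

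For assertion~(1), since the Lie splitting formula applied to the exact flows has classical order $p_{\rm ex}=1$, Theorem~\ref{th:ordrenonlineaire} immediately gives classical order at least $\min(p_f,p_s,1)=1$ whenever the underlying schemes have orders $p_f,p_s\geq 1$. The local truncation error is then of size $\mathcal{O}(\dt^2)$, and Theorem~\ref{th:ordreasymptotiquelineaire} yields an A-order of at least~$1$.

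For assertions~(2) and~(3), I would expand $S_i(\dt)$ in powers of $\dt$ using~\eqref{eq:defAsAf}. The key intermediate step is the expansion of $\lambda_f(\dt/N)^N$, which I would carry out by first passing to the logarithm, $N\ln\lambda_f(\dt/N)=-Nc\dt+Nc^2(A_f-1/2)\dt^2+\mathcal{O}(\dt^3)$, and then re-exponentiating. Combining the resulting expansion with that of $\lambda_s(\dt)$ (for Scheme~\#1) or $\lambda_s(\dt/N)$ (for Scheme~\#2) and simplifying the ratio $\alpha_i(\dt)/\beta_i(\dt)$, I expect to find
\[
S_1(\dt)-N=Nc\bigl(A_s-A_f+\tfrac{N+1}{2}\bigr)\dt+\mathcal{O}(\dt^2)\quad\text{and}\quad S_2(\dt)-N=c\bigl(A_s+N(1-A_f)\bigr)\dt+\mathcal{O}(\dt^2).
\]

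Assertion~(2) then follows by observing that underlying schemes of order at least two must match the Taylor expansion of the exact flows $\lambda_f^{\rm ex}(\dt/N)=\lambda_s^{\rm ex}(\dt)=\mathrm{e}^{-c\dt}$ up to order two, which forces $A_f=A_s=1/2$; inserting these values in the two expansions above produces a strictly positive first-order coefficient, so the A-order equals exactly~$1$. For assertion~(3), I would exhibit an explicit example in the family of Scheme~\#2: the cancellation condition $A_f-A_s=(N+1)/2$ required for Scheme~\#1 lies outside the range $[-1,1]$ accessible to natural order-one $\theta$-schemes as soon as $N\geq 2$, whereas for Scheme~\#2 the condition $A_s+N(1-A_f)=0$ is satisfied by taking a forward Euler for the slow equation ($A_s=0$) and a backward Euler for the fast equation ($A_f=1$), both of which are order-one schemes. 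A direct substitution in Table~\ref{tableau4schemas} moreover collapses the ratio to $S_2(\dt)\equiv N$ for this combination, so the asymptotic error vanishes identically and the A-order is arbitrarily large.

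The main technical step is the expansion of $\lambda_f(\dt/N)^N$, in which the $N/2$ contribution arising from the square of the linear term of $\ln\lambda_f(\dt/N)$ must be carefully retained: it produces the $(N+1)/2$ offset in the first-order coefficient of $S_1(\dt)-N$ and hence the obstruction to achieving A-order two with Scheme~\#1 via natural order-one $\theta$-schemes. Once this subleading coefficient is correctly identified, the remainder of the proof is routine algebra.
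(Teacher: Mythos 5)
Your proposal is correct and follows essentially the same route as the paper: assertion (1) via Theorems~\ref{th:ordrenonlineaire} and~\ref{th:ordreasymptotiquelineaire}, assertions (2) and (3) via the first-order Taylor expansions of $S_1(\dt)$ and $S_2(\dt)$ (your expansions match \eqref{eq:TaylorS1} and \eqref{eq:TaylorS2} exactly), and the same witness $(A_f,A_s)=(1,0)$ for Scheme \#2 in part (3). Your extra observations --- the detailed log/exp expansion of $\lambda_f(\dt/N)^N$, the impossibility of cancellation for Scheme \#1 with $\theta$-schemes, and the identity $\alpha_2=N\beta_2$ giving infinite A-order for the explicit-slow/implicit-fast Euler pair --- are all correct and anticipate Proposition~\ref{Prop:LieCsomosTheta1}, but do not change the argument.
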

\begin{proof}
 \begin{enumerate}
 \item 
  The fact that Schemes \#1 and \#2 have a classical order of at least 1 follows
  from Theorem \ref{th:ordrenonlineaire}. The fact that their asymptotic
  order is at least 1 is granted by Theorem \ref{th:ordreasymptotiquelineaire}. 
  \item Let us consider Scheme \#1
  and write, using the Taylor expansions \eqref{eq:defAsAf},
\begin{equation}
\label{eq:TaylorS1}
S_1(\dt) = {\alpha_1(\dt)}/{\beta_1(\dt)} =
N+cN(A_s-A_f+(N+1)/2) \dt+ {\mathcal O}(\delta
t^2).
\end{equation}
When the two schemes are of order at least 2, we have $A_f=A_s=1/2$, so that the
A-order of $G_1(\dt)$ is exactly 1.
A similar computation yields
\begin{equation}
\label{eq:TaylorS2}
S_2(\dt) = {\alpha_2(\dt)}/{\beta_2(\dt)} =
N+c ((1-A_f) N+A_s) \dt + {\mathcal O}(\delta
t^2),
\end{equation}
so that the same conclusion is true for $G_2(\dt)$.
\item For $G_2(\dt)$, the choice $(A_f,A_s)=(1,0)$ leads to 
an A-order of at least 2 with two underlying methods of order 1 (see
the Taylor expansion \eqref{eq:TaylorS2} for Scheme \#2).
\end{enumerate}
\end{proof}

\begin{proposition}[weighted splitting properties]
\label{prop:CsomossansLie} 
Let $G(\dt)$ be a weighted splitting method such as
Schemes \#5 and \#6.\\
Then 
\begin{enumerate}
\item if $G(\dt)$ involves two methods of order 2,
then it has a classical order of at least 2 and an A-order of at least 2,
\item there exists a combination of schemes
of order 1 such that $G(\dt)$ is a  method of A-order at least 2
(even if its classical order is $1$),
\item moreover, using subcycling (Scheme \#5), there is a one-parameter
family (which does not depend on n) of couple of schemes of order 1
such that the corresponding weighted splitting is of A-order 2.
\end{enumerate}
\end{proposition}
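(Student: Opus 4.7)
The plan is to verify each of the three statements by combining the abstract tools of Sections~\ref{sec:fw} and \ref{subsec:GenPropSplit} for the classical and asymptotic order, and by direct Taylor expansion of the ratio $S=\alpha/\beta$ built from the closed-form expressions recorded in Table~\ref{tableau4schemas}.

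For statement~1, recall that the symmetric weighted (Csomós) splitting applied with \emph{exact} flows is known to have classical order $2$. Theorem~\ref{th:ordrenonlineaire} then guarantees that combining two underlying schemes of order $p_f=p_s=2$ through either of Schemes~\#5 or \#6 produces a scheme of global classical order at least $\min(p_f,p_s,2)=2$; equivalently, its local order is at least $3$, and Theorem~\ref{th:ordreasymptotiquelineaire} yields A-order at least $2$.

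The heart of the argument for statements~2 and~3 is the Taylor expansion of $S_5$. Using the identity $M_f(\lambda_f)^N=M_f(\lambda_f^N)$, the expressions for $\alpha_5$ and $\beta_5$ in Table~\ref{tableau4schemas} are obtained by the same computation as in Section~\ref{subsec:GenPropSplit}. Writing $L:=\lambda_f(\dt/N)^N$ and expanding via the binomial formula,
\begin{equation*}
L=1-Nc\dt+c^2\dt^2\!\left(NA_f+\tfrac{N(N-1)}{2}\right)+{\mathcal O}(\dt^3),
\end{equation*}
so that $1-L=Nc\dt[1-c\dt(A_f+(N-1)/2)]+{\mathcal O}(\dt^3)$ and $1+L=2-Nc\dt+{\mathcal O}(\dt^2)$. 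Inserting these expansions, together with \eqref{eq:defAsAf}, into $S_5=\alpha_5/\beta_5$ and forming the quotient, the $N$-dependent contributions coming from $N(N-1)/2$ cancel pairwise between numerator and denominator, leaving
\begin{equation*}
S_5(\dt)=N+Nc\dt(A_s-A_f)+{\mathcal O}(\dt^2).
\end{equation*}
This identity proves both statements~2 and~3: the coefficient of $\dt$ vanishes precisely when $A_s=A_f$, so any choice with $A_f=A_s$ and $A_f\neq 1/2$ (for instance two explicit Euler schemes, i.e.\ $A_f=A_s=0$) provides an order-$1$ combination whose A-order is at least $2$; letting the common value $A_f=A_s$ vary in $\R$ exhibits the announced one-parameter family, and the cancelation condition $A_s=A_f$ does not involve $N$, which is the property highlighted in Remark~\ref{rem:Nunknown}.

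The main obstacle is the careful bookkeeping of the $N$-dependent constants in the binomial expansion of $\lambda_f(\dt/N)^N$: the coefficient of $\dt^2$ grows like $N^2$, so one must retain the full second-order term before taking the quotient, and the nontrivial point is that the $N(N-1)/2$ combinatorial contributions cancel between numerator and denominator after factorization. For completeness one may perform the analogous computation for the non-subcycled Scheme~\#6 using the spectral decomposition $B=Q'+\lambda_f\lambda_s(I-Q')$ of $B=(M_sM_f+M_fM_s)/2$, from which $G_6=B^N$; the ratio $S_6$ is $N$-dependent in the linear coefficient, showing that no $N$-independent one-parameter family exists without subcycling, in sharp contrast with Scheme~\#5.
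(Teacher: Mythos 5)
Your proposal is correct and follows essentially the same route as the paper: part~1 via Theorem~\ref{th:ordrenonlineaire} (with $p_{\rm ex}=2$ for the weighted composition of exact flows) and Theorem~\ref{th:ordreasymptotiquelineaire}, and parts~2 and~3 via the Taylor expansion $S_5(\dt)=N+Nc(A_s-A_f)\dt+{\mathcal O}(\dt^2)$, which reproduces \eqref{eq:TaylorS5} and gives the $N$-independent family $A_s=A_f$, contrasted with the $N$-dependent condition $NA_f-A_s=(N-1)/2$ from \eqref{eq:TaylorS6}. You merely supply more of the intermediate bookkeeping (the binomial expansion of $\lambda_f(\dt/N)^N$ and the observation $G_6=B^N$) than the paper records.
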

\begin{proof}
 \begin{enumerate}
 \item 
  The fact that Schemes \#5 and \#6 have classical and asymptotic orders
at least 2 follows from Theorem \ref{th:ordrenonlineaire} and
Theorem \ref{th:ordreasymptotiquelineaire}. 
  \item Let us compute
\begin{equation}
  \label{eq:TaylorS5}
  S_5(\dt)= N+N c \left( A_s - A_f  \right) \dt
  + {\mathcal O}(\dt^2),
\end{equation}
and
\begin{equation}
  \label{eq:TaylorS6}
  S_6(\dt)=
N+c \left( N-1-2 N A_f+2A_s \right)
\dt/2
  + {\mathcal O}(\dt^2).
\end{equation}
Letting $A_f=A_s\neq 1/2$ in \eqref{eq:TaylorS5}
proves the result for couples of methods of order 1.
\item In order to obtain a method of A-order 2,
one needs to solve $A_s=A_f$ for Scheme \#5 and
$(N A_f-A_s)=(N-1)/2$ for Scheme \#6.
This proves the result.
\end{enumerate}
\end{proof}

\begin{remark}
The fact that a combination of two methods of classical order 1
can lead to a method of asymptotic order 2 is highly remarkable
since such a combination is in general of asymptotic order 1
as one can check on the Taylor expansions above.
However, in general, the coefficients $A_f$ and $A_s$ defining the methods
involved in a such combination that achieves asymptotic order 2 depend
on the ratio parameter $N$. We stress here that, in some
cases, using subcyling on appropriate methods, one can choose the coefficients
to be independent of $N$ (Scheme \#5). Note that, without subcycling,
in general, the coefficients $A_s$ and $A_f$ required to reach order 2
with weighted splitting Scheme \#6 do depend on $N$(except when the two
underlying methods are themselves of order 2 when $A_s=A_f=1/2$).
One can see in such a feature an advantage of using methods involving
subcyling.
\end{remark}

Let us now describe some properties of Strang splitting schemes.
\begin{proposition}[Strang splitting properties]\label{prop:Strang}
Let $G(\dt)$ be a Strang-splitting method such as Schemes \#3 and \#4.\\
Then
\begin{enumerate}
\item if $G(\dt)$ involves schemes of order at least 2, $G(\dt)$
has an order of at least 2 and an A-order of at least 2,
\item if $G(\dt)$ involves a scheme of order 1, $G(\dt)$ is of order 1,
but there exists a one-parameter family of 
schemes of order 1 such that the A-order of $G(\dt)$ is 2.
\end{enumerate}
\end{proposition}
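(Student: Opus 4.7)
The plan is to follow the same strategy as in the proofs of Propositions~\ref{prop:LiesansCsomos} and \ref{prop:CsomossansLie}. For the first claim, I would invoke Theorem~\ref{th:ordrenonlineaire} together with the well-known fact that a Strang composition with exact flows has classical order $p_{\rm ex}=2$: when both sub-schemes have order at least $2$, this yields a classical order of at least $\min(p_f,p_s,p_{\rm ex})=2$ for $G(\dt)$, and Theorem~\ref{th:ordreasymptotiquelineaire} promotes this into an A-order of at least $2$.

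For the second claim, Theorem~\ref{th:ordrenonlineaire} only gives a classical order of at least $1$ (and this lower bound is tight in general, whence the ``of order~$1$'' in the statement). To investigate the A-order, I would plug the expansions~\eqref{eq:defAsAf} into the formulae for $\alpha_j$ and $\beta_j$ in Table~\ref{tableau4schemas} with $j\in\{3,4\}$, and form $S_j(\dt)=\alpha_j(\dt)/\beta_j(\dt)$. Since Theorem~\ref{th:ordreasymptotiquelineaire} ensures $S_j(\dt)\to N$ as $\dt\to 0$, the expansion must read
\begin{equation*}
S_j(\dt)=N+Nc\,\ell_j(A_f,A_s,N)\,\dt+{\mathcal O}(\dt^2),
\end{equation*}
where $\ell_j$ is an affine function of $(A_f,A_s)$. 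The equation $\ell_j(A_f,A_s,N)=0$ then defines a line in the $(A_f,A_s)$-plane, each point of which corresponds to a combination of sub-schemes producing an A-order of at least~$2$. Removing the single point $(A_f,A_s)=(1/2,1/2)$, where both sub-schemes happen to be of order $2$, yields the claimed one-parameter family of order-$1$ sub-schemes for which $G(\dt)$ has A-order $2$.

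The principal obstacle is purely computational: one has to expand $\lambda_f(\dt/N)^N$ to order $\dt^2$, taking care to collect the binomial contribution $\binom{N}{2}c^2\dt^2$ coming from the linear part of $\lambda_f(\dt/N)$ together with the quadratic contribution $Nc^2A_f\dt^2$, which produces
\begin{equation*}
\lambda_f(\dt/N)^N=1-Nc\dt+c^2\dt^2\left(NA_f+\tfrac{N(N-1)}{2}\right)+{\mathcal O}(\dt^3).
\end{equation*}
Once this expansion is in hand, it can be combined with those of $\lambda_s(\dt/2)$ (for Scheme~\#3) and $\lambda_s(\dt/(2N))$ (for Scheme~\#4) via the Table~\ref{tableau4schemas} formulae to extract $\ell_3$ and $\ell_4$. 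The remaining algebra is elementary but tedious; as a byproduct, one can read off whether $\ell_j$ depends on $N$ or not, which governs whether the associated one-parameter family of A-order-$2$ methods is $N$-independent---a possible advantage of subcycling, in the spirit of the remark following Proposition~\ref{prop:CsomossansLie}.
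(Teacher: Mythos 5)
Your proposal is correct and follows essentially the same route as the paper: part 1 via Theorem~\ref{th:ordrenonlineaire} (with $p_{\rm ex}=2$ for Strang) and Theorem~\ref{th:ordreasymptotiquelineaire}, and part 2 by expanding $S_j(\dt)=\alpha_j(\dt)/\beta_j(\dt)$ to first order in $\dt$ and cancelling the affine coefficient in $(A_f,A_s)$, which is exactly how the paper obtains \eqref{eq:TaylorS3}--\eqref{eq:TaylorS4} and the example $(A_f,A_s)=(1/4,0)$. Your expansion of $\lambda_f(\dt/N)^N$, including the binomial term $N(N-1)/2$, matches the computation underlying the paper's stated coefficients.
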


\begin{proof} 
 \begin{enumerate}
\item  The fact that a Strang-splitting method involving two methods of
order 2 is of order at least 2 comes from Theorem \ref{th:ordrenonlineaire}.
The fact that its A-order is at least 2 follows from Theorem
\ref{th:ordreasymptotiquelineaire}.
\item  Assume we have the same Taylor expansion of $\lambda_f$ and $\lambda_s$
as in the proof of Proposition \ref{prop:LiesansCsomos} and Proposition \ref{prop:CsomossansLie}.
  For Scheme \#3, we have
\begin{equation}
\label{eq:TaylorS3}
S_3(\dt) = {\alpha_3(\dt)}/{\beta_3(\dt)}=
N+ N c (2 A_s-1+2-4 A_f) \dt / 4 + {\mathcal O}(\dt^2),
\end{equation}
and for Scheme \#4
\begin{equation}
\label{eq:TaylorS4}
S_4(\dt) = {\alpha_4(\dt)}/{\beta_4(\dt)}=
N+c (N(2 A_f -1)+2-4 A_s) \dt /4 + {\mathcal O}(\dt^2).
\end{equation}
One infers the equations to solve for $A_f$ and $A_s$ to prove the result.
For example, one can choose $(A_f,A_s)=(1/4,0)$ to have a Scheme \#3 of 
A-order at least 2 involving two schemes of order 1.\\
  \end{enumerate}
  \end{proof}
  
  \begin{remark}
    \label{remarqueordreplus}
  In contrast to what occurs in the Lie case, the dependence upon $N$
  in the Strang subcycled scheme \#3 is decoupled from the combination of $A_f$
  and $A_s$. 
  \end{remark}
\begin{remark}
\label{remarkFSF}
  We can exchange the influence of the choices of $A_s$ and $A_f$ in the A-order
  by Strang-splitting with the order FSF, that is, by introducing
  \begin{align*}
    \widetilde{G_3}(\dt)& = M_f(\lambda_f(\dt/(2N)))^N \ M_s(\lambda_s(\dt))\
    M_f(\lambda_f(\dt/(2N)))^N,\\
 \widetilde{G_4}(\dt)& = (M_f(\lambda_f(\dt/(2N))) \ M_s(\lambda_s(\dt))\
    M_f(\lambda_f(\dt/(2N))))^N,
  \end{align*}
  thanks to the computations detailed in Appendix \ref{sec:FS_to_SF}.
  The coefficient in front of $\dt^2$ is then $1-4A_s+2A_f$
  (resp. $2(2A_s-1)+N(1-2A_f)$) for Scheme $\widetilde{\#3}$ (resp.
  $\widetilde{\#4}$).
\end{remark}

\paragraph{Convergence rate}

Let us perform the same analysis on the convergence rate to equilibrium, {\em
  i.e.} the eigenvalues $\mu_i(\dt)$, $i\in\{1,\hdots,4\}$
of the matrices $G_i(\dt)$ defined in Lemma \ref{boris}.
We get the Taylor expansions of
\begin{equation*}
  \rho_i(\dt)=\mu_i(\dt)-{\rm
  e}^{-c(N+1)\dt},
\end{equation*}
that we summarize in Table \ref{tableau2rho}.
\begin{table}[!h]
  \begin{center}
  \begin{tabular}{|c|c|}
    \hline\hspace*{1mm}
    $i$& $(A_f,A_s)$\\[1mm]\hline
    $\rho_1(\dt)$&$c^2(N(2A_f-1)+2A_s-1)\dt^2/2+\mathcal{O}(\dt^3)$  \\[1mm]\hline
     $\rho_2(\dt)$&$c^2(N^2(2A_f-1)+2A_s-1)\dt^2/(2N)+\mathcal{O}(\dt^3)$  \\[1mm]\hline
   $\rho_3(\dt)$&$c^2(2N(2A_f-1)+2A_s-1)\dt^2/4+\mathcal{O}(\dt^3)$  \\[1mm]\hline
    $\rho_4(\dt)$&$c^2(2N^2(2A_f-1)+2A_s-1)\dt^2/(4N)+\mathcal{O}(\dt^3)$  \\[1mm]\hline
  $\rho_5(\dt)$&$c^2(N A_f + A_s - (N+1)/2)\dt^2+\mathcal{O}(\dt^3)$  \\[1mm]\hline
$\rho_6(\dt)$&$c^2\left( N^2(2 A_f-1)+(2 A_s-1)\right) \dt^{2}/(2N)
+\mathcal{O}(\dt^3)$  \\[1mm]\hline
  \end{tabular}
  \caption{The functions $\rho$ for the schemes \#1, \#2, \#3, \#4, \#5 and \#6}
  \label{tableau2rho}
  \end{center}
\end{table}
One notes at once that second order fast and slow schemes generate a second
order approximation of the convergence rate, as well as an A-order of 2 for
Schemes \#3 \#4, \#5 and \#6.
Besides, one can manage to construct a second order
approximated rate by choosing at least one of the fast and slow schemes to be of
order 1, but the A-order will then be exactly 1.

\paragraph{Application to $\theta$-schemes}
In this paragraph, we consider two
$\theta$-schemes for the numerical solutions of the fast and slow equations
of system \eqref{eq:cas-simple-lineaire}.
We take $(\theta_f,\theta_s)\in[0,1]^2$ and we set
\begin{equation*}
    \lambda_{f}(\dt)=  \dfrac{1-Nc(1-\theta_f)\delta
      t}{1+\theta_f Nc\dt}
    \qquad \text{and} \qquad 
    \lambda_{s}(\dt)=  \dfrac{1-c(1-\theta_s)\delta
     t}{1+\theta_s c\dt}.
\end{equation*}
 In particular, we have
\begin{equation}
\label{eq:LienAtheta}
  (A_{f},A_{s})=(\theta_f,\theta_s).
\end{equation}

Classically, in order to ensure that the associated schemes  are 
A-stable in the classical sense (see \cite{hw}),
in case $\theta_f\in[0,1/2)$ (resp. $\theta_s\in[0,1/2)$), we assume that
$(1-2\theta_f)cN\dt/N <2$ (resp.   $(1-2\theta_s)c\dt <2$) so that
$\lambda_f^{\theta_f}(\dt/N)\in(0,1)$
(resp. $\lambda_s^{\theta_s}(\dt)\in(0,1)$).
The stability interval $\mathcal{I}_N$ defined at the beginning of Section
\ref{subsec:GenPropSplit} is the intersection of the corresponding domains in
$\dt$. Our choice of different time-steps for the slow and fast equations
in order to use subcycling techniques implies that $\mathcal{I}_N$
is independent of $N$ in that case.

\medskip

The results of the previous paragraphs provide us with the following
propositions, when the underlying numerical integration methods are
$\theta$-schemes. For Lie-splitting methods (Schemes \#1 and \#2)
and the weighted splitting methods (Schemes \#5 and \#6):

{
\begin{proposition}[Lie and weighted splitting methods involving
$\theta$-schemes]
  \label{Prop:LieCsomosTheta1}
  Assume $N>1$.
  \begin{enumerate}
  \item  Lie-splitting with $\theta$-schemes :
  The only scheme of type \#1 or \#2 of A-order at least 2 involving two
  $\theta$-schemes is of type \#2 with $\theta_s=0$ (fully explicit)
  and $\theta_f=1$ (fully implicit).
  In this very particular case, the A-order is infinite
  because $\alpha_2=N\beta_2$.
  \item weighted splitting methods with $\theta$-schemes :
There exists two one-parameter families of schemes involving
$\theta$-schemes, one of type \#5 and another one of type \#6, with
A-order 2.
  \end{enumerate}
\end{proposition}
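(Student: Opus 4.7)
The plan is to specialize the general Taylor expansions of $S_i(\dt)$ already established in the proofs of Proposition~\ref{prop:LiesansCsomos} and Proposition~\ref{prop:CsomossansLie} to the $\theta$-scheme case, where the identification \eqref{eq:LienAtheta} gives $(A_f,A_s)=(\theta_f,\theta_s)\in[0,1]^2$, and then to solve the resulting linear constraints on $(\theta_f,\theta_s)$ that express the vanishing of the first-order coefficient in $\dt$.

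For part 1 (Lie-splitting), I would apply \eqref{eq:TaylorS1} and \eqref{eq:TaylorS2}. Substituting $(A_f,A_s)=(\theta_f,\theta_s)$ in \eqref{eq:TaylorS1} gives the condition $\theta_s-\theta_f+(N+1)/2=0$, i.e.\ $\theta_f-\theta_s=(N+1)/2$; since $N>1$, the right-hand side exceeds $1$, while $\theta_f-\theta_s\in[-1,1]$, so there is no admissible pair and no Scheme \#1 of A-order at least 2 among $\theta$-schemes. For Scheme \#2, the same operation on \eqref{eq:TaylorS2} yields $(1-\theta_f)N+\theta_s=0$; since both summands are non-negative on $[0,1]^2$, they must vanish separately, forcing the unique choice $\theta_f=1$, $\theta_s=0$. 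To upgrade the conclusion for this distinguished pair from ``A-order at least 2'' to ``A-order infinite'', the Taylor analysis is not sufficient; instead I would plug $\theta_f=1$, $\theta_s=0$ directly into the closed-form definitions of $\lambda_f(\dt/N)$ and $\lambda_s(\dt/N)$ given in the paragraph, obtaining $\lambda_f(\dt/N)=1/(1+c\dt)$ and $\lambda_s(\dt/N)=1-c\dt/N$, then insert these into the expressions of $\alpha_2$ and $\beta_2$ from Table~\ref{tableau4schemas}. A short algebraic manipulation yields $\alpha_2(\dt)=c\dt/(1+c\dt)$ and $\beta_2(\dt)=(c\dt/N)/(1+c\dt)$, so that $\alpha_2(\dt)=N\beta_2(\dt)$ identically in $\dt$, which gives $S_2(\dt)\equiv N$ and therefore $\varepsilon^\infty\equiv 0$, proving that the A-order is infinite.

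For part 2 (weighted splittings), I would proceed analogously with \eqref{eq:TaylorS5} and \eqref{eq:TaylorS6}. Setting the first-order coefficient to zero in \eqref{eq:TaylorS5} produces the single linear constraint $\theta_s=\theta_f$; this defines a one-parameter family $\{(\theta,\theta):\theta\in[0,1]\}$ of $\theta$-scheme pairs of A-order at least 2 for Scheme \#5, and one verifies that the corresponding stability interval $\mathcal{I}_N$ is non-empty (and $N$-independent, as pointed out at the end of the paragraph). For Scheme \#6, the analogous condition coming from \eqref{eq:TaylorS6} is $N-1-2N\theta_f+2\theta_s=0$, i.e.\ $\theta_s=N\theta_f+(1-N)/2$; I would then check that for $N>1$ the interval $\theta_f\in[(N-1)/(2N),(N+1)/(2N)]$ keeps both $\theta_f$ and $\theta_s$ in $[0,1]$, which yields the second one-parameter family.

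The only step where care is genuinely needed is the infinite A-order claim for Scheme \#2 at $(\theta_f,\theta_s)=(1,0)$: the Taylor expansions only certify order at least 2, so one has to go back to the rational expressions for $\alpha_2$ and $\beta_2$ and verify the exact identity $\alpha_2=N\beta_2$ rather than just its vanishing at low order. Everything else reduces to solving at most one linear equation in $(\theta_f,\theta_s)\in[0,1]^2$ and discussing existence depending on $N$.
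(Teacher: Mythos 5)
Your proposal is correct and follows exactly the paper's own route: substitute $(A_f,A_s)=(\theta_f,\theta_s)$ via \eqref{eq:LienAtheta} into the Taylor expansions \eqref{eq:TaylorS1}, \eqref{eq:TaylorS2}, \eqref{eq:TaylorS5} and \eqref{eq:TaylorS6} and cancel the first-order coefficients, with the infeasibility of $\theta_f-\theta_s=(N+1)/2$ on $[0,1]^2$ ruling out Scheme \#1 and the sign argument forcing $(\theta_f,\theta_s)=(1,0)$ for Scheme \#2. Your explicit verification of the exact identity $\alpha_2=N\beta_2$ (hence $S_2\equiv N$) and of the admissible $\theta_f$-range for the Scheme \#6 family are details the paper leaves implicit, and they are both correct.
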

}
\begin{proof}
\begin{enumerate}
  \item Plugging relation \eqref{eq:LienAtheta} in the Taylor expansions
\eqref{eq:TaylorS1} and \eqref{eq:TaylorS2}, the result follows
by cancelling the terms of order 1.
  \item Plugging \eqref{eq:LienAtheta} into the Taylor expansions
\eqref{eq:TaylorS5} and \eqref{eq:TaylorS6} yields the result.
\end{enumerate}
\end{proof}

\begin{remark}
  Note that, if a fully implicit scheme is at hand
  for the fast equation, it seems unwise to use a subcycling technique anyway,
  since there is no stability constraint on $\dt$ from the fast scheme part.
\end{remark}

\begin{remark}
\begin{enumerate}
  \item Concerning the Lie-splitting methods involving $\theta$-schemes,
one can check that no Lie-splitting scheme of type \#1 or \#2 has A-order
2 with an approximation of order 3 of the rate of convergence
(see relation \eqref{eq:LienAtheta} and
the Taylor expansions in the first two lines of Table \ref{tableau2rho}).
  \item Concerning the weighted splitting methods involving $\theta$-schemes,
note once again that the one-parameter family is independent of $N$
for the subcycled weighted scheme \#5, while it depends on $N$
for the non-subcycled weighted scheme \#6. This is an extra advantage
of subcycled schemes when $N$ is not known exactly
(see Remark \ref{rem:Nunknown}). Moreover,
using a weighted scheme with subcycling (type \#5)
allows to use a composition of two explicit schemes ($\theta_f=\theta_s=0$)
which has A-order 2 (see Fig. \ref{fig:Aorder2}).
\end{enumerate}
\end{remark}
\begin{figure}\centering
 \begin{tikzpicture}[scale=5]

 \draw[->] (0,0) -- (1.1,0) node[right] {$\theta_s$};

 \draw[->] (0,0) -- (0,1.1) node[above] {$\theta_f$};

 \draw[thick,dashdotted] (0,0.25) -- (1,.75) node[right] {$\# 3$ (slope $0.5$)};

 \draw[thick,dashdotted] (0,0.45) -- (1,.55) node[right] {$\# 4$ (slope $2/N$)};

 \draw[thick] (0,0) -- (1,1) node[right] {$\# 5$ (slope $1$)};

 \draw[thick,dashed] (0.25,0) -- (.75,1) node[above] {$\widetilde{\# 3}$ (slope $2$)};

 \draw[dotted] (0.5,0) -- (0.5,0.5);

 \draw[dotted] (0,0.5) -- (0.5,0.5);

 \draw[dotted] (0.75,0) -- (0.75,1);

 \draw[thick] (0,1) -- (1,1);

 \draw[thick] (1,0) -- (1,1);

 \draw[dotted] (0,0.75) -- (1,0.75);

 \node at (0,0.5) [left] {$0.5$};

 \node at (0.5,0) [below] {$0.5$};

 \node at (0,0.25) [left] {$0.25$};

 \node at (0.25,0) [below] {$0.25$};

 \node at (0,0.75) [left] {$0.75$};

 \node at (0.75,0) [below] {$0.75$};

 \node at (0,0) [left] {$0$ (explicit)};

 \node at (1,0) [below] {$1$};

 \node at (0,1) [left] {$1$ (implicit)};

 \end{tikzpicture}
\caption{One-parameter families of splitting schemes of A-order 2}
\label{fig:Aorder2}
\end{figure}
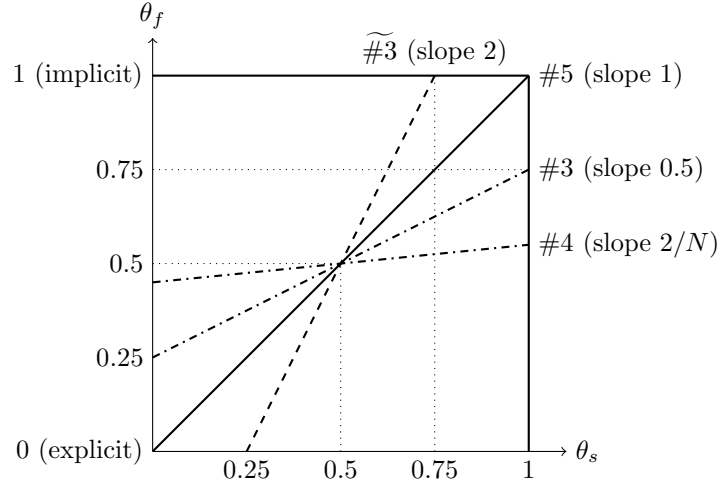


{
\begin{proposition}[Strang-splitting methods involving $\theta$-schemes]
  \label{prop:StrangTheta1}
Assume $N>1$.
\begin{enumerate}
\item There exists a one-parameter family of schemes of type \#3 with A-order
2, and another one of schemes of type \#4 with A-order 2.
\item  Using $\theta$-schemes, it is then possible to build a scheme of type
$\widetilde{\#3}$ (see Remark \ref{remarkFSF}) of A-order 2
with an explicit fast scheme ($\theta_f=0$) and
a semi-implicit slow scheme ($\theta_s=1/4$).
\item  Using the Strang-splitting (Schemes \#3 and \#4), the only combination of
$\theta$-schemes leading to a third order approximated rate of convergence
and having A-order 2 consists in taking the Crank-Nicolson
scheme for both the fast and slow schemes.
\end{enumerate}
\end{proposition}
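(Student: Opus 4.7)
The plan is to carry out each of the three claims by straightforwardly exploiting the Taylor expansions already computed, together with the identification $(A_f,A_s)=(\theta_f,\theta_s)$ given by \eqref{eq:LienAtheta}. Throughout, the analysis reduces to solving linear (affine) equations in $(\theta_f,\theta_s)$, so the work is mostly bookkeeping.

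For the first claim, I would plug \eqref{eq:LienAtheta} into \eqref{eq:TaylorS3} and \eqref{eq:TaylorS4} and require the coefficient of $\dt$ to vanish so that, by the definition of A-order and the relation \eqref{eq:erreurasympt}, the asymptotic error is of order at least $\dt^2$. For Scheme \#3, this produces the single linear constraint $2\theta_s+1-4\theta_f=0$, i.e. $\theta_f=(2\theta_s+1)/4$, describing a one-parameter family (parameterized by $\theta_s\in[0,1]$, modulo the interval conditions ensuring A-stability). For Scheme \#4 the analogous condition is $N(2\theta_f-1)+2-4\theta_s=0$, also defining a one-parameter family.

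For the second claim, I would use the formula given in Remark \ref{remarkFSF}: the coefficient of $\dt$ in the Taylor expansion of $\widetilde S_3(\dt)-N$ is proportional to $1-4A_s+2A_f$. Substituting \eqref{eq:LienAtheta} and setting $\theta_f=0$ immediately yields $\theta_s=1/4$, which is the claimed semi-implicit slow scheme paired with the explicit fast scheme.

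For the third claim, which is the most delicate, I would combine the A-order~2 condition from part 1 with the condition that the convergence rate is approximated at order three, i.e.\ that the leading $\dt^2$ coefficient in $\rho_i(\dt)$ from Table \ref{tableau2rho} vanishes. For Scheme \#3 this gives the system
\begin{equation*}
\begin{cases}
2\theta_s+1-4\theta_f=0,\\
2N(2\theta_f-1)+(2\theta_s-1)=0.
\end{cases}
\end{equation*}
Eliminating $2\theta_s-1$ using the first equation ($2\theta_s-1=2(2\theta_f-1)$) reduces the second to $(2N+2)(2\theta_f-1)=0$, forcing $\theta_f=1/2$ and then $\theta_s=1/2$. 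The analogous elimination for Scheme \#4, using $4\theta_s-2=N(2\theta_f-1)$, leaves $(2N^2+N/2)(2\theta_f-1)=0$, again yielding $\theta_f=\theta_s=1/2$. The potential obstacle is ensuring that no other solutions exist: this is precisely where one uses the assumption $N>1$, which guarantees that the coefficients multiplying $(2\theta_f-1)$ are nonzero, so $\theta_f=1/2$ is forced. The two conditions together therefore single out the Crank--Nicolson pair uniquely.
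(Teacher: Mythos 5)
Your proposal is correct and follows essentially the same route as the paper: substitute $(A_f,A_s)=(\theta_f,\theta_s)$ into the Taylor expansions \eqref{eq:TaylorS3}, \eqref{eq:TaylorS4} (and the $\widetilde{\#3}$ coefficient from Remark \ref{remarkFSF}) to cancel the order-one term, then intersect with the vanishing of the $\dt^2$ coefficient of $\rho_3$, $\rho_4$ from Table \ref{tableau2rho} for the third claim. Your explicit elimination in part~3 is a welcome expansion of the paper's terser argument; the only nitpick is that the factors $2N+2$ and $2N^2+N/2$ are nonzero for every $N>0$, so the hypothesis $N>1$ is not actually what forces uniqueness there.
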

}

\begin{proof}
\begin{enumerate}
\item  Plugging relation \eqref{eq:LienAtheta} in the Taylor expansions
\eqref{eq:TaylorS3} and \eqref{eq:TaylorS4}, the result follows
by cancelling the terms of order 1.
\item The coefficient in front of $\dt^2$ in the asymptotic
error expansion is then $4\theta_s-2+1-2\theta_f=0$.
\item  For schemes of type \#3,
plugging the relation \eqref{eq:LienAtheta} in the Taylor expansion
\eqref{eq:TaylorS3} and cancelling the term of order 1 yields a link
between $\theta_f$ and $\theta_s$ which does not match the condition
of cancellation of the term of order 2 in $\rho_3(\dt)$
(see Table \ref{tableau2rho}) except when $(\theta_f,\theta_s)=(0.5,0.5)$.
The proof is very similar for schemes of type \#4.
\end{enumerate}
\end{proof}

\begin{remark}
\begin{enumerate}
\item Concerning the Strang-splitting methods involving $\theta$-schemes,
without subcycling (Scheme \#4), the one-parameter family of
schemes depends on $N$ through the equation $2N(1-2\theta_s)+2\theta_f-1=0$.
On the contrary, with subcycling (Scheme \#3), 
the one-parameter family is independant of $N$ (since the link
between $\theta_f$ and $\theta_s$ is $4\theta_f-2+1-2\theta_s=0$).
\item Once again, in addition to having more reasonable computational costs
and relaxing stability constraints, using subcycling techniques allows
to derive families of schemes involving explicit schemes and with
reasonable high A-order (2, in this example with a Strang composition method).
\end{enumerate}
\end{remark}





  



 


\subsection{Conclusion}
Let us remind the reader that the applications we have in mind
are by far more complicated than the system \eqref{eq:cas-simple-lineaire}.
However, they share with the system \eqref{eq:cas-simple-lineaire}
the property that they involve a fast equation for which an implicit scheme
is costly or hard to solve, thus implying the use of
an explicit scheme, inducing a stability constraint on the numerical time-step
$\dt$. In that case, the subcycling techniques are computationally less costly,
thus relevant.

We proved in this section that, in view of the aforementioned goal, we can
indeed build two schemes, one of type $\widetilde{\#3}$ (Strang with subcycling)
with $\theta_f=0$ (explicit) and $\theta_s=1/4$ (semi-implicit),
and one of type \#5 (weighted) with $\theta_s=\theta_f=0$ (explicit/explicit)
which are of A-order 2, even though they
are (locally) consistent of order 1 with \eqref{eq:cas-simple-lineaire}
and have a rate of convergence which approximates the exact rate at order 2.
Moreover, the coefficients $\theta_f$ and $\theta_s$ of these schemes
are independent of $N$.

We postpone the numerical illustration of these results to the study of a
nonlinear system in the following section.

\section{Numerical tests of the asymptotic error of splitting schemes applied to a nonlinear toy-model}
\label{sec:nonlinearanalysis}
In Section \ref{sec:nonlinearanalysis}, the second system that is analyzed
is nonlinear and reads
\begin{equation}
  \label{eq:cas-simple}
  \begin{cases}
    u'=-Nc(u-v)-N(u-v)^2\\
    v'=c(u-v)+(u-v)^2.
  \end{cases}
\end{equation} 
With the notations of Section \ref{sec:Intro}, this means
\begin{equation*}
X=\R^2,\quad W=\left(\begin{matrix}u\\ v\end{matrix}\right),\quad
{\mathsf f}_{\rm s} \left(\begin{matrix}u\\ v\end{matrix}\right) =
\left(\begin{matrix}0\\ c(u-v)+(u-v)^2\end{matrix}\right),\quad
\text{and}\quad
{\mathsf f}_{\rm f} \left(\begin{matrix}u\\ v\end{matrix}\right) =
\left(\begin{matrix} -c(u-v)-(u-v)^2\\ 0\end{matrix}\right).
\end{equation*}

\subsection{Analysis of the exact solutions of the system \eqref{eq:cas-simple}}

In this section, we investigate the longtime behavior of the two-scale
nonlinear system \eqref{eq:cas-simple}.
Let us first write this system in the form
\begin{equation}
\label{eq:cas-simple2}
  \left\{
  \begin{matrix}
    u' & = & -N(u-v) [c+(u-v)] \\
    v' & = & (u-v ) [c+(u-v)].\\
  \end{matrix}
  \right. 
\end{equation}

This way, we are able to derive the following
\begin{proposition}
\label{prop:AnalyseSystemeNL}
Let $(u^0,v^0)\in\R^2$ be given. The maximal solution starting at
$(u^0,v^0)$ lies on the straight line of equation $u+Nv=u^0+Nv^0$.
It is defined for all non-negative time if $u^0+c\geq v^0$
and it {ceases to exist} after a finite positive time
if $u^0+c< v^0$.
Moreover, if $u^0+c=v^0$ then the solution is constant, and if $u^0+c>v^0$
then the solution tends to the intersection of the two straight lines
of equations $u+Nv=u^0+Nv^0$ and $u=v$, {\it i.e. } to the point
of coordinates $(u^0+Nv^0)/(N+1)\times(1,1)$.
\end{proposition}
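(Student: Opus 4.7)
The plan is to reduce the two-dimensional nonlinear system to a single scalar autonomous ODE by exploiting two elementary invariants, and then to carry out a standard phase-line analysis.

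First I would observe that adding $N$ times the second equation to the first gives $u'+Nv'=0$, so the quantity $u+Nv$ is a first integral. This immediately yields the claim that the maximal solution starting at $(u^0,v^0)$ is confined to the affine line $\{u+Nv=u^0+Nv^0\}$, and it also reduces the longtime description of $(u,v)$ to the longtime description of a single scalar variable.

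Next I would introduce $w=u-v$ and compute $w'=u'-v'=-(N+1)w(c+w)$, which is a scalar logistic-type ODE with two equilibria, $w=0$ and $w=-c$. I would then perform the phase-line analysis: the sign of $-(N+1)w(c+w)$ shows that $w=0$ is attractive while $w=-c$ is repulsive, and that $w'>0$ on $(-c,0)$, $w'<0$ on $(0,+\infty)\cup(-\infty,-c)$. From this I would read off the following:
\begin{itemize}
\item if $w^0=0$ (equivalently $u^0=v^0$), then $w\equiv 0$, hence $(u,v)$ is constant; this case is subsumed in the convergence statement below since then $(u^0+Nv^0)/(N+1)=u^0=v^0$;
\item if $w^0=-c$ (equivalently $u^0+c=v^0$), then $w\equiv -c$, so $(u,v)$ is constant;
\item if $w^0>-c$ and $w^0\neq 0$ (equivalently $u^0+c>v^0$ and $u^0\neq v^0$), then $w$ remains bounded and monotone in the invariant interval containing $w^0$ (namely $(-c,0)$ or $(0,+\infty)$), so $w$ is defined for all $t\geq 0$ and $w(t)\to 0$;
\item if $w^0<-c$ (equivalently $u^0+c<v^0$), then $w$ is decreasing and bounded away from $-c$ from below, so $w'\leq -(N+1)w^0(c+w^0)<0$ as long as the solution exists, and a standard blow-up argument (e.g.\ separation of variables, which yields an explicit formula of the form $w(t)=-c\,(\text{something})/(1-K\mathrm{e}^{c(N+1)t})$ with a singularity at a finite positive time) shows that the solution ceases to exist after a finite positive time.
\end{itemize}

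Finally, I would combine the conservation law $u+Nv=u^0+Nv^0$ with the fact that $u-v=w\to 0$ in the convergent case to deduce that $(u(t),v(t))$ tends to the unique intersection of $\{u+Nv=u^0+Nv^0\}$ with $\{u=v\}$, namely the point $\tfrac{u^0+Nv^0}{N+1}(1,1)$. The only step that requires a small amount of care is the finite-time blow-up in the case $w^0<-c$; everything else is immediate from the scalar reduction, and even the blow-up can be made completely explicit by solving the separable ODE for $w$ in closed form, which I expect to be the cleanest route.
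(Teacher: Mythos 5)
Your proof is correct and follows essentially the same route as the paper: the reduction via the invariant $u+Nv$ and the scalar variable $w=u-v$ satisfying $w'=-(N+1)w(c+w)$ is exactly the change of variables $(X,Y)=(u+Nv,u-v)$ used in the paper. The only difference is cosmetic: the paper reads all four cases directly off the explicit solution $Y(t)=Y^0\mathrm{e}^{-c(N+1)t}/(1+(1-\mathrm{e}^{-c(N+1)t})Y^0/c)$, whereas you use a phase-line argument and invoke the explicit (separable) solution only for the finite-time blow-up when $w^0<-c$ — which is indeed the one step where the qualitative sign analysis alone does not suffice.
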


\begin{proof}
The linear change of variable $(X,Y)=(u+Nv,u-v)$ yields the
equivalent differential system
\begin{equation*}
\begin{cases}
    X'  =  0 ,\\
    Y'  =  -(N+1) Y (c+Y).
\end{cases}
\end{equation*}
The second equation of this system has for maximal solution starting at
$t=0$ in $Y^0\in\R$ the function
${Y(t) = Y^0 {\rm e}^{-c(N+1)t}/(1+(1-{\rm e}^{-c(N+1)t})Y^0/c)}$
defined as long as $-c<Y^0(1-{\rm e}^{-c(N+1)t})$.
The result on the existence time for the maximal solutions of \eqref{eq:cas-simple2} follows from this observation. Moreover, if $Y^0>0$, then
$Y(t)$ tends to $0$ when $t$ tends to $+\infty$. This proves the asymptotic
behavior of the corresponding maximal solutions.
\end{proof}

Hence, for the range of interest of initial values ($(u^0,v^0)$ such that
$u^0+c > v^0$), the qualitative behavior is the same for the linear
system \eqref{eq:cas-simple-lineaire} and for the nonlinear system
\eqref{eq:cas-simple2}: the solutions evolve on straight lines
of equation $u+Nv={C}${, where $C$ is a constant,} and converge
to an equilibrium point located on the line of equation $u=v$.
Therefore, we extend the Definition \ref{def:monique}
of the asymptotic error $\varepsilon^{\rm as}$ to 
this nonlinear case as well.

\subsection{Splitting schemes with or without subcycling for
the nonlinear problem \eqref{eq:cas-simple}}

In the following, we consider numerical splitting methods for the
nonlinear problem \eqref{eq:cas-simple} in the same way as for
the linear problem \eqref{eq:cas-simple-lineaire} in Section \ref{LS}:
\begin{itemize}
\item Scheme \#1 is a SF Lie-splitting method with subcycling,
\item Scheme \#2 is a SF Lie-splitting method without subcycling,
\item Scheme \#3 is a FSF Strang-splitting method with subcycling,
\item Scheme \#4 is a FSF Strang-splitting method without subcycling,
\item Scheme \#5 is a weighted splitting method with subcycling, and
\item Scheme \#6 is a weighted splitting method without subcycling.
\end{itemize}
Once again, we use $\theta$-schemes to integrate the split equations
numerically:
we chose $(\theta_f,\theta_s)\in[0,1]^2$ and define $\Phi_{f,\dt}$ and
$\Phi_{s,\dt}$ as follows. For the fast equation,
the first component $u^{n+1}$ of $\Phi_{f,\dt}(u^n,v^n)$ solves the equation in $X$
\begin{equation*}
  X-u^n=N\dt(1-\theta_f)\big(c(v^n-u^n)-(u^n-v^n)^2\big)
  +N\dt\theta_f\big(c(v^n-X)-(X-v^n)^2\big),
\end{equation*}
while its second one is its second argument $v^n$. For the slow equation,
the second component $v^{n+1}$ of $\Phi_{s,\dt}(u^{n+1},v^n)$ solves 
the equation in $X$
\begin{equation*}
  X-v^n=\dt(1-\theta_s)\big(c(u^{n+1}-v^n)+(u^{n+1}-v^n)^2\big)
  +\dt\theta_s\big(c(u^{n+1}-X)+(u^{n+1}-X)^2\big),
\end{equation*}
while its first component is its first argument $u^{n+1}$.

\subsection{Numerical examples of splitting methods for problem \eqref{eq:cas-simple}}

We run the six schemes with six different values of the couple
$(\theta_f,\theta_s)$. We sum up the results on the asymptotic order
in Table \ref{tableauNL} and  provide numerical results in Figure
\ref{fig:nonlinear01}.
These results were obtained with final time $T=5.0$, speed $c=1$,
factor $N=10$, initial datum $(u^0,v^0)=(5,1)$, so that, using
the analysis carried out in the proof of Proposition
\ref{prop:AnalyseSystemeNL},
the exact solution at final time is within a distance smaller than $10^{-20}$
of its asymptotic limit $15/11\times(1,1)^{\t}$.
We then used as an approximation of the asymptotic error the
difference between $W_{\rm ex}(5.0)$ and $W_{\rm num}^{n_{\rm end}}$
where $n_{\rm end}$ is such that $\dt \cdot n_{\rm end}=5.0$.

By Theorem \ref{th:ordrenonlineaire},
we know that the Lie-splitting schemes (Scheme \#1 and Scheme \#2)
are of classical order $1$ for any possible choice of $(\theta_f,\theta_s)$.
The first two columns of Table \ref{tableauNL} show that the asymptotic order
is also $1$ in these cases, except when $(\theta_f,\theta_s)=(1,0)$.
This is in accordance with the results obtained in
Proposition~\ref{Prop:LieCsomosTheta1} for the linear system
\eqref{eq:cas-simple-lineaire} since in this case, the A-order
of Schemes \#1 and \#2 is 1 except when $(\theta_f,\theta_s)=(1,0)$ and
the A-order is infinite (see Proposition \ref{Prop:LieCsomosTheta1}).
Theorem \ref{th:ordrenonlineaire} also implies that
the Strang-splitting scheme \#3 is at least of classical order $1$ with the choice
$(\theta_f,\theta_s)=(0,1)$ and the asymptotic orders collected in the middle of the third line of Table \ref{tableauNL} show that the numerical asymptotic order
is also $1$ in this case.
The same theorem also ensures that Scheme \#3 has order $2$ when applied
with $(\theta_f,\theta_s)=(1/2,1/2)$. The asymptotic orders displayed
in the middle of the fourth line of Table \ref{tableauNL} show that the
asymptotic order is also $2$ in this case. 
The last two lines are even more interesting:
for $(\theta_f,\theta_s)=(0,1/4)$ and $(\theta_f,\theta_s)=((N+1)/(2N),3/4)$,
the classical order of the Strang splitting method is, by Theorem
\ref{th:ordrenonlineaire} at least $1$. In the first case
$(\theta_f,\theta_s)=(0,1/4)$, the numerical results suggest that the subcycled
Scheme \#3 has A-order $2$ while the non-subcycled Scheme \#4 has
A-order $1$. We recall that, for these parameters, the Scheme \#3
was of A-order $2$ in the linear setting (see Remark \ref{remarqueordreplus}).
In the second case $(\theta_f,\theta_s)=((N+1)/(2N),3/4)$, the
same phenomenon occurs: Scheme \#3 has A-order $1$ while Scheme \#4 has
A-order $2$. We recall that these values of the parameters were
chosen in the linear setting in such a way that the Scheme \#4
has A-order $2$.
The weighted splitting scheme without subcycling (Scheme \#6) applied to
the nonlinear problem \eqref{eq:cas-simple2} is of numerical A-order 1
except when $\theta_s=\theta_f=1/2$ and the numerical A-order is 3
(see Table \ref{tableauNL}). This is in good accordance with results
for the linear case proved in Section \ref{LS} since, for the linear
problem \eqref{eq:cas-simple-lineaire}, we have
\begin{equation*}
  S_6(\dt) = N+\frac{1}{2}\,c \left( 2\theta_s-1-N(2\theta_f-1) \right)\dt
+\frac{1}{4}\,c^{2} \left( (1-2\theta_f)(1-N+2N\theta_f-2\theta_s)\right) \dt^2
+{\mathcal O}(\dt^3),
\end{equation*}
and the terms of order 1 and 2 in the Taylor expansion of $S_6(\dt)$
vanish for these values of $\theta_s$ and $\theta_f$.
The weighted splitting scheme with subcycling (Scheme \#5) applied
to the nonlinear problem \eqref{eq:cas-simple2} is
indeed of numerical A-order 2 in general when $\theta_f=\theta_s$,
and is of numerical A-order 1 in other cases.
The two relatively high values on the last 2 lines of the corresponding
row of Table \ref{tableauNL} are due to the fact that $\dt$ was not
small enough to reach the actual rate.
These results are in good accordance with the results proved for the linear
problem \eqref{eq:cas-simple-lineaire}
(see \eqref{eq:TaylorS5} and \eqref{eq:LienAtheta}).

Roughly speaking, a subcycled scheme (odd number) requires half as many
numerical computations as the corresponding not-subcycled scheme
(even number), since the computational ratio is of order $(N+1)/(2N)\sim 1/2$.
Therefore, for a given precision $\varepsilon>0$ to be achieved on the
asymptotic state,
the previous analysis suggests to use a subcycled method with high order.
For example, for the integration of the nonlinear problem
\eqref{eq:cas-simple2}, provided $T>0$ is chosen big enough,
the subcycled Scheme \#3, which has A-order 2 (and whose
coefficients $\theta_f$ and $\theta_s$ do not depend on the value of
$N$ (see Remark \ref{rem:Nunknown}),
will require ${\mathcal O}((N+1)\times T/\varepsilon^{1/2})$ computations,
while its not-subcycled analogue Scheme \#4, which has A-order 1, will require
${\mathcal O}(2N\times T/\varepsilon)$ computations.

\subsection{Conclusion} 

These examples suggest that, in this context, the A-order of a scheme
applied to the linear problem is the same as the A-order of the scheme
applied to the nonlinear problem.
This can be explained by the fact that the two problems
\eqref{eq:cas-simple-lineaire} and \eqref{eq:cas-simple} have the same
set of attractive equilibrium points (the straight line $u=v$),
they project the initial datum $(u^0,v^0)$
(chosen in an appropriate subset of the phase plane ($u^0+c>v^0$)) on the same
equilibrium point $(u^0+Nv^0)/(N+1)\times(1,1)$, and in the neighborhood
of this equilibrium point, $(u-v)^2 << |u-v|$.
In particular, these examples show that it is possible
to build in the nonlinear setting, as well in the linear setting,
splitting methods with asymptotic order greater than the classical order
of the schemes used for solving the split-equations. {We expect
that the A-order is the same for the linear and nonlinear problems,
at least for problems admitting a ``sufficiently attractive'' stationary state,
perhaps in terms of existence of a Lyapunov functional. We proposed a theoretical
framework in Section \ref{sec:fw}.
However, finding a theoretical framework which is not too abstract,
allows for rigorous proof 
(of the asymptotic order of the splitting methods with and without subcycling),
and includes sufficiently many interesting applications (and in particular PDEs
examples as in the next section) seems out of reach for the authors right now.}

{\begin{table}[!h]
  \begin{center}
    \begin{tabular}{|c|c|c|c|c|c|c|}
      \hline
      $(\theta_f,\theta_s)$   & Scheme \#1  &Scheme \#2 & Scheme \#3 & Scheme \#4 & Scheme \#5 & Scheme \#6 \\
      \hline
      $(1.0,0.0)$            & $0.8642$ & -- & $0.7700$ & $0.9787$ & $1.2941$ & $1.0001$\\
      $(0.0,0.0)$             & $0.8693$ & $1.0072$ & $1.4769$ & $1.0409$  & \underline{$1.9647$} & $1.0055$  \\
      $(0.0,1.0)$             & $0.8404$ & $1.0000$ & $1.1860$ & $1.0229$ & $0.8734$ & $1.0002$\\
      $(0.5,0.5)$             & $0.8534$ & $1.0000$ & \underline{$1.8313$} & \underline{$1.9984$} & \underline{$1.8888$} &\underline{$2.4817$}\\
      $(0.0,0.25)$            & $0.8617$ & $1.0053$ & \underline{$1.8674$} & $1.0354$ & $1.8373$ & $1.0042$\\
      $(\frac{N+1}{2N},0.75)$ & $0.8463$ & $0.9975$ & $1.3994$ & \underline{$1.9926$} & $1.8184$ & $0.9955$\\
      \hline
    \end{tabular}
  \end{center}
\caption{Asymptotic error for the 6 schemes for some values of
($\theta_f,\theta_s)$.
Figures are underlined when the method is of A-order at least 2.} 
\label{tableauNL}   
\end{table}}
\begin{figure}[!h]  
  \centering   
  \subfigure[]{
 \begin{tikzpicture}

 \begin{axis}[%
 width=3.82222222222222cm,
 height=2.80333333333333cm,
 scale only axis,
 xmin=-3.2,
 xmax=-1.2,
 ymin=-14,
 ymax=0,
 ylabel={$\log_{10}(\varepsilon_{\rm as})$},
 axis x line*=bottom,
 axis y line*=left
 ]
 \addplot [color=red,solid,forget plot]
   table[row sep=crcr]{-1.30102999566398	-0.552382201974175\\
 -1.60205999132796	-0.72886568052388\\
 -1.90308998699194	-0.955928246070231\\
 -2.20411998265592	-1.2185452105408\\
 -2.50514997831991	-1.50088757650633\\
 -2.80617997398389	-1.79290562548508\\
 -3.10720996964787	-2.08954304449394\\
 };
 \addplot [color=red,dashed,forget plot]
   table[row sep=crcr]{-1.30102999566398	-13.7892205911016\\
 -1.60205999132796	-12.1903836886112\\
 -1.90308998699194	-12.2728947699051\\
 -2.20411998265592	-10.9005017966681\\
 -2.50514997831991	-11.9266687401526\\
 -2.80617997398389	-9.68242556486888\\
 -3.10720996964787	-9.58648519293109\\
 };
 \addplot [color=blue,solid,forget plot]
   table[row sep=crcr]{-1.30102999566398	-1.19200819859512\\
 -1.60205999132796	-1.95811048169918\\
 -1.90308998699194	-2.24414329942003\\
 -2.20411998265592	-2.17484029621045\\
 -2.50514997831991	-2.35649789803871\\
 -2.80617997398389	-2.60730866108399\\
 -3.10720996964787	-2.8852416064269\\
 };
 \addplot [color=blue,dashed,forget plot]
   table[row sep=crcr]{-1.30102999566398	-1.49880634809607\\
 -1.60205999132796	-1.77857077963892\\
 -1.90308998699194	-2.06854240552661\\
 -2.20411998265592	-2.36393930267898\\
 -2.50514997831991	-2.6621274129253\\
 -2.80617997398389	-2.96173020544779\\
 -3.10720996964787	-3.26204644655944\\
 };
 \addplot [color=black,solid,forget plot]
   table[row sep=crcr]{-1.30102999566398	-0.387706142760986\\
 -1.60205999132796	-0.817259177902428\\
 -1.90308998699194	-1.25410673862392\\
 -2.20411998265592	-1.66710847435432\\
 -2.50514997831991	-2.04320471965376\\
 -2.80617997398389	-2.38822688971985\\
 -3.10720996964787	-2.71315344485222\\
 };
 \addplot [color=black,dashed,forget plot]
   table[row sep=crcr]{-1.30102999566398	-1.19158444469418\\
 -1.60205999132796	-1.49283529530765\\
 -1.90308998699194	-1.79393840313177\\
 -2.20411998265592	-2.09498980672939\\
 -2.50514997831991	-2.39602562492828\\
 -2.80617997398389	-2.69705709648672\\
 -3.10720996964787	-2.99808740817569\\
 };
 \end{axis}
 \end{tikzpicture}
}
 \subfigure[]{
 \begin{tikzpicture}

 \begin{axis}[%
 width=3.82222222222222cm,
 height=2.80333333333333cm,
 scale only axis,
 xmin=-3.2,
 xmax=-1.2,
 ymin=-4,
 ymax=0,
 axis x line*=bottom,
 axis y line*=left
 ]
 \addplot [color=red,solid,forget plot]
   table[row sep=crcr]{-1.30102999566398	-0.459851189384339\\
 -1.60205999132796	-0.628319380437902\\
 -1.90308998699194	-0.856517782205023\\
 -2.20411998265592	-1.12344493799844\\
 -2.50514997831991	-1.40931000860561\\
 -2.80617997398389	-1.70343823430501\\
 -3.10720996964787	-2.00121028847485\\
 };
 \addplot [color=red,dashed,forget plot]
   table[row sep=crcr]{-1.30102999566398	-0.917485144461885\\
 -1.60205999132796	-1.22578779973348\\
 -1.90308998699194	-1.53055435878329\\
 -2.20411998265592	-1.8334684904679\\
 -2.50514997831991	-2.13544357427202\\
 -2.80617997398389	-2.43694676306172\\
 -3.10720996964787	-2.73821350500683\\
 };
 \addplot [color=blue,solid,forget plot]
   table[row sep=crcr]{-1.30102999566398	-0.882474862449781\\
 -1.60205999132796	-1.3465861733012\\
 -1.90308998699194	-1.99413246165605\\
 -2.20411998265592	-3.06801778250322\\
 -2.50514997831991	-3.15277496849839\\
 -2.80617997398389	-3.19748825705331\\
 -3.10720996964787	-3.41193852531686\\
 };
 \addplot [color=blue,dashed,forget plot]
   table[row sep=crcr]{-1.30102999566398	-1.54647256273234\\
 -1.60205999132796	-1.89040862470988\\
 -1.90308998699194	-2.21254131002577\\
 -2.20411998265592	-2.52399770367845\\
 -2.50514997831991	-2.83020615886261\\
 -2.80617997398389	-3.13381633408893\\
 -3.10720996964787	-3.43613411572158\\
 };
 \addplot [color=black,solid,forget plot]
   table[row sep=crcr]{-1.30102999566398	-0.454474286127426\\
 -1.60205999132796	-0.968256361053079\\
 -1.90308998699194	-1.54364195605513\\
 -2.20411998265592	-2.14816645636776\\
 -2.50514997831991	-2.7580422329709\\
 -2.80617997398389	-3.36588521959601\\
 -3.10720996964787	-3.97130364798909\\
 };
 \addplot [color=black,dashed,forget plot]
   table[row sep=crcr]{-1.30102999566398	-1.26770687544553\\
 -1.60205999132796	-1.57421552563273\\
 -1.90308998699194	-1.87814866362423\\
 -2.20411998265592	-2.18065658859291\\
 -2.50514997831991	-2.48243072255409\\
 -2.80617997398389	-2.78383392569734\\
 -3.10720996964787	-3.08505079154388\\
 };
 \end{axis}
 \end{tikzpicture}%
}  
 \subfigure[]{
 \begin{tikzpicture}

 \begin{axis}[%
 width=3.82222222222222cm,
 height=2.80333333333333cm,
 scale only axis,
 xmin=-3.2,
 xmax=-1.2,
 ymin=-3.5,
 ymax=0,
 axis x line*=bottom,
 axis y line*=left
 ]
 \addplot [color=red,solid,forget plot]
   table[row sep=crcr]{-1.30102999566398	-0.440794885591603\\
 -1.60205999132796	-0.595374358347933\\
 -1.90308998699194	-0.809281990918945\\
 -2.20411998265592	-1.06526611383512\\
 -2.50514997831991	-1.34438327125202\\
 -2.80617997398389	-1.63480490350633\\
 -3.10720996964787	-1.93064369747154\\
 };
 \addplot [color=red,dashed,forget plot]
   table[row sep=crcr]{-1.30102999566398	-0.890877687325258\\
 -1.60205999132796	-1.19190768298584\\
 -1.90308998699194	-1.49293767863532\\
 -2.20411998265592	-1.7939676740673\\
 -2.50514997831991	-2.09499766885069\\
 -2.80617997398389	-2.39602764723562\\
 -3.10720996964787	-2.69705748478149\\
 };
 \addplot [color=blue,solid,forget plot]
   table[row sep=crcr]{-1.30102999566398	-0.735546925757686\\
 -1.60205999132796	-1.05774947213281\\
 -1.90308998699194	-1.43862298540207\\
 -2.20411998265592	-1.82388837712101\\
 -2.50514997831991	-2.18523732897041\\
 -2.80617997398389	-2.52215438669683\\
 -3.10720996964787	-2.8427218727151\\
 };
 \addplot [color=blue,dashed,forget plot]
   table[row sep=crcr]{-1.30102999566398	-1.40755653083382\\
 -1.60205999132796	-1.73276421897924\\
 -1.90308998699194	-2.04561745597023\\
 -2.20411998265592	-2.35247415613198\\
 -2.50514997831991	-2.6563945421038\\
 -2.80617997398389	-2.9588637434072\\
 -3.10720996964787	-3.26061133282256\\
 };
 \addplot [color=black,solid,forget plot]
   table[row sep=crcr]{-1.30102999566398	-0.880876589766861\\
 -1.60205999132796	-1.53597165696268\\
 -1.90308998699194	-2.45956477156951\\
 -2.20411998265592	-2.1087452049054\\
 -2.50514997831991	-2.25431331035774\\
 -2.80617997398389	-2.49271022133111\\
 -3.10720996964787	-2.76526540168023\\
 };
 \addplot [color=black,dashed,forget plot]
   table[row sep=crcr]{-1.30102999566398	-1.19135767347873\\
 -1.60205999132796	-1.49280038431052\\
 -1.90308998699194	-1.79393351927732\\
 -2.20411998265592	-2.09498915735167\\
 -2.50514997831991	-2.39602553768022\\
 -2.80617997398389	-2.69705708879654\\
 -3.10720996964787	-2.9980872106365\\
 };
 \end{axis}
 \end{tikzpicture}%

}\\
 \subfigure[]{
 \begin{tikzpicture}

 \begin{axis}[%
 width=3.82222222222222cm,
 height=2.80333333333333cm,
 scale only axis,
 xmin=-3.2,
 xmax=-1.2,
 xlabel={$\log_{10}(\delta t)$},
 ymin=-10,
 ymax=0,
 ylabel={$\log_{10}(\varepsilon_{\rm as})$},
 axis x line*=bottom,
 axis y line*=left
 ]
 \addplot [color=red,solid,forget plot]
   table[row sep=crcr]{-1.30102999566398	-0.487234028021833\\
 -1.60205999132796	-0.653296289385832\\
 -1.90308998699194	-0.874347351112273\\
 -2.20411998265592	-1.13405671305081\\
 -2.50514997831991	-1.41503944838577\\
 -2.80617997398389	-1.70640151660186\\
 -3.10720996964787	-2.00271511538662\\
 };
 \addplot [color=red,dashed,forget plot]
   table[row sep=crcr]{-1.30102999566398	-1.19190768299072\\
 -1.60205999132796	-1.49293767865505\\
 -1.90308998699194	-1.79396767432936\\
 -2.20411998265592	-2.09499766931465\\
 -2.50514997831991	-2.39602765570706\\
 -2.80617997398389	-2.69705760056586\\
 -3.10720996964787	-2.99808712976983\\
 };
 \addplot [color=blue,solid,forget plot]
   table[row sep=crcr]{-1.30102999566398	-0.886254420683424\\
 -1.60205999132796	-1.28844138821557\\
 -1.90308998699194	-1.79539151892307\\
 -2.20411998265592	-2.36327581404237\\
 -2.50514997831991	-2.95539059536133\\
 -2.80617997398389	-3.5548413785178\\
 -3.10720996964787	-4.15625045894753\\
 };
 \addplot [color=blue,dashed,forget plot]
   table[row sep=crcr]{-1.30102999566398	-2.40781069246027\\
 -1.60205999132796	-3.00703042209247\\
 -1.90308998699194	-3.60836138860682\\
 -2.20411998265592	-4.21023784257925\\
 -2.50514997831991	-4.81225027638874\\
 -2.80617997398389	-5.41432986487593\\
 -3.10720996964787	-6.01649801032371\\
 };
 \addplot [color=black,solid,forget plot]
   table[row sep=crcr]{-1.30102999566398	-0.599561145224976\\
 -1.60205999132796	-1.06343148371898\\
 -1.90308998699194	-1.59964496522918\\
 -2.20411998265592	-2.17806450872567\\
 -2.50514997831991	-2.77325442499088\\
 -2.80617997398389	-3.37351277993033\\
 -3.10720996964787	-3.97511480055253\\
 };
 \addplot [color=black,dashed,forget plot]
   table[row sep=crcr]{-1.30102999566398	-5.01116916961572\\
 -1.60205999132796	-6.15307560229286\\
 -1.90308998699194	-7.32496859516135\\
 -2.20411998265592	-8.51063281887384\\
 -2.50514997831991	-9.57008336474321\\
 -2.80617997398389	-9.52688426785064\\
 -3.10720996964787	-8.98628757740163\\
 };
 \end{axis}
 \end{tikzpicture}%

} 
\subfigure[]{
 \begin{tikzpicture}

 \begin{axis}[%
 width=3.82222222222222cm,
 height=2.80333333333333cm,
 scale only axis,
 xmin=-3.2,
 xmax=-1.2,
 xlabel={$\log_{10}(\delta t)$},
 ymin=-4.5,
 ymax=0,
 axis x line*=bottom,
 axis y line*=left
 ]
 \addplot [color=red,solid,forget plot]
   table[row sep=crcr]{-1.30102999566398	-0.454628030564337\\
 -1.60205999132796	-0.619354450780928\\
 -1.90308998699194	-0.843751881803734\\
 -2.20411998265592	-1.10781671180023\\
 -2.50514997831991	-1.39194273126039\\
 -2.80617997398389	-1.68512511068586\\
 -3.10720996964787	-1.98240659660636\\
 };
 \addplot [color=red,dashed,forget plot]
   table[row sep=crcr]{-1.30102999566398	-0.910558497212595\\
 -1.60205999132796	-1.21698278367246\\
 -1.90308998699194	-1.52079203560666\\
 -2.20411998265592	-1.82322562580012\\
 -2.50514997831991	-2.12496022451914\\
 -2.80617997398389	-2.42634307888896\\
 -3.10720996964787	-2.72754902529519\\
 };
 \addplot [color=blue,solid,forget plot]
   table[row sep=crcr]{-1.30102999566398	-0.836018097348837\\
 -1.60205999132796	-1.24775012173946\\
 -1.90308998699194	-1.76895880157889\\
 -2.20411998265592	-2.34859473896284\\
 -2.50514997831991	-2.94781621404867\\
 -2.80617997398389	-3.55102505647356\\
 -3.10720996964787	-4.15432887943651\\
 };
 \addplot [color=blue,dashed,forget plot]
   table[row sep=crcr]{-1.30102999566398	-1.50659931952066\\
 -1.60205999132796	-1.84487618167626\\
 -1.90308998699194	-2.16418749718209\\
 -2.20411998265592	-2.47424108865417\\
 -2.50514997831991	-2.77975060395114\\
 -2.80617997398389	-3.08301172488929\\
 -3.10720996964787	-3.38515220986941\\
 };
 \addplot [color=black,solid,forget plot]
   table[row sep=crcr]{-1.30102999566398	-0.548114139466529\\
 -1.60205999132796	-1.06784381367109\\
 -1.90308998699194	-1.69634375361685\\
 -2.20411998265592	-2.48338658173856\\
 -2.50514997831991	-3.97193914082816\\
 -2.80617997398389	-3.31256881623189\\
 -3.10720996964787	-3.45522822087819\\
 };
 \addplot [color=black,dashed,forget plot]
   table[row sep=crcr]{-1.30102999566398	-1.24689793932329\\
 -1.60205999132796	-1.55216441114452\\
 -1.90308998699194	-1.85540072138393\\
 -2.20411998265592	-2.15754606366737\\
 -2.50514997831991	-2.45913582241758\\
 -2.80617997398389	-2.76044597188496\\
 -3.10720996964787	-3.06161500720414\\
 };
 \end{axis}
 \end{tikzpicture}%
}
\subfigure[]{
 \begin{tikzpicture}

 \begin{axis}[%
 width=3.82222222222222cm,
 height=2.80333333333333cm,
 scale only axis,
 xmin=-3.2,
 xmax=-1.2,
 xlabel={$\log_{10}(\delta t)$},
 ymin=-4.5,
 ymax=0,
 axis x line*=bottom,
 axis y line*=left
 ]
 \addplot [color=red,solid,forget plot]
   table[row sep=crcr]{-1.30102999566398	-0.498389470401499\\
 -1.60205999132796	-0.668552864139368\\
 -1.90308998699194	-0.893139435548767\\
 -2.20411998265592	-1.15531594171713\\
 -2.50514997831991	-1.43776545413687\\
 -2.80617997398389	-1.72992359166745\\
 -3.10720996964787	-2.02665079066448\\
 };
 \addplot [color=red,dashed,forget plot]
   table[row sep=crcr]{-1.30102999566398	-1.25203634900066\\
 -1.60205999132796	-1.55478702722038\\
 -1.90308998699194	-1.85671471046011\\
 -2.20411998265592	-2.158202712931\\
 -2.50514997831991	-2.45946395830883\\
 -2.80617997398389	-2.7606100814028\\
 -3.10720996964787	-3.06169736484239\\
 };
 \addplot [color=blue,solid,forget plot]
   table[row sep=crcr]{-1.30102999566398	-0.952761164901008\\
 -1.60205999132796	-1.41560574013811\\
 -1.90308998699194	-2.07358660787378\\
 -2.20411998265592	-3.43975841252473\\
 -2.50514997831991	-3.04645111462782\\
 -2.80617997398389	-3.13876264941031\\
 -3.10720996964787	-3.36372600802077\\
 };
 \addplot [color=blue,dashed,forget plot]
   table[row sep=crcr]{-1.30102999566398	-2.70737078496654\\
 -1.60205999132796	-2.71084179626973\\
 -1.90308998699194	-2.91481221258841\\
 -2.20411998265592	-3.17430897745008\\
 -2.50514997831991	-3.45596563752542\\
 -2.80617997398389	-3.74762329745385\\
 -3.10720996964787	-4.04406975835916\\
 };
 \addplot [color=black,solid,forget plot]
   table[row sep=crcr]{-1.30102999566398	-0.504341622648382\\
 -1.60205999132796	-0.961202886770684\\
 -1.90308998699194	-1.45825085742789\\
 -2.20411998265592	-1.95231533687909\\
 -2.50514997831991	-2.40897082602943\\
 -2.80617997398389	-2.81733641664452\\
 -3.10720996964787	-3.18405717073827\\
 };
 \addplot [color=black,dashed,forget plot]
   table[row sep=crcr]{-1.30102999566398	-2.0535942959269\\
 -1.60205999132796	-2.35666803283793\\
 -1.90308998699194	-2.65856646628088\\
 -2.20411998265592	-2.95997230944742\\
 -2.50514997831991	-3.26117247851492\\
 -2.80617997398389	-3.56228348662676\\
 -3.10720996964787	-3.86335891846244\\
 };
 \end{axis}
 \end{tikzpicture}%

}
 \caption{Logarithm of the asymptotic error as a function of the logarithm
of the time step:
Scheme \#1 (solid red line), Scheme \#2 (dotted red line), Scheme \#3 (solid
blue line), Scheme \#4 (dotted blue line), Scheme \#5 (solid black line),
Scheme \#6 (dotted black line). $(\theta_f,\theta_s)=(1.0,0.0)$ (a),
$(\theta_f,\theta_s)=(0.0,0.0)$ (b), $(\theta_f,\theta_s)=(0.0,1.0)$ (c),
$(\theta_f,\theta_s)=(0.5,0.5)$ (d), $(\theta_f,\theta_s)=(0.0,0.25)$ (e)  and $(\theta_f,\theta_s)=((N+1)/(2N),0.75)$ (f).}
\label{fig:nonlinear01} 
\end{figure}
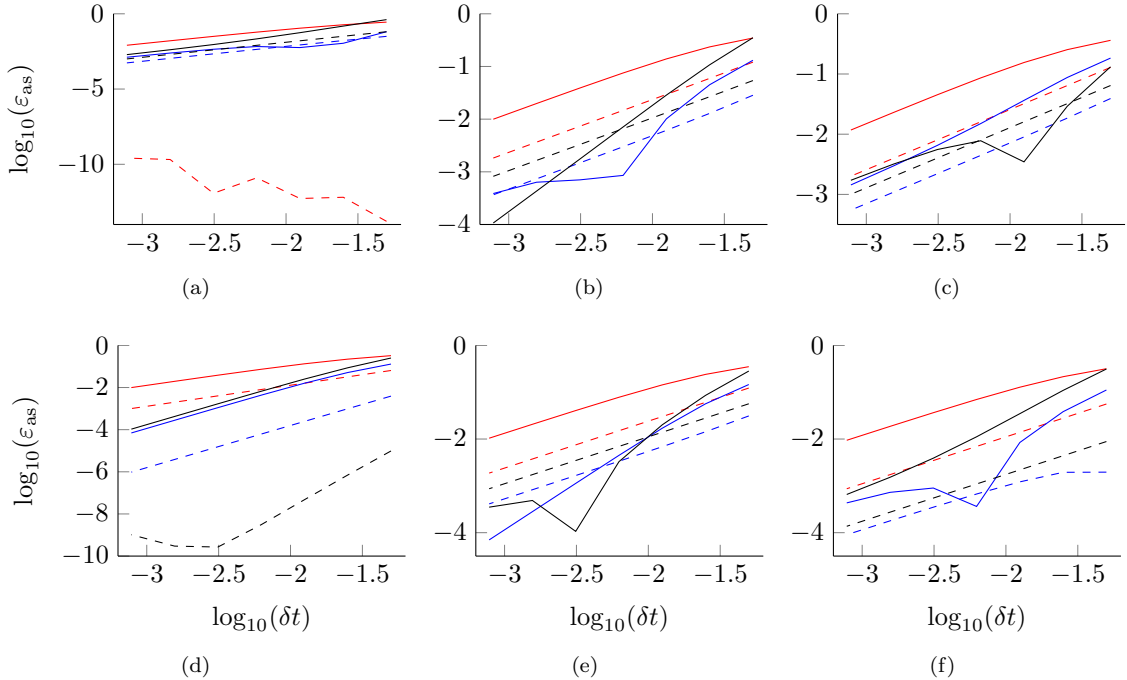

\section{Numerical analysis of the asymptotic error of splitting schemes applied to a coupled reaction-diffusion system}
\label{sec:reactiondiffusion}

{We now turn to the longtime behavior of a PDE toy-model :
a linear coupled reaction-diffusion system set over a finite space interval.
It has the property, if the boundary conditions are of homogeneous Dirichlet
type, that all its solutions asymptotically tend to zero in time,
with an exponential rate.
As we did for the linear ODE model in Section 2,
we study the approximated rate of convergence to $0$
for the solution of the problem with homogeneous Dirichlet boundary conditions
by a subcycled Lie SF method (Theorem \ref{decroissancenum}).
We then consider the non-homogeneous Dirichlet variant of the problem
and we estimate the accuracy of the asymptotic numerical state obtained with
a subcycled Lie SF method (Theorem \ref{Th:ApproxLieDirInHomo}).}

\subsection{The homogeneous Dirichlet problem}
\label{sec:homogeneousDirichlet}
 
\paragraph{The continuous problem}
This section aims at studying the behavior of time-splitting schemes
involving subcycling techniques for solving the following system
of partial differential equations
\begin{equation}
\label{systemechaleur0}
\left\{
\begin{array}{rcll}
\partial_t u & = &\nu_1 \Delta u + c_1 (v-u)\\
\partial_t v & = & \nu_2 \Delta v + c_2 (u-v) 
\end{array}\right. \qquad t>0,x\in(0,L)
,
\end{equation}

\noindent
with homogeneous Dirichlet boundary conditions at $x=0$ and $x=L$,
and given initial data $u^0$ and $v^0$ in an appropriate function space.
Here, $\Delta=\partial_x^2$ is the Laplace operator and $L>0$ is given.
Moreover, $\nu_1$ and $\nu_2$ are real positive diffusion parameters
and $c_1$ and $c_2$ are real positive reaction speed parameters.
We focus on the case where one of the equations in System \eqref{systemechaleur0}
is ``fast'' and the other is ``slow''. Moreover, we assume the ``speed''
ratios allow us to actually do subcycling. This means that
\begin{equation}
\label{ratio}
\frac{\nu_1}{\nu_2} = \frac{c_1}{c_2} = N\in\N^\star,
\end{equation}
and $N>>1$. Yet, we are not interested in the limit $N\to+\infty$.
Recall that one can expect to have similar results when only
the order of magnitude of $N$ is known (See Remark \ref{rem:Nunknown} for
the ODE system of Section \ref{sec:linearanalysis}), but we
assume that $N$ is exactly known via relation \eqref{ratio} to keep
the notations and the analysis simple.
Consequently, in accordance with Section \ref{sec:linearanalysis}, we will use
the notation $\nu=\nu_2$ and $c=c_2$.
In that case, the first equation in \eqref{systemechaleur0} is the ``fast'' one,
and the second one is the slow one since it reads
\begin{equation}
\label{systemechaleur}
\left\{
\begin{array}{rcll}
\partial_t u & = & N\nu \Delta u + N c (v-u)\\
\partial_t v & = & \nu \Delta v + c (u-v) 
\end{array}\right. \qquad t>0,x\in(0,L).
\end{equation}
Therefore, $u$ is referred to as the fast unknown and $v$ as the slow one.
With the notations introduced in Section \ref{sec:Intro},
we have
\begin{equation*}
X={\rm L}^2(0,L)^2,\quad W=\left(\begin{matrix}u\\ v\end{matrix}\right),\quad
{\mathsf f}_{\rm s} \left(\begin{matrix}u\\ v\end{matrix}\right) =
\left(\begin{matrix}0\\ \nu \Delta v + c(u-v)\end{matrix}\right),\quad
\text{and}\quad
{\mathsf f}_{\rm f} \left(\begin{matrix}u\\ v\end{matrix}\right) =
\left(\begin{matrix} \nu \Delta u + c(v-u)\\ 0\end{matrix}\right),
\end{equation*}
and the considered equations and the corresponding semigroups are linear.
Let us recall that we have the following
\begin{theorem}
\label{th:ExSol}
For all initial data $(u^0,v^0)\in {\rm L}^2(0,L)^2$,
System \eqref{systemechaleur}
has a unique solution $t\mapsto (u(t),v(t))$ in
$C^0([0,+\infty),{\rm L}^2(0,L)^2) \cap C^\infty((0,+\infty)\times[0,L],\R^2)$,
satisfying $(u,v)(0)=(u^0,v^0)$.
\end{theorem}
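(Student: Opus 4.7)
The plan is to reformulate \eqref{systemechaleur} as an abstract linear Cauchy problem on the Hilbert space $H=\mathrm{L}^2(0,L)^2$ and appeal to the theory of analytic semigroups. Define the unbounded operator
$\mathcal{A}_0:D(\mathcal{A}_0)\subset H\to H$ by
$\mathcal{A}_0(u,v)^{\t}=(N\nu\,\Delta u,\nu\,\Delta v)^{\t}$ with domain $D(\mathcal{A}_0)=(H^2(0,L)\cap H^1_0(0,L))^2$. This is (up to the positive diagonal scaling $\mathrm{diag}(N\nu,\nu)$) the direct sum of two Dirichlet Laplacians, hence self-adjoint, negative, and the generator of an analytic semigroup on $H$. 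The reaction coupling is encoded by the bounded operator $B\in\mathcal{L}(H)$ given by $B(u,v)^{\t}=(Nc(v-u),c(u-v))^{\t}$, so that the operator governing \eqref{systemechaleur} is $\mathcal{A}=\mathcal{A}_0+B$ with $D(\mathcal{A})=D(\mathcal{A}_0)$.

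Since $B$ is bounded on $H$, classical perturbation results for analytic semigroup generators (see, e.g., Pazy's monograph) imply that $\mathcal{A}$ generates an analytic semigroup $(\mathrm{e}^{t\mathcal{A}})_{t\geq 0}$ on $H$. The map $t\mapsto W(t)=\mathrm{e}^{t\mathcal{A}}W^0$ then furnishes the unique mild solution in $C^0([0,+\infty),H)$ with $W(0)=W^0$, which yields existence, uniqueness, and continuity in the stated function class.

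For the $C^\infty$ regularity on $(0,+\infty)\times[0,L]$, I would exploit two features of the analytic semigroup. First, analyticity in time: for $t>0$, $W(t)\in\bigcap_{k\geq 0}D(\mathcal{A}^k)$ and $t\mapsto W(t)$ is real-analytic with values in $H$ on $(0,+\infty)$; in particular, $\partial_t^k W$ exists in $H$ for every $k$ on $(0,+\infty)$. Second, elliptic regularity: writing
$\nu\,\Delta v=\partial_t v-c(u-v)$ and $N\nu\,\Delta u=\partial_t u-Nc(v-u)$, one bootstraps regularity in space by induction on the number of spatial derivatives, using that membership in $D(\mathcal{A}^k)$ already encodes the compatibility of homogeneous Dirichlet boundary conditions for all even orders of space derivatives. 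Sobolev embedding in dimension one then upgrades $H^{2k}(0,L)$ to $C^{2k-1}([0,L])$ for each $k$, and combining this with the analyticity in time yields joint smoothness on $(0,+\infty)\times[0,L]$.

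The main obstacle is ensuring smoothness \emph{up to the boundary}, since the initial datum is only in $\mathrm{L}^2$ and need not satisfy any compatibility condition with homogeneous Dirichlet data. This is overcome precisely by the fact that the homogeneous Dirichlet boundary conditions are built into $D(\mathcal{A}_0)$ and are therefore automatically propagated by the semigroup: for every $t>0$, $W(t)\in D(\mathcal{A}^k)$ already enforces the appropriate boundary conditions at $x=0$ and $x=L$, so the iterated elliptic regularity argument closes without any obstruction at the endpoints.
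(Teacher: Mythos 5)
Your proof is correct, but it follows a genuinely different route from the paper's. The paper diagonalizes the problem explicitly: it expands $u$ and $v$ in the sine basis $\sin(k\pi x/L)$ that diagonalizes the Dirichlet Laplacian on $(0,L)$, observes that the full system then block-diagonalizes into a sequence of $2\times 2$ constant-coefficient ODE systems with matrices $M_k$ whose eigenvalues are real, negative and grow like $-N\nu k^2\pi^2/L^2$ and $-\nu k^2\pi^2/L^2$, and deduces existence, uniqueness and instantaneous smoothing from the resulting super-exponential decay of the Fourier coefficients for $t>0$. You instead invoke the abstract machinery: the vector Dirichlet Laplacian $\mathcal{A}_0$ generates an analytic semigroup, the constant coupling $B$ is a bounded perturbation, so $\mathcal{A}=\mathcal{A}_0+B$ still generates an analytic semigroup, and regularity for $t>0$ comes from $W(t)\in\bigcap_k D(\mathcal{A}^k)$ plus elliptic bootstrapping and the one-dimensional Sobolev embedding. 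Both arguments are sound. The paper's computation buys explicit mode-by-mode decay rates that are reused downstream (Theorem \ref{th:SolEx} and the spectral analysis of the schemes), and it works here precisely because the two diffusion operators are scalar multiples of the same Laplacian and the coupling is constant, so a common eigenbasis exists. Your argument is more robust: it would survive non-commuting diffusions, variable coefficients, or higher space dimension, at the price of quoting perturbation theory for analytic semigroups. One small point you glide over: the identification $D(\mathcal{A}^k)=D(\mathcal{A}_0^k)$ used in your bootstrap requires checking that $B$ maps $D(\mathcal{A}_0^j)$ into itself for each $j$; this does hold here because $B$ acts pointwise by a constant matrix, hence preserves both the Sobolev regularity and the homogeneous Dirichlet traces of $u$, $v$ and of all their iterated Laplacians, but it is worth stating since it is where the constancy of the coupling coefficients enters.
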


\begin{proof}
If one looks for solutions of the form
\begin{equation*}
  u(t,x)=\sum_{k=1}^{+\infty} \alpha_k(t)\sin\left(k\pi x/L\right)
  \quad\text{and}\quad
  v(t,x)=\sum_{k=1}^{+\infty} \beta_k(t)\sin\left(k\pi x/L\right),
\end{equation*}
then the coefficients satisfy the differential systems
\begin{equation*}
      \alpha_k' (t)  =  -N\left(c+\nu \frac{k^2\pi^2}{L^2}\right) \alpha_k (t)
        + Nc \beta_k(t),\qquad
      \beta_k' (t)  =   c \alpha_k(t) 
        -\left(c+\nu \frac{k^2\pi^2}{L^2}\right) \beta_k (t),       
\end{equation*}
and the eigenvalues $\lambda_k$ and $\mu_k$ of the matrices
$M_k=\begin{pmatrix}
 -N\left(c+\nu \frac{k^2\pi^2}{L^2}\right) &  + Nc  \\
+ c & -\left(c+\nu \frac{k^2\pi^2}{L^2}\right) \\
\end{pmatrix}$ are both real, negative and satisfy, when $k$ tends to $+\infty$,
\begin{equation*}
  \lambda_k \sim -N \nu \frac{k^2\pi^2}{L^2}
  \quad\text{and}\quad
  \mu_k \sim - \nu \frac{k^2\pi^2}{L^2}.
\end{equation*}
Existence and uniqueness of the solution of the Cauchy problem in the functional
space follow.
\end{proof}

The following theorem deals with the asymptotic behavior of the solutions of
System \eqref{systemechaleur}:
\begin{theorem}
\label{th:SolEx}
For all solutions $(u,v)$ of System \eqref{systemechaleur} and all $t\geq 0$,
we have
\begin{equation}\label{eq:taux}
\int_0^L (|u|^2+N|v|^2)(t) \dd x
\leq \Big(\int_0^L (|u|^2+N|v|^2)(0) \dd x\Big) {\rm e}^{-\frac{2\pi^2\nu}{L^2} t}.
\end{equation}
\end{theorem}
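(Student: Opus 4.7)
The natural approach is an energy estimate using the weighted quantity
\[
E(t) = \int_0^L \bigl(|u(t,x)|^2 + N|v(t,x)|^2\bigr)\,\mathrm{d}x,
\]
which is precisely the left-hand side of \eqref{eq:taux}. The weights $1$ and $N$ are chosen to symmetrize the reaction coupling: on multiplying the first equation of \eqref{systemechaleur} by $u$ and the second by $Nv$, the cross-terms $Ncuv$ appear with the same coefficient, so that after summation they assemble into a perfect square.

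First I would differentiate $E$ in time, using Theorem \ref{th:ExSol} to ensure the solution is smooth enough to justify the computation. Substituting the PDEs gives two types of contributions: the diffusive terms $2N\nu\int_0^L u\Delta u\,\mathrm{d}x$ and $2N\nu\int_0^L v\Delta v\,\mathrm{d}x$, and the reaction terms $2Nc\int_0^L[u(v-u)+v(u-v)]\,\mathrm{d}x$. For the diffusive part, an integration by parts produces $-\int_0^L|\partial_x u|^2\,\mathrm{d}x$ and $-\int_0^L|\partial_x v|^2\,\mathrm{d}x$, the boundary contributions vanishing because of the homogeneous Dirichlet conditions. For the reaction part, a direct expansion shows that $u(v-u)+v(u-v) = -(u-v)^2 \le 0$. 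The resulting identity reads
\[
\frac{\mathrm{d}}{\mathrm{d}t}E(t) = -2N\nu\int_0^L\!\bigl(|\partial_x u|^2+|\partial_x v|^2\bigr)\,\mathrm{d}x - 2Nc\int_0^L (u-v)^2\,\mathrm{d}x.
\]

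Next I would discard the (non-positive) reaction term and apply the Poincar\'e inequality on $H^1_0(0,L)$, namely $\int_0^L|\partial_x w|^2\,\mathrm{d}x \ge (\pi^2/L^2)\int_0^L|w|^2\,\mathrm{d}x$, to both $u$ and $v$. This yields
\[
\frac{\mathrm{d}}{\mathrm{d}t}E(t) \le -\frac{2N\nu\pi^2}{L^2}\int_0^L\!\bigl(|u|^2+|v|^2\bigr)\,\mathrm{d}x.
\]
The remaining and only subtle step is to convert the unweighted $L^2$ sum on the right back into the weighted energy $E(t)$. Since $N\ge 1$, one has $N|u|^2 + N|v|^2 \ge |u|^2 + N|v|^2$ pointwise; integrating gives $N\int_0^L(|u|^2+|v|^2)\,\mathrm{d}x \ge E(t)$, hence
\[
\frac{\mathrm{d}}{\mathrm{d}t}E(t) \le -\frac{2\nu\pi^2}{L^2}E(t).
\]
Gr\"onwall's inequality (or direct integration of this differential inequality) then delivers the announced bound $E(t)\le E(0)\,\mathrm{e}^{-2\nu\pi^2 t/L^2}$.

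I expect the computation to be essentially routine; the only point that requires a little care is the \emph{choice of the weight} $N$ in the energy, which is what makes the reaction cross-terms collapse into a non-positive square, and the companion observation that the same $N$ also makes the Poincar\'e step compatible with the weighted norm (thanks to $N\ge 1$). Without this weighting, the fast/slow asymmetry of \eqref{systemechaleur} would prevent a straightforward $L^2$-type estimate. Notice finally that the rate $2\nu\pi^2/L^2$ obtained this way is the \emph{slow} decay rate, consistent with the heuristic that the fastest-decaying mode is controlled by the slow equation.
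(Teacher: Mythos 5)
Your proposal is correct and follows essentially the same route as the paper's proof: the weighted energy $\int_0^L(|u|^2+N|v|^2)\,\dd x$, integration by parts with the homogeneous Dirichlet conditions, the collapse of the reaction terms into $-Nc\int_0^L(u-v)^2\,\dd x$, Poincar\'e's inequality, and the use of $N\geq 1$ to recover the weighted energy on the right-hand side before applying Gr\"onwall. The only difference is cosmetic (you drop the factor $1/2$ in the energy), so nothing further is needed.
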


\begin{proof}
Let $(u,v)$ be a smooth solution of \eqref{systemechaleur}. We compute
\begin{align*}
\left(\frac{\dd}{\dd t} \frac{1}{2} \int_0^L (|u|^2+N|v|^2) \dd x\right)(t)
& =  N\nu \int_0^L u(t)\Delta u(t) + N\nu \int_0^L v(t)\Delta v(t)
+Nc\int_0^L (u(v-u)+v(u-v))(t) \\
& =  -N\nu \int_0^L |\nabla u(t)|^2 -\nu \int_0^LN |\nabla v(t)|^2
-Nc\int_0^L |u(t)-v(t)|^2 \\
& \leq  - \frac{2 \pi^2 \nu}{L^2 } \frac{1}{2}
\int_0^L (|u(t)|^2+N|v(t)|^2) \dd x,
\end{align*}
\noindent
using that $N\geq 1$ and  Poincar\'e's inequality.
\end{proof}

The goal of the next paragraphs is to show how this exponential convergence
to $0$ in ${\rm L}^2(0,L)$ is reproduced by splitting schemes with
(or without) subcycling.

\paragraph{The space discretization}

In the following, we will use the classical finite-difference discretization
of minus the Laplace operator,
using the symmetric tridiagonal $J\times J$ matrix
$A =toeplitz(-1,2,-1,0)$
where $J\in\N^\star$ and $\dx =L/(J+1)$.
We note for all $i\in\{0,\dots,J+1\}$, $x_i=i\cdot\dx $
and
$U=(u_1,\dots,u_J)^{\t}$ will be the solution of the discretized problem.
Let us recall that the eigenvalues and associated eigenvectors of $A$ are, for  $1\leq j \leq J$,
\begin{equation}
\label{eq:valeurspropresdeA}
\left(\lambda_j = 4
\sin^2\Big(\frac{j\pi}{2(J+1)}\Big),\big(\sin(1j\pi/(J+1)),\sin(2j\pi/(J+1)),\dots,\sin(Jj\pi/(J+1))\big)^{\t}\right).
\end{equation}
In the following, we denote by
\begin{equation}
\label{diagA}
A=ZDZ^{-1},
\end{equation}
the corresponding diagonalization of $A$.
We endow $\R^J$ with the classical Euclidian norm
\begin{equation*}
  \forall (u_1,\dots,u_J)^{\t}\in\R^J,\qquad \| (u_1,\dots,u_J)^{\t} \|_2
:=\sqrt{\frac{1}{J+1}\sum_{i=0}^J|u_i|^2}
=\sqrt{\frac{\dx}{L}\sum_{i=0}^J|u_i|^2},
\end{equation*}
with the convention that $u_0=0$ so that the norm is consistant
with the rectangle quadrature method and homogeneous
Dirichlet boundary conditions.
We use a similar definition for the Euclidian norm on $\R^J\times\R^J$,
which we also denote by $\|\cdot\|_2$.
We use the induced norms on the corresponding algebras of square matrices
which we denote by $\normetriple{\cdot}_2$.

\paragraph{The time discretization}

Assume $\dt>0$ is given.
The methods we have in mind all share the same basic idea: we discretize
in time separately the spatially-discretized versions
of both equations of System \eqref{systemechaleur}.
We consider $(p,p',q,q')\in(\N^*)^4$ such that
\begin{equation}\label{tempsphysique}
  \dfrac{q'}{q}=\dfrac{p'}{Np}.
\end{equation}
The ``fast'' one is discretized on an interval of length $\dt/(Np)$
and we denote by $\Phi_{f,\dt/(Np)}$ its numerical flow.
We iterate this method $p'$ times.
The ``slow'' one is discretized on an interval of length $\dt/q$
and we denote by $\Phi_{s,\dt/q}$ its numerical flow. We iterate this
method $q'$ times.
Then, we compute numerical flows using splitting methods and subcycling
by considering numerical flows such as
\begin{equation}
\label{LieSF}
\Psi_{{\rm Lie},\dt} = \Phi_{s,\dt} \circ \Phi_{f,\dt/N}^N,
\end{equation}
corresponding to $(p,p',q,q')=(1,N,1,1)$.
As we did in Section \ref{sec:linearanalysis} and in Section
\ref{sec:nonlinearanalysis}, we consider $\theta$-schemes
for the solution of the slow and fast equations.
We choose two parameters $(\theta_f,\theta_s)\in[0,1]^2$.
The numerical integrators involved in the splitting scheme therefore
read:
\begin{equation}\label{thetaschemeDHfast}
\Phi_{f,\dt/N}(u^n,v^n)=
\left[
\left(I+\theta_f\dt \left(c I+\nu\frac{1}{(\dx )^2} A\right)\right)^{-1}
\left(\left(I-(1-\theta_f)\dt \left(c I+\nu\frac{1}{(\dx )^2} A\right)\right)
u^n
+ c \dt v^n\right)
\, , \,
v^n
\right],
\end{equation}
\noindent
and
\begin{equation}\label{thetaschemeDHslow}
\Phi_{s,\dt}(u^n,v^n)=
\left[
u^n
\, , \,
\left(I+\theta_s\dt \left(c I+\nu\frac{1}{(\dx )^2} A\right)\right)^{-1}
\left(\left(I-(1-\theta_s)\dt \left(c I+\nu\frac{1}{(\dx )^2} A\right)\right)
v^n
+ c \dt u^n\right)
\right],
\end{equation}
where $I$ stands for the identity matrix.
This way, a stability condition reads
\begin{equation}
\label{stabcond1}
\dt \leq \frac{1}{c+4\nu/(\dx )^2}.
\end{equation}
Note also that the stability condition \eqref{stabcond1} of the scheme
is actually independent of $N$, and this is a very interesting
feature of splitting schemes involving subcycling.
Let us define for $z\in\{s,f\}$,
\begin{equation*}
B_z(\dt):=I-(1-\theta_z)\dt \left(c I+\nu\frac{1}{(\dx )^2} A\right)
\qquad{\rm and }\qquad
C_z(\dt):=I+\theta_z\dt \left(c I+\nu\frac{1}{(\dx )^2} A\right).
\end{equation*}
For the sake of simplicity, we omit the dependence in $\dt$ of $C$ and $B$,
thus noting
$(B,C)_s=(B,C)_s(\dt/q)$ and $(B,C)_f=(B,C)_f(\dt/p)$.
Since they are polynomials in $A$, the matrices
$I$, $C_s$, $C_f$, $B_s$, $B_f$, $C_s^{-1}$, $C_f^{-1}$ and $A$ do
commute for all values (distinct or not) of $\dt$. The matrices of the linear mappings $\Phi_{s,\dt/q}$ and
$\Phi_{f,\dt/(Np)}$ in the canonical basis of $\R^{2J}$ read
respectively 
\begin{equation}\label{formematDH}
M_{s}(\dt/q)=
\begin{pmatrix}
I & 0 \\
c\frac{\dt}{q} C_s^{-1} & B_s C_s^{-1}\\
\end{pmatrix}
\qquad{\rm and}\qquad
M_f(\dt/(Np))=
\begin{pmatrix}
B_f C_f^{-1} & c \frac{\dt}{p} C_f^{-1} \\
0 & I \\
\end{pmatrix}.
\end{equation}
 Let us define
$\Sigma_{z,m}=\sum_{k=0}^{m-1}(C_z^{-1}B_z)^k$ for $m\geq 1$ and $z\in\{s,f\}$.
\noindent
Therefore, the matrix of $\Phi_{f,\dt/(Np)}^{p'}$  reads
\begin{equation*}
M_f(\dt/(Np))^{p'}=
\begin{pmatrix}
(B_fC_f^{-1})^{p'} & c\frac{\dt}{p} C_f^{-1} \Sigma_{f,p'}  \\
0 & I 
\end{pmatrix}.
\end{equation*}

\noindent
Recalling \eqref{tempsphysique}, we define
$\Psi_{\dt,p,p',q,q'}=\Phi_{s,\dt/q}^{q'}\circ\Phi_{f,\dt/(Np)}^{p'}$ the matrix of
which reads
\begin{equation}
\label{eq:formematricePsi}
\begin{pmatrix}
(B_fC^{-1}_f)^{p'} & c\frac{\dt}{p} C_f^{-1} \Sigma_{f,p'} \\
c \frac{\dt}{q} C_s^{-1}(B_fC_f^{-1})^{p'}\Sigma_{s,q'} & (B_sC_s^{-1})^{q'} + c^2\frac{\dt^2}{pq} C_s^{-1}C_f^{-1}\Sigma_{s,q'}\Sigma_{f,p'}\\
\end{pmatrix}
.
\end{equation}
In particular, if $q=q'=p=1$ and $p'=N$, $\Psi_{\dt,p,p',q,q'}=\Psi_{{\rm Lie},\dt}$.

\paragraph{Rate of convergence
for the subcycled SF Lie-splitting scheme}

The central result of this subsection is the following analysis
of the rate of convergence to 0 of the numerical solutions of
Problem \eqref{systemechaleur}:

\begin{theorem}\label{decroissancenum}
Let $c,\nu>0$, $N\geq 2$. {Let us consider a subcycled SF Lie
method based on $\theta$-schemes defined by \eqref{LieSF},
\eqref{thetaschemeDHslow} and \eqref{thetaschemeDHfast}.}
Assume $J\in\N^\star$ is given.
There exists $C,\gamma,h>0$ such that for all $T>0$, all $U^0,V^0\in\R^J$,
all $\dt\in(0,h)$ and all $n\in \N$ with $n\dt\leq T$, we have
\begin{equation}
\label{eq:expdecay}
\|\Psi_{{\rm Lie},\dt}^n (U^0,V^0)\|_{2} \leq C {\rm e}^{-\gamma n\dt} 
\|(U^0,V^0)\|_{2}.
\end{equation}
One can impose $\gamma\geq N\nu\lambda_1/((N+1)(\dx )^2)$
in this case, provided $h$ is small enough.\\
The exact decay rate $\nu\pi^2/L^2$ 
from Theorem \ref{th:SolEx} \eqref{eq:taux} is of the same order as
the asymptotic numerical one $N\nu\pi^2/(L^2(N+1))$.
\end{theorem}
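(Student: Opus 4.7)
Since $A$ is symmetric, the spectral theorem provides an orthonormal basis $(z_j)_{1\leq j\leq J}$ of $\R^J$ diagonalizing $A$ with the eigenvalues $\lambda_j$ given by \eqref{eq:valeurspropresdeA}. All the matrices appearing in \eqref{eq:formematricePsi} are polynomials in $A$, hence they commute pairwise and are simultaneously diagonalized in $(z_j)$; after the change of basis $U=\sum_j\tilde u_j z_j$, $V=\sum_j\tilde v_j z_j$, the propagator $\Psi_{{\rm Lie},\dt}$ is unitarily equivalent to a block-diagonal operator whose $j$-th block $M_j\in\R^{2\times 2}$ is obtained by substituting the eigenvalue $\lambda_j$ for the operator $A$ in \eqref{eq:formematricePsi}. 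Setting $\tau_j:=c+\nu\lambda_j/(\dx)^2$, $\kappa_j:=c/\tau_j\in(0,1)$, $\alpha_j:=(1-(1-\theta_f)\dt\tau_j)/(1+\theta_f\dt\tau_j)$, $\beta_j:=(1-(1-\theta_s)\dt\tau_j)/(1+\theta_s\dt\tau_j)$, $a_j:=\alpha_j^N$, and using the identities $c\dt/c_{f,j}=\kappa_j(1-\alpha_j)$, $c\dt/c_{s,j}=\kappa_j(1-\beta_j)$, one finds
\begin{equation*}
M_j=\begin{pmatrix} a_j & \kappa_j(1-a_j)\\ \kappa_j(1-\beta_j)\,a_j & \beta_j+\kappa_j^2(1-\beta_j)(1-a_j)\end{pmatrix}.
\end{equation*}
Under the stability condition \eqref{stabcond1}, $\alpha_j,\beta_j\in(0,1)$ for every $j$.

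To bound the iterates of each $M_j$, I would equip $\R^2$ with the weighted inner product corresponding to $W:=\mathrm{diag}(1,N)$ (the discrete analog of the continuous Lyapunov functional used in the proof of Theorem~\ref{th:SolEx}) and expand $M_j^{\t}WM_j-W$ to leading order in $\dt$. Writing $M_j=I+\dt\tau_j L_j+O(\dt^2)$ with $L_j:=\begin{pmatrix} -N & N\kappa_j\\ \kappa_j & -1\end{pmatrix}$, a direct computation yields
\begin{equation*}
L_j^{\t}W+WL_j=2N\begin{pmatrix} -1 & \kappa_j\\ \kappa_j & -1\end{pmatrix},
\end{equation*}
whose largest eigenvalue is $-2N(1-\kappa_j)$. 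Hence $M_j^{\t}WM_j-W\preceq -2N(1-\kappa_j)\dt\tau_j\,I+O(\dt^2)$, and using $W\preceq NI$ together with the identity $(1-\kappa_j)\tau_j=\nu\lambda_j/(\dx)^2$, one obtains
\begin{equation*}
M_j^{\t}WM_j\preceq\bigl(1-2\nu\lambda_j\dt/(\dx)^2+O(\dt^2)\bigr)W.
\end{equation*}
Since $j\mapsto\lambda_j$ is strictly increasing, the worst leading rate is $\nu\lambda_1/(\dx)^2$, which is strictly larger than $\gamma:=N\nu\lambda_1/((N+1)(\dx)^2)$, leaving a positive margin $\nu\lambda_1/((N+1)(\dx)^2)$ into which the $O(\dt^2)$ correction can be absorbed by taking $h$ small enough; this yields $\|M_j\vec w\|_W\leq e^{-\gamma\dt}\|\vec w\|_W$ uniformly in $j$ and in $\dt\in(0,h)$.

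Iterating and invoking Parseval's identity in the orthonormal basis $(z_j)$ transfers the mode-wise contraction to $\|\Psi_{{\rm Lie},\dt}^n(U^0,V^0)\|_W\leq e^{-\gamma n\dt}\|(U^0,V^0)\|_W$ on the full space. The elementary norm equivalence $\|\cdot\|_2\leq\|\cdot\|_W\leq\sqrt{N}\|\cdot\|_2$ then produces \eqref{eq:expdecay} with constant $C=\sqrt{N}$. The announced comparison with the continuous decay rate of Theorem~\ref{th:SolEx} follows from $\nu\lambda_1/(\dx)^2\to \nu\pi^2/L^2$ as $J\to\infty$, which gives the stated numerical rate $N\nu\pi^2/((N+1)L^2)$.

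\emph{The main obstacle} is the uniform-in-$j$ control of the $O(\dt^2)$ correction in the Lyapunov estimate: its implicit constant involves higher powers of $\tau_j$, which are bounded only under \eqref{stabcond1} through $\tau_j\leq c+4\nu/(\dx)^2$, and this is why the parameter $h$ in the theorem must depend on the discretization. Crucially, the weight $N$ in $W$ is not arbitrary: it mirrors the continuous Lyapunov structure of Theorem~\ref{th:SolEx} and, through the mismatch $W\preceq NI$ used in the final step of the estimate, produces the characteristic factor $N/(N+1)$ in the rate announced by the theorem.
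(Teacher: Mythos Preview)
Your argument is correct and takes a genuinely different route from the paper. The paper proceeds by an explicit spectral analysis: after the same block-diagonalization, it writes down the characteristic polynomial of each $2\times 2$ block, checks that the discriminant is strictly positive (so the eigenvalues $\tau^\pm(\mu)$ are real and distinct), and then proves a monotonicity lemma showing that $\mu\mapsto\tau^+(\mu)$ is non-increasing on $(c,c+4\nu/\dx^2)$. This lemma is the key device: it identifies the spectral radius of $\Psi_{{\rm Lie},\dt}$ as $\tau^+(\mu_1)$ \emph{before} any Taylor expansion, so the asymptotics in $\dt$ need only be carried out at the single value $\mu_1$. A mean-value argument on the explicit formula for $\gamma_0$ then yields the lower bound $N\nu\lambda_1/((N+1)\dx^2)$.

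Your Lyapunov approach with $W=\mathrm{diag}(1,N)$ is the discrete shadow of the continuous energy estimate in Theorem~\ref{th:SolEx}, and it has two advantages: it bypasses the monotonicity lemma entirely, and it delivers an explicit constant $C=\sqrt{N}$ in \eqref{eq:expdecay} (the paper's spectral-radius argument leaves the passage from spectral radius to operator norm implicit). The price you pay is exactly the obstacle you flag: the $O(\dt^2)$ remainder in $M_j^{\t}WM_j-W$ carries powers of $\tau_j$ and must be absorbed uniformly in $j$, which forces $h$ to scale like $\CFL(J)$; the paper avoids this because the expansion is done only at $\mu_1$. One minor point: your closing remark attributes the factor $N/(N+1)$ to the inequality $W\preceq NI$, but in fact your leading rate is already $\nu\lambda_1/\dx^2$ after that step, and the factor $N/(N+1)$ enters only because you deliberately choose $\gamma$ below this to create the margin. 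The theorem's bound is thus slightly pessimistic relative to what either proof actually yields.
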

\begin{proof}
We perform a numerical analysis of the linear splitting method
$\Psi_{{\rm Lie},\dt}^n$. We determine its eigenvalues, show that they are real positive and control the
biggest one to obtain the exponential decay stated in \eqref{eq:expdecay}.
Let $(p,p',q,q')$ be positive integers satisfying \eqref{tempsphysique}.
Denoting by $\mathcal Z$ the matrix (see \eqref{diagA})
\begin{equation}
\label{MatriceP}
\mathcal Z = 
\begin{pmatrix}
Z & 0 \\
0& Z
\end{pmatrix},
\end{equation}

\noindent
we obtain that the matrix $\mathcal{D}:={\mathcal Z}^{-1} \Psi_{\dt,p,p',q,q'} {\mathcal Z}$ is exactly the same as that of \eqref{eq:formematricePsi} where
$A$ is replaced with $D$ in the definition of the matrices
$B_f,B_s,C_f$ and $C_s$. In particular, it consists in four square blocks,
each of size $J \times J$, each of which is diagonal. 
We infer that all the eigenvalues of $\Psi_{\dt,p,p',q,q'}$ are the roots of 
 the $J$ polynomial equations
\begin{equation}
\label{trinome}
\tau^2 - \Big((\phi^{-1}_{f}\psi_{f})^{p'} + (\phi^{-1}_{s}\psi_{s})^{q'}
+ c^2 \frac{\dt^2}{pq} \phi^{-1}_{f} \phi^{-1}_{s}\widetilde\Sigma_{s,q'}\widetilde\Sigma_{f,p'}
\Big) \tau
+ (\phi_{f}^{-1}\psi_{f})^{p'} (\phi_{s}^{-1}\psi_{s})^{q'}=0,
\end{equation}
where
\begin{align}
\label{def}
\psi_{f,s} (\mu)= 1-(1-\theta_{f,s})\dfrac{\dt}{p}\mu
\quad{\rm and}\quad
\phi_{f,s} (\mu)= 1+\theta_{f,s}\dfrac{\dt}{p}\mu,\\
\widetilde\Sigma_{f,p'}=\sum_{k=0}^{p'-1}(\phi_f^{-1}\psi_f)^k \quad{\rm and}\quad\widetilde\Sigma_{s,q'}=\sum_{k=0}^{q'-1}(\phi_s^{-1}\psi_s)^k,
\end{align}
and $\mu$ is  an eigenvalue of
$cI+\nu A/(\dx)^2$.
\noindent
We extend these six real-valued functions of $\mu$
to the continuous interval $(c,c+4\nu/(\dx)^2)$.
For $i\in\{s,f\}$,
the functions $\mu\mapsto \phi_{i}^{-1}(\mu)$ and $\mu\mapsto \psi_{i}(\mu)$
are smooth, non-increasing on $(c,c+4\nu/(\dx)^2)$ with values in $(0,1]$.
Hence, any finite product of such functions and any finite sum is
smooth and non-increasing on $(c,c+4\nu/(\dx)^2)$. Indeed,
\begin{align*}
  P:\mu\mapsto (\phi^{-1}_{f}(\mu)\psi_{f}(\mu))^{p'},\quad
  Q:\mu\mapsto(\phi^{-1}_{s}(\mu)\psi_{s}(\mu))^{q'},\quad
  \Sigma:\mu\mapsto c^2 \dfrac{\dt^2}{pq} \phi^{-1}_{f}(\mu) \phi_s^{-1}(\mu)\widetilde\Sigma_{s,q'}(\mu)\widetilde\Sigma_{f,p'}(\mu),
\end{align*}
are positive non-increasing functions on $(c,c+4\nu/(\dx)^2)$.
Note that the discriminant of the polynomial \eqref{trinome} is
\begin{align}
\mathcal{D}(\mu):=&\Big(P(\mu)+Q(\mu)+\Sigma(\mu)\Big)^2-4Q(\mu)P(\mu)\notag\\
=&\Big(Q(\mu)-P(\mu)+\Sigma(\mu)\Big)^2+4P(\mu)\Sigma(\mu)>0\label{eq:discrimQ-P}\\
=&\Big(P(\mu)-Q(\mu)+\Sigma(\mu)\Big)^2+4Q(\mu)\Sigma(\mu)>0,\label{eq:discrimP-Q}
\end{align}
\noindent
so that the eigenvalues of $\Psi_{\dt,p,p',q,q'}$ are real and can be expressed
using the functions
\begin{equation*}
\tau^-(\mu)=\frac{P(\mu)+Q(\mu)+\Sigma(\mu)-\sqrt{\mathcal{D}(\mu)}}{2}
\qquad{\rm and}\qquad
\tau^+(\mu)=\frac{P(\mu)+Q(\mu)+\Sigma(\mu)+\sqrt{\mathcal{D}(\mu)}}{2},
\end{equation*}
\noindent
for $\mu\in(c,c+4\nu/(\dx)^2)$.
Note that, with the stability condition \eqref{stabcond1},
we have $0< \tau^-(\mu)<\tau^+(\mu)$.
Moreover, we have a monotonicity property for the function
$\mu\mapsto \tau^+(\mu)$ on the interval $(c,c+4\nu/(\dx)^2)$
(see Lemma \ref{lemmemonotone}).
Hence the biggest eigenvalue of $\Psi_{\dt,p,p',q,q'}$ is
$\tau^+(\mu_1)$ with $\mu_1:=c+\nu\lambda_1/(\dx)^2$ (see~\eqref{eq:valeurspropresdeA}).

We compute an asymptotic expansion of that biggest eigenvalue when
$\dt\to 0^+$ to control the exponential decay of the
${\rm L}^2$ norm of the numerical solution provided by $\Psi_{\dt,p,p',q,q'}$.
Let $J\in\N^\star$ be fixed.
We number the eigenvalues of $cI+\nu A/\dx^2$ as follows:
\begin{equation}
  \label{eq:mui}
  \forall i\in\{1,\dots,J\},\qquad \mu_i = c+\nu \frac{\lambda_i}{\dx^2}.
\end{equation}
Since
${\phi_{f}^{-1}(\mu_1)\psi_{f}(\mu_1) =(1-(1-\theta_f)\dt \mu_1)/(1+\theta_f\dt \mu_1)}$,
we may write 
\begin{equation*}
\forall k\in\{0,\dots,p'\},\qquad(\phi_{f}^{-1}(\mu_1)\psi_{f}(\mu_1))^k = 1 - k \mu_1 \dt + {\mathcal O}(\dt^2),
\end{equation*}
We infer that
\begin{equation*}
\sum_{k=0}^{p'-1} (\phi_{f}^{-1}(\mu_1)\psi_{f}(\mu_1))^k =
p'-\mu_1\frac{p'(p'-1)}{2}\dt + {\mathcal O}(\dt^2).
\end{equation*}
We obtain Taylor expansions for $P(\mu_1)$, $Q(\mu_1)$,
$\Sigma(\mu_1)$ and then $\mathcal{D}(\mu_1)$ similarly.
Eventually, for the Lie-splitting SF method ($q=q'=p=1$ and $p'=N$),
we obtain the following Taylor expansion for $\tau^+(\mu_1)$ when $\dt$ tends
to $0$:
\begin{equation*}
\tau^+(\mu_1)=1- \gamma_0\dt+ {\mathcal O}(\dt^2),
\end{equation*}
with
\begin{equation*}
  \gamma_0:=\dfrac{(N+1)\mu_1-\sqrt{(N-1)^2\mu_1^2+4Nc^2}}{2}.
\end{equation*}
Therefore,
\begin{equation}\label{eq:tauxnum1}
  \frac{1}{\delta t}{\rm ln} (\tau^+(\mu_1)) = -\gamma_0 + {\mathcal O}(\dt).
\end{equation}
Note that, since $0<c<\mu_1$, we have $0<4Nc^2<4N\mu_1^2$. Hence
$$ (N+1)^2\mu_1^2 - (N-1)^2\mu_1^2=4N\mu_1^2>4Nc^2,$$
and $\gamma_0>0$.
Since $\tau^+(\mu_1)$ is the biggest eigenvalue of $\Psi_{{\rm Lie},\dt}$,
this proves the result.
Note also that
\begin{equation}\label{eq:tauxnum2}
\gamma_0
=\dfrac{(N+1)\mu_1-\sqrt{(N+1)^2\mu_1^2-4N(\mu_1^2-c^2)}}{2}.
\end{equation}
\noindent
Using the mean value theorem,
for some $c_{\theta}\in(0,4N(\mu_1^2-c^2))$,
we conclude that
\begin{equation*}
\gamma_0=\frac{1}{2}
\frac{1}{2}
\frac{4N(\mu_1^2-c^2)}{\sqrt{(N+1)^2\mu_1^2-c_\theta}}
>
N\frac{\mu_1^2-c^2}{(N+1)\mu_1}
=
\frac{N}{N+1}\underbrace{\frac{(\mu_1+c)}{\mu_1}}_{\geq 1}\underbrace{(\mu_1-c)}_{=\nu\lambda_1/\dx^2}
\geq
\frac{N}{N+1}\nu\frac{\lambda_1}{(\dx )^2}
.
\end{equation*}
Putting together \eqref{eq:tauxnum1} and \eqref{eq:tauxnum2} allows for the
expected choice of $\gamma$.\\
Moreover, recalling that $N$ is large and that $N\nu\lambda_1/(\dx )^2\to N\nu\frac{\pi^2}{L^2}$
as $\dx \to 0^+$ (or equivalently as $J\to+\infty$), we get the correct order of magnitude of the numerical rate of convergence.
\end{proof}

\noindent
In the proof of Theorem \ref{decroissancenum}, we used the following

\begin{lemma}
\label{lemmemonotone}
The map $\mu\mapsto \tau^+(\mu)$ is non-increasing in $(c,c+4\nu/(\dx)^2)$. Note that
$\mathcal{D}$ is not a non-increasing function of $\mu$ in general.
\end{lemma}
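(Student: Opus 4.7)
My plan is to bypass any direct manipulation of $\sqrt{\mathcal{D}(\mu)}$ (which is delicate because $\mathcal{D}$ itself need not be monotone) and instead exploit the fact that $\tau^+$ is implicitly defined by a polynomial identity whose coefficients $P+Q+\Sigma$ and $PQ$ have transparent monotonicity properties.

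The first step is to locate $\tau^+$ relative to $P$ and $Q$. Rewriting the trinomial \eqref{trinome} as $(\tau - P(\mu))(\tau - Q(\mu)) = \Sigma(\mu)\tau$, and using $\Sigma(\mu)>0$ and $\tau^+(\mu)>0$, one gets $(\tau^+-P)(\tau^+-Q)>0$, so either $\tau^+>\max(P,Q)$ or $\tau^+<\min(P,Q)$. The Vieta sum identity $\tau^+ + \tau^- = P+Q+\Sigma$ together with $\tau^-<\tau^+$ rules out the second alternative: if $\tau^+<\min(P,Q)$ then $\tau^-=P+Q+\Sigma-\tau^+>\max(P,Q)+\Sigma>\tau^+$, a contradiction. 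Hence $\tau^+(\mu)>\max(P(\mu),Q(\mu))$ throughout the interval $(c,c+4\nu/(\dx)^2)$.

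The second step is implicit differentiation. Since $\mathcal{D}(\mu)>0$ on the interval (by \eqref{eq:discrimQ-P}), $\mu\mapsto\tau^+(\mu)$ is smooth there, and differentiating $(\tau^+)^2 - (P+Q+\Sigma)\tau^+ + PQ = 0$ yields
\begin{equation*}
\bigl(2\tau^+ - (P+Q+\Sigma)\bigr)(\tau^+)'(\mu) = (P'+Q'+\Sigma')\tau^+ - P'Q - PQ'.
\end{equation*}
The prefactor equals $\tau^+-\tau^- = \sqrt{\mathcal{D}(\mu)} > 0$, while the right-hand side rearranges as
\begin{equation*}
P'(\mu)\bigl(\tau^+(\mu)-Q(\mu)\bigr) + Q'(\mu)\bigl(\tau^+(\mu)-P(\mu)\bigr) + \Sigma'(\mu)\,\tau^+(\mu).
\end{equation*}
By the paragraph preceding the lemma, $P$, $Q$ and $\Sigma$ are all non-increasing on $(c,c+4\nu/(\dx)^2)$, hence $P',Q',\Sigma'\leq 0$. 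Combined with the strict positivity of $\tau^+-P$, $\tau^+-Q$ and $\tau^+$ from step one, each of the three summands is non-positive, so $(\tau^+)'(\mu)\leq 0$, which is the claim.

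I do not anticipate a genuine obstacle here; the only care is to avoid reintroducing $\sqrt{\mathcal{D}}$ when controlling the sign of $\tau^+-\tau^-$, which is precisely why the Vieta relation is used. The side remark that $\mathcal{D}$ itself is not necessarily non-increasing is easy to confirm a posteriori: for small $\Sigma$ one has $\mathcal{D}\approx (P-Q)^2$, and parameters can be chosen so that $P-Q$ changes sign on the interval, making $\mu\mapsto|P(\mu)-Q(\mu)|$ (and hence $\mathcal{D}$) first decrease and then increase.
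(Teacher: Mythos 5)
Your proof is correct, but it follows a genuinely different route from the paper's. The paper works directly with the closed-form expression $\tau^+=\bigl(P+Q+\Sigma+\sqrt{\mathcal{D}}\bigr)/2$: it differentiates it, multiplies through by $2\sqrt{\mathcal{D}}$, and controls the resulting expression using the key inequality $\sqrt{\mathcal{D}}>|P-Q|$ (extracted from the two rewritings \eqref{eq:discrimQ-P}--\eqref{eq:discrimP-Q} of the discriminant), finishing with the algebraic identity $P'(|P-Q|+P-Q)+Q'(|Q-P|+Q-P)\leq 0$. You instead never touch $\sqrt{\mathcal{D}}$ except to note that the prefactor $2\tau^+-(P+Q+\Sigma)=\tau^+-\tau^-$ is positive: you first localize $\tau^+$ strictly above $\max(P,Q)$ via the factorization $(\tau-P)(\tau-Q)=\Sigma\tau$ and the Vieta sum, then differentiate the quadratic implicitly so that the right-hand side splits into the three manifestly non-positive terms $P'(\tau^+-Q)$, $Q'(\tau^+-P)$ and $\Sigma'\tau^+$. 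The two arguments rest on the same ingredients (positivity and monotonicity of $P$, $Q$, $\Sigma$, and strict positivity of $\mathcal{D}$), and your localization $\tau^+>\max(P,Q)$ is essentially equivalent to the paper's $\sqrt{\mathcal{D}}>|P-Q|-\Sigma$; but your organization is arguably cleaner, since the sign of each summand is immediate and no absolute values need to be juggled. The paper's version has the minor advantage of exhibiting the explicit lower bound $\sqrt{\mathcal{D}}>|P-Q|$, which is reused nowhere else, so nothing is lost. Your closing heuristic for why $\mathcal{D}$ need not be monotone matches the spirit of the paper's side remark, which is likewise not proved there.
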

\begin{proof}
We use the notations of Theorem \ref{decroissancenum}.
Note that, thanks to \eqref{eq:discrimQ-P},  $\sqrt{\mathcal{D}(\mu)}>Q(\mu)-P(\mu)$ if
$Q(\mu)>P(\mu)$. Similarly, \eqref{eq:discrimP-Q} leads to
$\sqrt{\mathcal{D}(\mu)}>P(\mu)-Q(\mu)$ if $P(\mu)>Q(\mu)$ since $P$, $Q$, $\Sigma$ are
positive functions. So
 $ \sqrt{\mathcal{D}}>|P-Q|.$
Differentiating the function $\mu\mapsto \tau^+(\mu)$ with respect to $\mu$ yields
\begin{eqnarray*}
  2\sqrt{\mathcal{D}}\dfrac{\dd}{\dd \mu}\tau^+ & = &  
  (\underbrace{P'+Q'+\Sigma'}_{<0})\sqrt{\mathcal{D}} + (P+Q+\underbrace{\Sigma}_{>0})(\underbrace{P'+Q'+\Sigma'}_{<0})-2(PQ)' \\
  & < & (P'+Q'+\Sigma')|Q-P| + (P+Q)(P'+Q'+\Sigma')-2P'Q - 2PQ'\\
  & < & P'(|P-Q|+P-Q)+Q'(|Q-P|+Q-P)\\
  & \leq & 0.
\end{eqnarray*}
This implies that the derivative of $\mu\mapsto \tau^+(\mu)$ is
non-positive on $(c,c+4\nu/(\dx)^2)$ and proves the lemma. 
\end{proof}

\subsection{The inhomogeneous Dirichlet problem}

\paragraph{The continuous problem}
In this section we consider System \eqref{systemechaleur} equipped with
inhomogeneous Dirichlet boundary conditions, namely
\begin{equation}\label{nhbv}
  u(t,0)=u_l, \qquad u(t,L)=u_r, \qquad v(t,0)=v_l, \qquad v(t,L)=v_r,
\end{equation}
where $u_l,v_l,u_r$ and $v_r$ are four given real numbers.
As in the homogeneous case above (see Section \ref{sec:homogeneousDirichlet}),
there is a unique stationary solution to the boundary value problem:
\begin{proposition}
\label{prop:solsnhbv}
  The PDE system \eqref{systemechaleur} with non homogeneous Dirichlet boundary
  conditions has a unique stationary solution given by

  \begin{equation}
\begin{cases}\label{solsnhbv}
  u^\infty_{\rm ex}: & \!\!\! x\mapsto  \frac{u_l+v_l}{2} \!+\! \frac{(u_r+v_r-u_l-v_l) x}{2L} \!+\!
  \frac{(u_l-v_l)[\cosh(x/\alpha)-\cosh(L/\alpha)\sinh(x/\alpha)/\sinh(L/\alpha)]}{2}\!+\!\frac{(u_r-v_r)\sinh(x/\alpha)/\sinh(L/\alpha)}{2}\\
\\
  v^\infty_{\rm ex}: & \!\!\! x\mapsto  \frac{u_l+v_l}{2} \!+\! \frac{(u_r+v_r-u_l-v_l) x}{2L} \!-\! \frac{(u_l-v_l)[\cosh(x/\alpha)-\cosh(L/\alpha)\sinh(x/\alpha)/\sinh(L/\alpha)]}{2}\!-\!\frac{(u_r-v_r)\sinh(x/\alpha)/\sinh(L/\alpha)}{2}
\end{cases}
\end{equation}
where $\alpha=\sqrt{\nu/(2c)}$. 
\end{proposition}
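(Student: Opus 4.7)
The plan is to directly compute the stationary solutions by exploiting the symmetry of the system after dividing the first equation by $N$. Setting time derivatives to zero in \eqref{systemechaleur} and dividing the first equation by $N$, the stationary problem becomes
\begin{equation*}
  \nu \Delta u + c(v-u) = 0, \qquad \nu \Delta v + c(u-v) = 0,
\end{equation*}
together with the boundary conditions \eqref{nhbv}. The remarkable feature is that the factor $N$ has disappeared, and the system is now symmetric in $u$ and $v$.

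The next step I would take is to diagonalize the system via the linear change of unknowns $S = u+v$ and $D = u-v$. Adding the two equations gives $\nu \Delta S = 0$, while subtracting them yields $\nu \Delta D - 2cD = 0$, i.e. $\Delta D = D/\alpha^2$ with $\alpha = \sqrt{\nu/(2c)}$. The corresponding boundary values are $S(0)=u_l+v_l$, $S(L)=u_r+v_r$, $D(0)=u_l-v_l$, $D(L)=u_r-v_r$. Each of these is a linear second-order ODE with two-point boundary conditions, and each admits a unique solution. For $S$, one obtains immediately the affine function
\begin{equation*}
  S(x) = (u_l+v_l) + \frac{(u_r+v_r-u_l-v_l)\,x}{L}.
\end{equation*}
For $D$, the general solution is a combination of $\cosh(x/\alpha)$ and $\sinh(x/\alpha)$, and fitting the two boundary values yields
\begin{equation*}
  D(x) = (u_l-v_l)\Bigl[\cosh(x/\alpha) - \frac{\cosh(L/\alpha)\sinh(x/\alpha)}{\sinh(L/\alpha)}\Bigr] + (u_r-v_r)\,\frac{\sinh(x/\alpha)}{\sinh(L/\alpha)}.
\end{equation*}
Reconstructing $u = (S+D)/2$ and $v = (S-D)/2$ produces exactly the formulas \eqref{solsnhbv}.

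For uniqueness, the $S$/$D$ decoupling also makes it transparent: two stationary solutions would have differences $\widetilde S$ and $\widetilde D$ solving the homogeneous problems $\Delta \widetilde S = 0$ and $\Delta \widetilde D = \widetilde D/\alpha^2$ with zero boundary data, whose only solutions are trivial (for $\widetilde S$, by the classical one-dimensional computation; for $\widetilde D$, since the operator $-\Delta + 1/\alpha^2$ is coercive, e.g. by multiplying by $\widetilde D$ and integrating by parts). Alternatively, one can invoke Theorem \ref{th:SolEx} translated by the candidate stationary solution to conclude that any two stationary states must coincide. There is no real obstacle here; the only mild bookkeeping step is checking that the formulas indeed satisfy the boundary conditions at $x=0$ and $x=L$, which is immediate by evaluating $\cosh(0)=1$, $\sinh(0)=0$, and using $\cosh(L/\alpha)^2 - \sinh(L/\alpha)^2 = 1$ implicitly only for the interior computation but not at the endpoints.
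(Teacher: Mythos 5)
Your proof is correct; the paper in fact states Proposition \ref{prop:solsnhbv} without any proof, but the displayed formulas are visibly of the form $(S\pm D)/2$ with $S=u+v$ affine and $D=u-v$ solving $D''=D/\alpha^2$, so your decoupling via $S$ and $D$ (after dividing the fast stationary equation by $N$) is exactly the computation underlying the statement, and your uniqueness argument by coercivity of $-\Delta+1/\alpha^2$ is sound. The closing remark about $\cosh^2(L/\alpha)-\sinh^2(L/\alpha)=1$ is unnecessary --- verifying the boundary values only uses $\cosh(0)=1$, $\sinh(0)=0$ and the cancellation of the bracket at $x=L$ --- but it is harmless.
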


\noindent
Therefore, using the linearity of the problems,
for all $(u^0,v^0)\in {\rm L}^2(0,L)^2$,
the inhomogeneous reaction-diffusion system
\eqref{systemechaleur}-\eqref{nhbv} has a unique solution
in ${C^0([0,+\infty),{\rm L}^2(0,L)^2)
\cap C^\infty((0,+\infty)\times[0,L],\R^2)}$
satisfying ${(u,v)(0)=(u^0,v^0)}$, which is obtained from that of
the homogeneous Dirichlet problem (with a modified initial datum)
by adding the constant-in-time function
\eqref{solsnhbv} to it (see Theorem \ref{th:ExSol}).
Moreover, for all initial datum $(u^0,v^0)$, the solution of the inhomogeneous
System \eqref{systemechaleur} converges exponentially fast
as $t\rightarrow +\infty$ to the stationary
solution \eqref{solsnhbv} in ${\rm L}^2(0,L)^2$.

The goal of the next paragraphs is to illustrate how well this convergence
towards (a discretized version of) the stationary solution is achieved by
numerical methods using subcycling techniques.

\paragraph{Space and time discretizations}
Using the same space discretization as above
(see Section \ref{sec:homogeneousDirichlet}),
we consider two $\theta$-schemes for the time discretization
in the spirit of what we did for the homogeneous problem
(see \eqref{thetaschemeDHfast}-\eqref{thetaschemeDHslow}),
with parameters $\theta_f$ and $\theta_s$.
Taking into account the inhomogeneous Dirichlet boundary conditions
yields a sequence $((U^n,V^n)^{\t})_{n\in\N}$ defined by an
arithmetic-geometric recursion: given $W^0=(U^0,\,V^0)^{\t}\in \R^{2J}$, we have
for all $n\geq 0$, 
\begin{equation}\label{recursion}
W^{n+1}=\mathcal{M}W^n+\mathcal{M}_u
\begin{pmatrix}
  U_{l,r}\\0_{J}
\end{pmatrix}
 +\mathcal{M}_v 
\begin{pmatrix}
  0_J\\ V_{l,r}
\end{pmatrix}=:\mathcal{M}W^n+\Upsilon
  \end{equation}
where $\mathcal{M}$ is defined as a product of matrices of the form
\eqref{formematDH}, $U_{l,r}=(u_l,0,\hdots,0,u_r)^{\t}$,
$V_{l,r}=(v_l,0,\hdots,0,v_r)^{\t}$ and $\mathcal{M}_u$ and
$\mathcal{M}_v$ are $2J$-by-$2J$ matrices, depending on $\dt$, $\dx$
and the choice of the splitting method between the two $\theta$-schemes.

Let us list the numerical experiments we conducted:
\begin{itemize}
\item Scheme \#1 (Lie - SF - slow time - subcycled): $M_s:=M_s(\dt)$ and ${M_f:=M_f(\dt/N)}$
  \begin{equation} \label{Schema01}
    \mathcal{M}=M_sM_f^N ,\,\,\, 
\mathcal{M}_u=\nu\dfrac{\dt}{\delta x^2}M_s\sum_{k=0}^{N-1}M_f^k
\begin{pmatrix}
  C_f^{-1} & 0\\ 0 & 0
\end{pmatrix}
\,\,\mbox{and }\,\, \mathcal{M}_v=\nu\dfrac{\dt}{\delta x^2}
\begin{pmatrix}
 0& 0\\ 0 & C_s^{-1} 
\end{pmatrix}
  \end{equation}
\item Scheme \#2 (Lie - SF - fast time - no subcycling): $M_s:=M_s(\dt/N)$ and $M_f:=M_f(\dt/N)$
  \begin{equation*}
    \mathcal{M}=M_sM_f ,\,\,\, 
\mathcal{M}_u=\nu\dfrac{\dt}{\delta x^2}M_s
\begin{pmatrix}
  C_f^{-1} & 0\\ 0 & 0
\end{pmatrix}
\,\,\mbox{and }\,\, \mathcal{M}_v=\frac{\nu}{N}\dfrac{\dt}{\delta x^2}
\begin{pmatrix}
 0& 0\\ 0 & C_s^{-1} 
\end{pmatrix}
  \end{equation*}
\item Scheme \#3 (Strang - SFS - slow time - subcycled): $M_s:=M_s(\dt/2)$ and $M_f:=M_f(\dt/N)$
  \begin{equation*}
    \mathcal{M}=M_sM_f^NM_s ,\,\,\, 
\mathcal{M}_u=\nu\dfrac{\dt}{\delta x^2}M_s\sum_{k=0}^{N-1}M_f^k
\begin{pmatrix}
  C_f^{-1} & 0\\ 0 & 0
\end{pmatrix}
\,\,\mbox{and }\,\, \mathcal{M}_v=\nu \frac{\dt}{2\delta x^2} (I_{2J}+M_sM_f^N) 
\begin{pmatrix}
 0& 0\\ 0 & C_s^{-1} 
\end{pmatrix}
  \end{equation*}
\item Scheme \#4 (Strang - SFS - fast time - no subcycling): $M_s:=M_s(\dt/(2N))$ and $M_f:=M_f(\dt/N)$
  \begin{equation*}
    \mathcal{M}=M_sM_fM_s ,\,\,\, 
\mathcal{M}_u=\nu\dfrac{\dt}{\delta x^2}M_s
\begin{pmatrix}
  C_f^{-1} & 0\\ 0 & 0
\end{pmatrix}
\,\,\mbox{and }\,\, \mathcal{M}_v=\nu\dfrac{\dt}{2N\delta x^2}
(I_{2J}+M_sM_f) 
\begin{pmatrix}
 0& 0\\ 0 & C_s^{-1} 
\end{pmatrix}
  \end{equation*}
\item Scheme \#5 (weighted - slow time - subcycled): $M_s:=M_s(\dt)$ and $M_f:=M_f(\dt/N)$
  \begin{equation*}
    \mathcal{M}=\frac{1}{2}(M_s M_f^N + M_f^N M_s ),\,\,\, 
  \end{equation*}
  \begin{equation*}
\mathcal{M}_u=\nu\dfrac{\dt}{\delta x^2}\frac{I_{2J}+M_s}{2}\sum_{k=0}^{N-1}M_f^k
\begin{pmatrix}
  C_f^{-1} & 0\\ 0 & 0
\end{pmatrix}
\,\,\mbox{and }\,\,
\mathcal{M}_v=
\nu\dfrac{\dt}{\delta x^2}(I_{2J}+M_f^N)
\begin{pmatrix}
 0& 0\\ 0 & C_s^{-1} 
\end{pmatrix}
  \end{equation*}
\item Scheme \#6 (weighted - fast time - no subcycling): $M_s:=M_s(\dt/N)$
and $M_f:=M_f(\dt/N)$
  \begin{equation*}
    \mathcal{M}=\frac{1}{2}(M_s M_f + M_f M_s ),\,\,\, 
\mathcal{M}_u=\nu\dfrac{\dt}{\delta x^2}\frac{I_{2J}+M_s}{2}
\begin{pmatrix}
  C_f^{-1} & 0\\ 0 & 0
\end{pmatrix}
\,\,\mbox{and }\,\, \mathcal{M}_v=\frac{\nu}{N}\dfrac{\dt}{\delta x^2}
\frac{I_{2J}+M_f}{2}
\begin{pmatrix}
 0& 0\\ 0 & C_s^{-1} 
\end{pmatrix}
  \end{equation*}
\end{itemize}
In order to keep notations short, we used the following convention.
For the subcycled schemes (Schemes \#1, \#3 and \#5), an application
of the iteration formula \eqref{recursion} corresponds to a time interval
of length $\dt$.
However, for the schemes \#2, \#4 and \#6, an application of the iteration
formula \eqref{recursion} corresponds to a time interval of length $\dt/N$.
Note that, in particular, this convention does not modify the asymptotic
states of the methods (meaning that if $W^\infty_{\rm num}\in\R^{2J}$ is an
asymptotic for the iteration of Scheme \#2 (resp. \#4, resp. \#6),
then it is also an asymptotic state for Scheme \#2 (resp. \#4, resp. \#6)
iterated $N$ times).

\paragraph{Equilibrium states of the splitting schemes}

We prove the existence of a unique equilibrium state for the splitting
Scheme \#1 above, comment on the rate of convergence of the scheme
towards its equilibrium state and also analyze how close the 
equilibrium state of the scheme is to a projection on the numerical
space grid of the equilibrium state \eqref{solsnhbv} of the continuous
reaction-diffusion system \eqref{systemechaleur} with inhomogeneous
Dirichlet conditions \eqref{nhbv} in an ${\rm L}^2$ sense.
Following \eqref{stabcond1}, we denote by $\CFL(J)$ the positive real number
$$\CFL(J)=\frac{1}{c+4\nu/\dx^2}=\frac{1}{c+4\nu(J+1)^2/L^2}.$$
To compute asymptotic
numerical solutions of a given method of type \eqref{recursion},
we need to solve the $2J$-by-$2J$ linear system
\begin{equation}\label{eq:ls}
  (I_{2J}-\mathcal{M})W= \Upsilon.
\end{equation}
\begin{proposition}
Let $\dt,\dx>0$ satisfying \eqref{stabcond1} be fixed.
For a subcycled SF Lie-splitting method of the form
\eqref{Schema01}, {based on $\theta$-schemes,} there exists a unique numerical asymptotic state
defined as the unique solution $W^\infty_{\rm num}$
of the linear system \eqref{eq:ls}.
\end{proposition}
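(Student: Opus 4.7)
A fixed point of the affine iteration \eqref{recursion} satisfies $W=\mathcal{M}W+\Upsilon$, so the existence and uniqueness of the asymptotic state $W^\infty_{\rm num}$ reduce to showing that $I_{2J}-\mathcal{M}$ is invertible, equivalently that $1$ is not an eigenvalue of $\mathcal{M}=\Psi_{{\rm Lie},\dt}$.

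I would re-use the spectral reduction already performed in the proof of Theorem~\ref{decroissancenum}: conjugating $\mathcal{M}$ by the block-diagonal matrix $\mathcal{Z}$ from \eqref{MatriceP} decouples it into $J$ blocks of size $2\times 2$, so that its eigenvalues are exactly the $2J$ roots $\tau^\pm(\mu_i)$, $i\in\{1,\dots,J\}$, of the quadratic polynomial \eqref{trinome} specialized to $(p,p',q,q')=(1,N,1,1)$ and evaluated at $\mu=\mu_i=c+\nu\lambda_i/\dx^2$ (see \eqref{eq:mui}).

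The core of the argument is then to establish $\tau^\pm(\mu_i)\neq 1$ for every $i$. By Vieta's formulas applied to \eqref{trinome}, $\tau=1$ is a root if and only if
\[
  \Sigma(\mu_i) = (1-P(\mu_i))(1-Q(\mu_i)).
\]
I would reduce both sides to a common closed form, using the elementary telescoping identity $1-\phi_z^{-1}\psi_z=\dt\mu/\phi_z$ for $z\in\{s,f\}$ (valid because $p=q=1$) and the geometric sum $\widetilde\Sigma_{f,N}(\mu)=(1-P(\mu))\phi_f(\mu)/(\dt\mu)$ together with $\widetilde\Sigma_{s,1}(\mu)=1$. Both sides then factor through the common quantity $(1-P(\mu_i))\,\dt/\phi_s(\mu_i)$, and the equation boils down to $c^2=\mu_i^2$. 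Under the CFL condition \eqref{stabcond1}, one checks that $\phi_f^{-1}\psi_f\in[0,1)$ (strictly, because $\dt\mu_i>0$), so $P(\mu_i)=(\phi_f^{-1}\psi_f)^N\in[0,1)$ and the common factor is positive; on the other hand, $\lambda_i>0$ forces $\mu_i>c>0$, so $c^2<\mu_i^2$, a contradiction. Hence no $\tau^\pm(\mu_i)$ equals $1$, $I_{2J}-\mathcal{M}$ is invertible, and \eqref{eq:ls} admits a unique solution $W^\infty_{\rm num}$.

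The only mildly delicate step is the algebraic collapse that simultaneously reduces $\Sigma(\mu_i)$ and $(1-P(\mu_i))(1-Q(\mu_i))$ to expressions differing only by the replacement of $\mu_i$ by $c^2/\mu_i$; once this is done, the strict positivity $\mu_i>c$ built into the definition \eqref{eq:mui} of $\mu_i$ closes the argument without further effort.
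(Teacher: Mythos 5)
Your proof is correct, and it takes a genuinely different route from the paper's. The paper disposes of this proposition in two lines: it invokes Theorem~\ref{decroissancenum} to assert that the spectral radius of $\mathcal{M}=\Psi_{{\rm Lie},\dt}$ is less than $1$ under the CFL condition \eqref{stabcond1}, whence $I_{2J}-\mathcal{M}$ is invertible. You instead prove only what is needed — that $1$ is not an eigenvalue — by going back to the characteristic quadratic \eqref{trinome}, applying Vieta to reduce the condition $\tau=1$ to $(1-P(\mu_i))(1-Q(\mu_i))=\Sigma(\mu_i)$, and collapsing both sides via the identities $1-\phi_z^{-1}\psi_z=\dt\mu/\phi_z$ and $\widetilde\Sigma_{f,N}=(1-P)\phi_f/(\dt\mu)$ to the single equation $\mu_i^2=c^2$, which fails because $\mu_i=c+\nu\lambda_i/\dx^2>c$. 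Your computations check out (in particular $P(\mu_i)\in[0,1)$ under \eqref{stabcond1} so the common factor $(1-P)\dt/\phi_s$ is indeed positive and may be cancelled). What your approach buys is self-containedness and a cleanly quantified range of validity: the paper's citation of Theorem~\ref{decroissancenum} is slightly delicate, since that theorem as stated only guarantees the decay estimate for $\dt\in(0,h)$ with an unspecified $h$, whereas your argument works verbatim for every $\dt\in(0,\CFL(J))$. What it gives up is the stronger conclusion the paper gets for free, namely that the whole spectrum lies in $(0,1)$, which is what governs the rate of convergence of the iteration \eqref{recursion} to $W^\infty_{\rm num}$ and is used again later; your argument only rules out the eigenvalue $1$, so by itself it establishes existence and uniqueness of the fixed point but not that the iterates converge to it.
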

\begin{proof}
Since $\dt,\delta x$ satisfy \eqref{stabcond1}, we know from
Theorem \ref{decroissancenum} that the spectral radius of the matrix
$\mathcal M$ of $\Psi_{{\rm Lie},\dt}$ in the canonical basis of $\R^{2J}$ is
less than 1.
Hence, the matrix $I_{2J}-\mathcal M$ is invertible and the
numerical asymptotic state is well-defined and unique.
\end{proof}

Using the linearity of the problems, we infer that the numerical
rate of convergence towards this asymptotic state is then given
by Theorem~\ref{decroissancenum}.

Let us state and prove the central result of this section, {\em i.e.} the convergent asymptotic
behavior of the subcycled SF Lie method (Scheme \#1)
{involving $\theta$-schemes}:
\begin{theorem}
\label{Th:ApproxLieDirInHomo}
Provided that $\dt\in(0,\CFL(J))$, the asymptotic state of Scheme \#1 {(subcycled Lie method based on $\theta$-schemes)} is a
{\em uniform-in-$\dt$} second order approximation of the exact asymptotic state given
in Proposition \ref{prop:solsnhbv}:
  \begin{equation*}
     \begin{pmatrix}
    \Pi_\dx \big(u^\infty_{\rm ex}\big) \\
    \Pi_\dx \big(v^\infty_{\rm ex}\big) \\
  \end{pmatrix}
  -
  W^\infty_{\rm num}(\dt)
  = {\mathcal O}(\dx^2),
\end{equation*}
where for $w\in C^0([0,L])$, $\Pi_{\dx}(w)=(w(x_1),\hdots,w(x_J))^{\t}$.
\end{theorem}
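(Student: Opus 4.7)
The asymptotic state $W^\infty_{\rm num}$ is characterized by the linear system $(I - \mathcal{M})W^\infty_{\rm num} = \Upsilon$ with $\mathcal{M} = M_s M_f^N$. My strategy is to reduce this system, by purely algebraic manipulations, to a discrete elliptic problem that is \emph{entirely independent of} $\dt$, and then to invoke classical second-order finite-difference convergence theory.

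The crucial observation is that $A$, $B_s$, $C_s$, $B_f$, $C_f$ and $\Sigma_{f,N} := \sum_{k=0}^{N-1}(B_f C_f^{-1})^k$ are all polynomials in $A$, hence pairwise commute. Writing $H := cI + (\nu/\dx^2)A$, the telescoping identities $I - (B_f C_f^{-1})^N = \dt\, H C_f^{-1} \Sigma_{f,N}$ and $I - B_s C_s^{-1} = \dt\, H C_s^{-1}$ let me expand each of the four blocks of $I - \mathcal{M}$ and of $\Upsilon$ explicitly; both come out proportional to $\dt$ times polynomials in $A$. Dividing the first block equation by $\dt$ and left-multiplying by $\Sigma_{f,N}^{-1} C_f$ (invertible under the CFL condition \eqref{stabcond1}, since $B_f C_f^{-1}$ has eigenvalues in $(0,1]$), I expect to obtain exactly
\begin{equation*}
(\nu/\dx^2)\, A\, U^\infty_{\rm num} + c(U^\infty_{\rm num} - V^\infty_{\rm num}) = (\nu/\dx^2)\, U_{l,r}.
\end{equation*}
For the second block equation, dividing by $\dt$, left-multiplying by $C_s$, substituting $(B_f C_f^{-1})^N = I - \dt H C_f^{-1}\Sigma_{f,N}$ in the cross-term, and then re-using the first equation to rewrite the resulting combination $C_f^{-1}\Sigma_{f,N}[(\nu/\dx^2) A U^\infty_{\rm num} + c(U^\infty_{\rm num} - V^\infty_{\rm num})]$, the remaining $\dt$-dependent contributions should cancel identically, leaving
\begin{equation*}
(\nu/\dx^2)\, A\, V^\infty_{\rm num} + c(V^\infty_{\rm num} - U^\infty_{\rm num}) = (\nu/\dx^2)\, V_{l,r}.
\end{equation*}
This coupled system is \emph{independent of} $\dt$ and is precisely the standard three-point finite-difference discretization of the continuous elliptic system solved by $(u^\infty_{\rm ex}, v^\infty_{\rm ex})$.

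The remaining step is classical finite-difference analysis. Since $(u^\infty_{\rm ex}, v^\infty_{\rm ex})$ is smooth (explicit in terms of hyperbolic functions, see Proposition \ref{prop:solsnhbv}), Taylor expansion gives that $\Pi_\dx(u^\infty_{\rm ex}, v^\infty_{\rm ex})$ satisfies the discrete system up to a truncation error of order $\dx^2$. Combined with a uniform discrete stability estimate for the coupled elliptic operator --- which follows from its symmetric positive-definiteness on $\R^{2J}$ (the principal part $\mathrm{diag}(A, A)$ is SPD and the reaction coupling is symmetric positive semi-definite) --- this yields $\|\Pi_\dx(u^\infty_{\rm ex}, v^\infty_{\rm ex}) - W^\infty_{\rm num}\|_2 = \mathcal{O}(\dx^2)$, uniformly in $\dt$. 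The main obstacle is the algebraic reduction in the previous paragraph: one must track the commuting polynomial factors in $A$ carefully to verify the \emph{exact} (not merely high-order) cancellation of all $\dt$-dependent terms; this exact cancellation is precisely what makes the error bound uniform in $\dt$.
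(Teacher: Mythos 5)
Your proof is correct, but it takes a genuinely different route from the paper's, and it actually yields a slightly stronger conclusion. The paper never identifies the fixed point explicitly: it injects the projected exact stationary state into the iteration, establishes $(I_{2J}-M_sM_f^N)\bigl(\Pi_\dx(u^\infty_{\rm ex},v^\infty_{\rm ex})^{\t}-W^\infty_{\rm num}\bigr)=\dt\,\mathcal O(\dx^2)$ by an $N$-fold consistency argument (kept uniform via the norm bounds of Lemma \ref{Lemmebornitude}), and then concludes with the estimate $\normetriple{(I_{2J}-M_s M_f^N)^{-1}}_2\le C/\dt$ of Proposition \ref{ControleInverse}, whose proof is a long spectral computation in the appendix. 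Your algebraic reduction, by contrast, really does go through: writing $H=cI+\nu A/\dx^2$, one has $I-(B_fC_f^{-1})^N=\dt\,HC_f^{-1}\Sigma_{f,N}$ and $I-B_sC_s^{-1}=\dt\,HC_s^{-1}$; the first block row of $(I-\mathcal M)W=\Upsilon$ reduces, after cancelling the common factor $\dt\,C_f^{-1}\Sigma_{f,N}$ (invertible since $B_fC_f^{-1}$ has spectrum in $(0,1]$ under \eqref{stabcond1}, so $\Sigma_{f,N}$ has eigenvalues $\ge 1$), to $HU-cV=\nu U_{l,r}/\dx^2$, and substituting $(B_fC_f^{-1})^N=I-\dt HC_f^{-1}\Sigma_{f,N}$ into the second block row leaves exactly $HV-cU=\nu V_{l,r}/\dx^2$ plus $c\dt\,C_f^{-1}\Sigma_{f,N}$ times the first residual, which vanishes. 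Hence $W^\infty_{\rm num}$ is literally independent of $\dt$ (and of $\theta_f$, $\theta_s$ and $N$): it is the standard three-point finite-difference solution of the stationary system, and the $\mathcal O(\dx^2)$ bound follows from the uniform coercivity you invoke (smallest eigenvalue of the coupled discrete operator at least $\nu\lambda_1/\dx^2\ge 4\nu/L^2$ by \eqref{estimconcave}) together with the $\mathcal O(\dx^2)$ truncation error of the smooth stationary profiles. What your approach buys is a sharper statement and the complete avoidance of Proposition \ref{ControleInverse}; what it loses is robustness: the exact cancellation is special to the SF Lie structure of Scheme \#1, whereas the paper's consistency-plus-inverse-bound template is the one that would transfer to the Strang and weighted variants, for which the numerical asymptotic state genuinely depends on $\dt$.
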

\begin{proof}
  To analyze the asymptotic convergence of Scheme \#1, we put the projections
  $\Pi_\dx \big(u^\infty_{\rm ex}\big)$ and $\Pi_\dx \big(v^\infty_{\rm
    ex}\big)$ of the exact solutions $u^\infty_{\rm ex}$ and $v^\infty_{\rm ex}$
  defined in \eqref{solsnhbv} in the numerical scheme.  Using the identity
  \begin{equation*}
    \frac{1}{\dx^2} A \Pi_\dx \big(u^\infty_{\rm ex}\big) =
    - \Pi_\dx \big(\Delta u^\infty_{\rm ex}\big)
    + U_{l,r}
    + \mathcal O(\dx^2),
  \end{equation*}
  and the fact that $(u^\infty_{\rm ex},v^\infty_{\rm ex})$
  is an equilibrium state of problem \eqref{systemechaleur}
  with the inhomogeneous Dirichlet
  boundary conditions \eqref{nhbv}, we first compute
  \begin{equation*}
    M_f 
    \begin{pmatrix}
      \Pi_\dx \big(u^\infty_{\rm ex}\big) \\
      \Pi_\dx \big(v^\infty_{\rm ex}\big) \\
    \end{pmatrix}
    =
    \begin{pmatrix}
      \Pi_\dx \big(u^\infty_{\rm ex}\big) \\
      \Pi_\dx \big(v^\infty_{\rm ex}\big) \\
    \end{pmatrix}
    - \nu \frac{\dt}{\dx^2} 
    \begin{pmatrix}
      C_f^{-1} U_{l,r}\\
      0 \\
    \end{pmatrix}
    +{\mathcal O} (\dt (\dx)^2),
  \end{equation*}
  \noindent where the constant in the $\mathcal O$ is independent of $\delta t$
  and $\delta x$ provided that the CFL condition is fulfilled. Iterating this
  computation, we obtain
  \begin{equation}\label{eq:consistency}
    M_f^N
    \begin{pmatrix}
      \Pi_\dx \big(u^\infty_{\rm ex}\big) \\
      \Pi_\dx \big(v^\infty_{\rm ex}\big) \\
    \end{pmatrix}
    =
    \begin{pmatrix}
      \Pi_\dx \big(u^\infty_{\rm ex}\big) \\
      \Pi_\dx \big(v^\infty_{\rm ex}\big)\\
    \end{pmatrix}
    - \nu \frac{\dt}{\dx^2} \sum_{k=0}^{N-1} M_f^{k}
    \begin{pmatrix}
      C_f^{-1} & 0\\
      0 & 0 \\
    \end{pmatrix}
    \begin{pmatrix}
      U_{l,r}\\
      0 \\
    \end{pmatrix}
    +{\mathcal O} (\dt (\dx)^2),
  \end{equation}
  where, once again, the constant in the $\mathcal O$ is independent of $\dt$
  and
  $\dx$ provided that the CFL condition \eqref{stabcond1} is fulfilled.
  This is due to the fact that we have
  \begin{equation*}
    M_f {\mathcal O}(\dt (\dx)^2) = {\mathcal O}(\dt (\dx)^2),
  \end{equation*}
  provided that $\dt\in(0,\CFL(J))$ thanks to Lemma \ref{Lemmebornitude} (see Appendix), which
  gives uniform estimates of   $\normetriple{M_{s,f}}_2$.
  Multiplying \eqref{eq:consistency} by $M_s$ and using again
that $(u^\infty_{\rm ex},v^\infty_{\rm ex})$
is an equilibrium state of problem \eqref{systemechaleur}
with the inhomogeneous Dirichlet
boundary conditions \eqref{nhbv},
we finally get
\begin{equation*}
  (I_{2J}- M_s M_f^N)
  \begin{pmatrix}
    \Pi_\dx \big(u^\infty_{\rm ex}\big) \\
    \Pi_\dx \big(v^\infty_{\rm ex}\big) \\
  \end{pmatrix}
  =
  \nu \frac{\dt}{\dx^2} M_s \sum_{k=0}^{N-1} M_f^{k}
  \begin{pmatrix}
    C_f^{-1} & 0\\
    0 & 0 \\
  \end{pmatrix}
  \begin{pmatrix}
    U_{l,r}\\
    0 \\
  \end{pmatrix}
  +\nu \frac{\dt}{\dx^2}
  \begin{pmatrix}
    0 & 0\\
    0 & C_s^{-1} \\
  \end{pmatrix}
  \begin{pmatrix}
    0\\
    V_{l,r} \\
  \end{pmatrix}
  +{\mathcal O} (\dt (\dx)^2).
\end{equation*}
Comparing this relation with that defining the numerical equilibrium state
\eqref{recursion} (with the right-hand side defined in \eqref{Schema01}), we
infer that
\begin{equation}
\label{eq:madifferenceamoi}
  (I_{2J}-M_sM_f^N)
  \left(
    \begin{pmatrix}
      \Pi_\dx \big(u^\infty_{\rm ex}\big) \\
      \Pi_\dx \big(v^\infty_{\rm ex}\big) \\
    \end{pmatrix}
    -
    W^\infty_{\rm num}
  \right)
  = \dt {\mathcal O}(\dx^2),
\end{equation}
where the constant in the $\mathcal O$ is independent of $\dt$ and $\dx$
provided that the CFL condition \eqref{stabcond1} is fulfilled.
Finally, we can use the result of Proposition \ref{ControleInverse} (see Appendix)
which states that there exists a constant $C>0$ such that for all
$\dt$ and $\dx$ satisfying the CFL condition, we have
$\normetriple{(I-M_s M_f^N)^{-1}}_2 \leq \frac{C}{\dt}$.
This estimate together with that written in \eqref{eq:madifferenceamoi}
proves the result.
\end{proof} 
\paragraph{Numerical tests}
The numerical tests we conducted for several values of $\theta_f$, $\theta_s$
and $N$ showed numerically
that the matrix $I_{2J}-\mathcal{M}$ is also invertible for
Schemes \#3 and \#4.  We
show here the graphs obtained with Scheme \#1 for the following sets of
parameters, $N=10$ being fixed, $\nu_1=c_1=1.0$, $L=2\pi$,
$J=20,40,80,160$, $\dx=L/(J+1)$:
\begin{itemize}
\item $(u_l,u_r,v_l,v_r)=(1,2,-1,4)$, 
  $\dt=\dx^2/\nu_1/2$, $(\theta_f,\theta_s)=(0,0)$ [explicit,explicit]
\item $(u_l,u_r,v_l,v_r)=(2,4,-1,4)$,
  $\dt=0.01$, $(\theta_f,\theta_s)=(1/2,1/2)$ [Crank-Nicolson,Crank-Nicolson]
\end{itemize}
From Figure \ref{fig:asDNH}, we see that
the asymptotic error has the behavior predicted by Theorem
\ref{Th:ApproxLieDirInHomo} no matter the values of $\theta_f$ and
$\theta_s$: the numerical order is close to $2$ in $\dx$ (provided
the CFL condition is fulfilled).

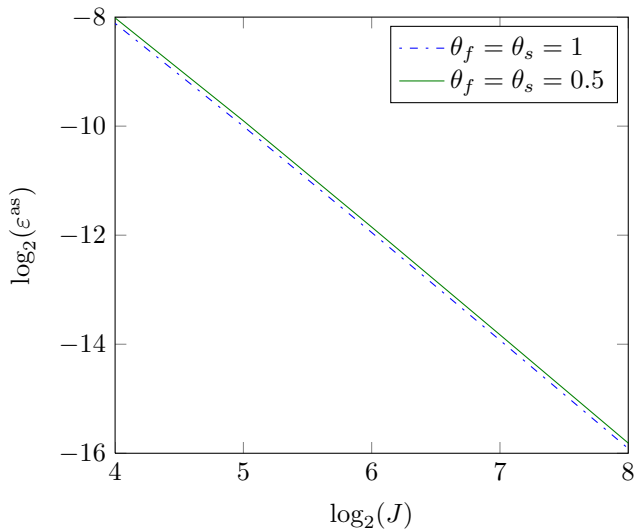
\begin{figure}[!h] 
  \centering
 \begin{tikzpicture}

 \begin{axis}[%
 width=6.82222222222222cm,
 height=5.80333333333333cm,
 scale only axis,
 xmin=4,
 xmax=8,
 xlabel={$\log_2(J)$},
 ymin=-16,
 ymax=-8,
 ylabel={$\log_2(\varepsilon^{\rm as})$},
 legend style={draw=black,fill=white,legend cell align=left}
 ]
 \addplot [color=blue,dash pattern=on 1pt off 3pt on 3pt off 3pt]
   table[row sep=crcr]{4	-8.1151048577972\\
 5	-10.0018663228\\
 6	-11.950857740424\\
 7	-13.9270671770892\\
 8	-15.9136466613269\\
 };
 \addlegendentry{$\theta_f=\theta_s=1$};

 \addplot [color=black!50!green]
   table[row sep=crcr]{4	-8.0151048577972\\
 5	-9.90186632280002\\
 6	-11.850857740424\\
 7	-13.8270671770892\\
 8	-15.8136466613269\\
 };
 \addlegendentry{$\theta_f=\theta_s=0.5$};

 \end{axis}
 \end{tikzpicture}%

  \caption{${\rm L}^\infty$-error of the asymptotic numerical and exact states for
    explicit/explicit and Crank-Nicolson/Crank-Nicolson schemes. The numerical
    order is $1.95$. {We chose these two cases because, for the ODE toy-problem, the A-orders were respectively 1 and 2. There is no visible difference for the PDE problem.}
}
  \label{fig:asDNH}
\end{figure}

 \section{Conclusion and perspectives}
Speeding up computations through a subcycling procedure is widely used, but the
asymptotic behavior of the numerical solution in large time is a
concern.
Indeed, there are two limits involved, as $\dt$ (and $\dx$ in the PDE
case) tend to $0$ and as the final time $T$ tends to $+\infty$. We
proved for an illustrative case of ODE systems that the asymptotic error is
at least of the same order of
convergence as the local-in-time error, and can even be better since
there exists 
  combinations of (local) first order schemes that lead to second
  asymptotic order ! The analysis of the convergence rate of the subcycled
  scheme has been performed for ODE and PDE toy-models, showing that the Strang
  splitting associated with Crank-Nicolson schemes was the only way to get a
  second order approximation of the exact rate. Finally, in the case of a
  coupled reaction-diffusion system with inhomogeneous Dirichlet boundary
  conditions, we were able to prove
  that the asymptotic numerical solution obtained through a subcycled scheme is
  a uniform-in-$\dt$ second order approximation in $\dx$ of the exact asymptotic
  state.\\ 
  {Our aim is now to tackle much more difficult and general cases, such as
    a fully
  coupled hyperbolic-parabolic system. The level of complexity is a lot higher in such cases,
  since the ratio of the characteristic times of the different phenomena, which 
  we modelled in the present paper as a constant $N$, cannot be defined at the continuous
  level, because of the speed of propagation of a hyperbolic equation is finite whereas the speed of 
  propagation of a parabolic equation is infinite. At the discrete level, the ratio will appear
  in the CFL conditions ($\dt=O(\dx$) for the hyperbolic equation and $\dt=O(\dx^2)$ for the parabolic
  equation) : the subcycling techniques can provide schemes which are a lot more efficient than traditional
  splitting schemes, allowing for a CFL $\dt=O(\dx)$ if the parabolic equation is subcycled. The analysis will however 
  be intricate since $N$ is related to $1/{\dx}$. }

\section*{Aknowledgment}

The authors would like to thank warmly
the referees for their highly valuable comments
which helped improve the quality of this paper.

\clearpage

\appendix

\section{FS to SF computations}
\label{sec:FS_to_SF}

Let us define the matrix $$\Pi:=
\begin{pmatrix}
  0&1\\1&0
\end{pmatrix},
$$
and let us denote by $G[\alpha,\beta]$ a matrix of the form
\eqref{formeGdeltat}.
Let $A$ be a 2-by-2 matrix.
Then $\Pi\, A$ exchanges the lines of $A$ and $A\,\Pi$
exchanges the columns. Thus, if $\lambda\in\R$,
\begin{equation*}
  \Pi\, M_s(\lambda)\,\Pi=M_f(\lambda),
\end{equation*}
and, if $\alpha,\beta\in (0,1)$,
\begin{equation*}
  \Pi\, G[\alpha,\beta]\, \Pi=G[\beta,\alpha].
\end{equation*}
 Since $\Pi^2=I$, it means that $M_s(\lambda)$ and $M_f(\lambda)$ are similar,
 thus share the same spectrum.   
In Section~\ref{sec:linearanalysis}, we computed the A-orders and rates of
convergence of SF (fast, then slow) and FSF (fast, then slow, then fast) type
schemes. We show here that the results we obtained can easily be applied to FS and SFS schemes.
\paragraph{Lie-splitting schemes}
  Consider  $\lambda_s, \lambda_f\in(0,1)$.
According to Lemma~\ref{boris} and Remark \ref{Galphabeta}, we define
$\alpha(\lambda_s,\lambda_f)$ and $\beta(\lambda_s,\lambda_f)$ as
\begin{equation*}
M_s(\lambda_s)M_f(\lambda_f)=G[\alpha(\lambda_s,\lambda_f),\beta(\lambda_s,\lambda_f)].
\end{equation*}
Since
\begin{equation*}
M_f(\lambda_f)M_s(\lambda_s)=\Pi\, M_s(\lambda_f)M_f(\lambda_s)\,\Pi,
\end{equation*}
we infer that
\begin{equation*}
  M_f(\lambda_f)M_s(\lambda_s)=\Pi\,
  G[\beta(\lambda_f,\lambda_s),\alpha(\lambda_f,\lambda_s)] \,\Pi .
\end{equation*}
Consequently, we can deduce  the convergence rate and the A-order of the FS
methods at once from the results we obtained for the SF methods.
\paragraph{Strang-splitting methods} In the same way, knowing 
$M_f(\lambda_f)M_s(\lambda_s)M_f(\lambda_f)$, one can deduce the
convergence rate and the A-order of $M_f(\lambda_f)M_s(\lambda_s)M_f(\lambda_f)$
by noting that
\begin{equation*}
  M_s(\lambda_s)M_f(\lambda_f)M_s(\lambda_s)=\Pi\, M_f(\lambda_s)M_s(\lambda_f)M_f(\lambda_s)\,\Pi.
\end{equation*}
\section{Helpful estimates for the proof of Theorem \ref{Th:ApproxLieDirInHomo}}
The following lemma is helpful for the proof of Theorem
\ref{Th:ApproxLieDirInHomo}.
  \begin{lemma}
    \label{Lemmebornitude}
    For all positive $c,\mu,L$,
there exists a positive constant $C>0$ such that, for all $J\in\N^\star$ and
    all $\delta t\in (0,{\rm CFL}(J))$, we have
    \begin{equation*}
      \normetriple{M_s}_2\leq C \qquad {\rm and} \qquad
      \normetriple{M_f}_2\leq C.
    \end{equation*}
  \end{lemma}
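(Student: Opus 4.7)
The plan is to reduce the bounds on $\normetriple{M_s}_2$ and $\normetriple{M_f}_2$ to a uniform estimate on a family of $2\times 2$ matrices by simultaneously diagonalizing all the $J\times J$ blocks of $M_s$ (resp.\ $M_f$) via the orthogonal matrix $Z$ from \eqref{diagA}.

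First, the Euclidean norm on $\R^J$ used in Section \ref{sec:reactiondiffusion} is $\sqrt{\dx/L}=1/\sqrt{J+1}$ times the standard one; since this is a uniform scaling, the induced operator norm coincides with the usual operator 2-norm and I may work with the latter throughout. Since $A$ is real symmetric, the diagonalizing matrix $Z$ of \eqref{diagA} can be chosen orthogonal, so $\mathcal{Z}=\mathrm{diag}(Z,Z)$ defined in \eqref{MatriceP} is orthogonal on $\R^{2J}$ and conjugation by $\mathcal{Z}$ therefore preserves the operator 2-norm.

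Second, each of the four $J\times J$ blocks of $M_s$ is a rational function of $A$ whose denominator has no root on the spectrum of $A$ (thanks to $\theta_s\in[0,1]$ and the positivity of the eigenvalues of $cI+\nu A/\dx^2$). Hence these blocks are all diagonalized by $Z$, so that $\mathcal{Z}^{-1}M_s\mathcal{Z}$ has the block form of $M_s$ in \eqref{formematDH} with each $J\times J$ block replaced by a diagonal matrix. Up to the orthogonal permutation that reorders the basis of $\R^{2J}$ into pairs $(u_i,v_i)_{1\leq i\leq J}$, the matrix $M_s$ is orthogonally similar to a block-diagonal matrix made of $J$ blocks of size $2\times 2$, the $i$-th of which reads
\begin{equation*}
m_{s,i}=\begin{pmatrix} 1 & 0 \\ c\dt/(1+\theta_s\dt\mu_i) & (1-(1-\theta_s)\dt\mu_i)/(1+\theta_s\dt\mu_i) \end{pmatrix},
\end{equation*}
with $\mu_i=c+\nu\lambda_i/\dx^2$ as in \eqref{eq:mui}. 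Consequently $\normetriple{M_s}_2=\max_{1\leq i\leq J}\|m_{s,i}\|_2$, and the same reduction applies to $M_f$ (replacing $\dt$ by $\dt/N$ and $\theta_s$ by $\theta_f$).

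Third, the CFL condition $\dt\in(0,\CFL(J))$ gives $\dt\mu_i\leq 1$ for every $i$ (since $\mu_i\leq c+4\nu/\dx^2$) and in particular $c\dt\leq 1$. Combined with $\theta_s\in[0,1]$, each entry of $m_{s,i}$ is bounded in absolute value by $1$: the denominator is at least $1$, the diagonal numerator satisfies $|1-(1-\theta_s)\dt\mu_i|\leq 1$, and the off-diagonal entry is at most $c\dt\leq 1$. The Frobenius norm of $m_{s,i}$ is therefore bounded by $\sqrt{3}$, uniformly in $i$, $J$ and $\dt$, and the same bound holds for $m_{f,i}$ by the analogous computation. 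Setting $C=\sqrt{3}$ concludes the proof. The main technical point is the block-diagonalization by the orthogonal change of basis (painless but requiring that the permutation be orthogonal so that it preserves the 2-norm); once that is set up, the bound reduces to the above trivial $2\times 2$ computation under the CFL condition.
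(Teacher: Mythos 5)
Your proof is correct, but it takes a genuinely different route from the paper's. The paper factors $M_f$ as the product of the symmetric matrix $\mathrm{diag}(C_f^{-1},I_J)$, whose $\normetriple{\cdot}_2$-norm is $1$ because its spectrum lies in $(0,1]$, with the non-symmetric factor $R=\bigl(\begin{smallmatrix} B_f & c\dt I_J\\ 0 & I_J\end{smallmatrix}\bigr)$, and then bounds $\normetriple{R}_2^2=\rho(R^{\t}R)$ by computing explicitly the $J$ quadratic characteristic polynomials of $R^{\t}R$ (available because $B_f$ is a polynomial in $A$), arriving at the constant $C=\sqrt{2(2+c^2/(c+16\nu/L^2)^2)}$. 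Your route --- orthogonal block-diagonalization of the whole of $M_s$ and $M_f$ into $2\times 2$ blocks via $\mathcal Z$ and a pairing permutation, followed by a Frobenius-norm bound on each block --- is precisely the machinery the paper deploys later for Proposition \ref{ControleInverse}, so it unifies the two appendix results and yields the cleaner absolute constant $\sqrt{3}$; the paper's factorization trick, by contrast, avoids having to invoke the orthogonality of $Z$ and the norm identity for block-diagonal matrices. Both arguments ultimately rest on the same two facts: every block is a rational function of $A$ with nonvanishing denominator on the spectrum, and the CFL condition \eqref{stabcond1} forces $\dt\mu_i\leq 1$ and $c\dt\leq 1$. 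One small imprecision in your write-up: for $M_f$ as defined in \eqref{formematDH} (with $p=1$), the blocks involve $B_f(\dt)$, $C_f(\dt)$ and the off-diagonal entry $c\dt\,C_f^{-1}$ --- the factor $N$ in the fast vector field cancels the $1/N$ in the time-step --- rather than ``$\dt$ replaced by $\dt/N$''; this does not affect your conclusion, since the entries are bounded by $1$ under the CFL condition in either reading.
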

  \begin{remark}
    Note that the constant $C$ above is in fact greater than $1$, even if the
    matrices have their spectrum in the interval $[0,1]$. This is due to the
    lack of symmetry in those matrices.
  
  \end{remark}

\begin{proof}
  Since the situation for $M_s$ and $M_f$ is very similar, we prove the
  inequality for $M_f$ only, and we start with the decomposition
  \begin{equation*}
    M_f =
    \begin{pmatrix}
      C_f^{-1} & 0 \\
      0 & I_J \\
    \end{pmatrix}
    \begin{pmatrix}
      B_f & c \dt I_J \\
      0 & I_J \\
    \end{pmatrix}.
  \end{equation*}
  Recall that for any square matrix $R$ with real
  coefficients, $\normetriple{R}_2^2 = \rho(R^{\t} R)$,
  where $\rho$ denotes the spectral radius.
  The CFL condition \eqref{stabcond1} ensures that the spectrum of $C_f^{-1}$
  lies in $(0,1]$. Since the first matrix in the product above is symmetric, we
  infer that its norm is $\sqrt{\rho(I_J)}=1$.
  Hence, using the algebra property for $\normetriple{\cdot}_2$,
  it is sufficient
  to prove the result for the second matrix in the product above, which is {\it
    not} symmetric. We are left with the computation of
  the eigenvalues of the symmetric non-negative matrix
  \begin{equation*}
    \begin{pmatrix}
      B_f^2 & c \dt B_f \\
      c\dt B_f & (1 + c^2 \dt^2) I_J\\
    \end{pmatrix},
  \end{equation*}
  the eigenvalues of which are the $2J$ roots of the $J$ polynomials
  \begin{equation*}
    X^2-(\mu_p^2 + (1+c^2\dt^2))X + \mu_p^2, \qquad 1\leq p \leq J,
  \end{equation*}
  where $(\mu_p)_{1\leq p\leq J}$ denotes the list of the eigenvalues of $B_f$.
  The CFL condition \eqref{stabcond1} ensures that
  for all ${p\in\{1,\dots,J\}}$, $\mu_p\in [0,1]$. Hence, the
  greatest eigenvalue of the corresponding polynomial above is less than
  $2(1+1+c^2\dt^2)$.
  Moreover, the CFL condition also provides us with an estimate on $\dt$
  which yields the result with ${C=\sqrt{2(2+c^2/(c+16\nu/L^2)^2)}}$.
\end{proof}

One can control the inverse of the matrix of System \eqref{eq:ls}
by the following proposition to prove Theorem \ref{Th:ApproxLieDirInHomo}.
\begin{proposition}
  \label{ControleInverse}
  There exists a positive constant $C>0$ such that for all $J\in \N^\star$ and
  all $\dt\in(0,\CFL(J))$,
  \begin{equation}
    \label{EstimationInverse}
    \normetriple{(I-M_s M_f^N)^{-1}}_2 \leq \frac{C}{\dt}.
  \end{equation}
\end{proposition}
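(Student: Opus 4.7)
The plan is to simultaneously diagonalize $M_sM_f^N$ with the spatial operator $A$ in order to reduce the estimate to $J$ independent $2\times 2$ algebraic problems, and then to control each one directly using the CFL condition.

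First, recall the matrix $\mathcal{Z}$ from \eqref{MatriceP} used in the proof of Theorem~\ref{decroissancenum}. Discrete orthogonality of the sine eigenvectors yields $Z^\top Z = \tfrac{J+1}{2}\, I_J$, so $\mathcal{Z}$ is a scalar multiple of an orthogonal matrix and consequently $\normetriple{\mathcal{Z}}_2\, \normetriple{\mathcal{Z}^{-1}}_2 = 1$. Therefore $\normetriple{(I_{2J}-M_sM_f^N)^{-1}}_2 = \normetriple{(I_{2J} - \mathcal{Z}^{-1}M_sM_f^N \mathcal{Z})^{-1}}_2$. As noted in the proof of Theorem~\ref{decroissancenum}, the conjugate $\mathcal{Z}^{-1}M_sM_f^N \mathcal{Z}$ is obtained from \eqref{eq:formematricePsi} (with $p=q=q'=1$ and $p'=N$) by replacing $A$ with its diagonal form $D$; its four $J\times J$ blocks are therefore diagonal. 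An orthogonal permutation of the basis interleaving the fast and slow unknowns turns this matrix into a block-diagonal matrix with $J$ blocks $N_i$ of size $2\times 2$, indexed by the eigenvalues $\mu_i = c + \nu\lambda_i/(\delta x)^2$ of $cI + \nu A/(\delta x)^2$. It thus suffices to prove the uniform estimate
\begin{equation*}
\normetriple{(I_2 - N_i)^{-1}}_2 \le C/\dt \qquad \text{for all } i\in\{1,\dots,J\},\ J\in\N^\star,\ \dt\in (0,\CFL(J)).
\end{equation*}

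For each block, the computations of Theorem~\ref{decroissancenum} provide the identity
\begin{equation*}
\det(I_2 - N_i) = (1-P(\mu_i))(1-Q(\mu_i)) - \Sigma(\mu_i) = \frac{\dt^2\, \widetilde{\Sigma}_{f,N}(\mu_i)\, (\mu_i^2 - c^2)}{(1+\theta_f\dt\mu_i)(1+\theta_s\dt\mu_i)}.
\end{equation*}
An elementary monotonicity analysis of $J\mapsto (J+1)^2\sin^2(\pi/(2(J+1)))$ shows that $\mu_1 - c \ge 8\nu/L^2$ for every $J\in\N^\star$, hence $\mu_i - c$ is bounded below by a positive constant uniformly in $i$ and $J$. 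Combined with $\widetilde{\Sigma}_{f,N} \ge 1$ and the CFL bound $\dt \mu_i \le 1$, this yields the lower bound $\det(I_2 - N_i) \ge C_0 \dt^2$ in the regime $\mu_i \le 2c$. When $\mu_i > 2c$, the identity $\mu_i^2 - c^2 \ge 3\mu_i^2/4$ strengthens this into $\det(I_2 - N_i) \ge C_0 \dt^2\mu_i^2$. In parallel, I will show that each entry of the adjugate matrix
$\begin{pmatrix} 1-Q-\Sigma & N_{12} \\ N_{21} & 1-P \end{pmatrix}$
can be written as $\dt$ times a bounded quantity when $\mu_i\le 2c$, and as $\dt$ times $\mathcal O(\mu_i)$ when $\mu_i > 2c$: the off-diagonal entries $N_{12}$ and $N_{21}$ carry an explicit factor $c\dt$, while $1-P$ and $1-Q-\Sigma$ each admit an explicit $\dt$-factor through rewriting of the form $1-P=\frac{\dt \mu_i}{1+\theta_f\dt\mu_i}\,\widetilde{\Sigma}_{f,N}(\mu_i)$ and an analogous expression for $1-Q-\Sigma$. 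Combining these estimates via the Frobenius bound $\normetriple{\cdot}_2 \le \normetriple{\cdot}_F$ yields the required $C/\dt$ control.

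The main obstacle is the saturated regime $\dt \mu_i = \mathcal{O}(1)$ with $\mu_i$ large: several entries of the adjugate are no longer small individually, and the uniform $C/\dt$ bound is only recovered by pairing their $\mathcal{O}(\dt\mu_i)$ size against the enhanced $\mathcal{O}(\dt^2\mu_i^2)$ lower bound on the determinant. Checking that all constants produced throughout the case analysis are independent of $\dt$, $\mu_i$ and $J$ is the essential technical content of the proof.
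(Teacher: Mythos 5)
Your proposal is correct, and its skeleton coincides with the paper's: conjugation by $\mathcal Z$ to reduce to $J$ independent $2\times2$ blocks, inversion through the adjugate, and the exact determinant identity --- your closed form for $\det(I_2-N_i)$ is precisely \eqref{determinant} after factoring $1-(\psi_f\phi_f^{-1})^N=(1-\psi_f\phi_f^{-1})\widetilde\Sigma_{f,N}$. Where you genuinely depart from the paper is in the one delicate step you correctly identify, namely the diagonal adjugate entries in the saturated regime $\dt\mu_i={\mathcal O}(1)$. The paper needs no case analysis there: in \eqref{ecritsecondterme} the factor $\bigl(1-(\psi_f\phi_f^{-1})^N(\mu_i)\bigr)^2$ common to $a_i^2+d_i^2$ and to the squared determinant is divided out exactly, leaving only uniformly bounded quantities such as $\mu_i/(\mu_i^2-c^2)\le L^2/(4\nu)$ and $\mu_i\dt/\bigl(1-(\psi_f\phi_f^{-1})^N(\mu_i)\bigr)\le 2$. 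You instead accept the cruder bounds $|1-P|\le N\dt\mu_i$ and $|1-Q-\Sigma|\le\dt(\mu_i+cN)$ and compensate the extra factor $\mu_i$ by the strengthened lower bound $\det\ge C_0\dt^2\mu_i^2$ for $\mu_i>2c$, which does follow from your identity since $\widetilde\Sigma_{f,N}\ge1$ and the denominators are at most $4$ under \eqref{stabcond1}; in the complementary regime $\mu_i\le 2c$ everything is ${\mathcal O}(\dt)$ over $C_0\dt^2$ and the claim is immediate. Both routes close, with all constants depending only on $c,\nu,L,N$. Yours trades the paper's one-line algebraic cancellation for a two-regime bookkeeping: slightly longer, but more robust, since it uses only order-of-magnitude information on the entries and would survive modifications of the scheme for which the exact cancellation is unavailable. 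Your sharpened bound $\mu_1-c\ge8\nu/L^2$ (monotonicity in $J$, minimum attained at $J=1$) is also correct, though the paper's $4\nu/L^2$ obtained from $\sin x\ge 2x/\pi$ already suffices.
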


\begin{proof}
  Let us fix $J\in \N^\star$ and $\dt\in(0,\CFL(J))$.  Using the conjugation
  with the orthogonal matrix $\mathcal Z$ (see \eqref{MatriceP}), we have that
  the $\normetriple{\cdot}_2$-norm of $I_{2J}-M_s M_f^N$ is equal to that of the
  same matrix where $A$ is replaced with $D$ (see \eqref{diagA}). The latter
  matrix has a very particular structure: the four $J$-by-$J$ matrices defining
  it are diagonal.  Let us denote by $(a_i)_{1\leq i\leq J}$, $(b_i)_{1\leq
    i\leq J}$, $(c_i)_{1\leq i\leq J}$, and $(d_i)_{1\leq i\leq J}$ these
  entries such that
  \begin{equation*}
    \zeta:={\mathcal Z}^{-1} (I-M_s M_f^N) {\mathcal Z} = 
    \begin{pmatrix}
      \begin{matrix}
        a_1 & 0 & 0 \\
        0   & \ddots & 0 \\
        0   & 0 & a_J\\
      \end{matrix}
      &
      \begin{matrix}
        b_1 & 0 & 0 \\
        0   & \ddots & 0 \\
        0   & 0 & b_J\\
      \end{matrix}
      \\
      \begin{matrix}
        c_1 & 0 & 0 \\
        0   & \ddots & 0 \\
        0   & 0 & c_J\\
      \end{matrix}
      &
      \begin{matrix}
        d_1 & 0 & 0 \\
        0   & \ddots & 0 \\
        0   & 0 & d_J\\
      \end{matrix}
      \\
    \end{pmatrix}.
  \end{equation*}
  The eigenvalues of $\zeta$ lie in $(0,1)$ (see Theorem
  \ref{decroissancenum}). Hence, $\zeta$ is invertible and its inverse
is given by
  \begin{equation*}
    \zeta^{-1}= {\mathcal Z}^{-1} (I-M_s M_f^N)^{-1} {\mathcal Z} = 
    \begin{pmatrix}
      \begin{matrix}
        \alpha_1 & 0 & 0 \\
        0   & \ddots & 0 \\
        0   & 0 & \alpha_J\\
      \end{matrix}
      &
      \begin{matrix}
        \beta_1 & 0 & 0 \\
        0   & \ddots & 0 \\
        0   & 0 & \beta_J\\
      \end{matrix}
      \\
      \begin{matrix}
        \gamma_1 & 0 & 0 \\
        0   & \ddots & 0 \\
        0   & 0 & \gamma_J\\
      \end{matrix}
      &
      \begin{matrix}
        \delta_1 & 0 & 0 \\
        0   & \ddots & 0 \\
        0   & 0 & \delta_J\\
      \end{matrix}
      \\
    \end{pmatrix},
  \end{equation*}
  where for all $i\in\{1,\dots,J\}$,
  \begin{equation*}
    \begin{pmatrix}
      a_i & b_i \\
      c_i & d_i \\
    \end{pmatrix}^{-1} =
    \begin{pmatrix}
      \alpha_i & \beta_i \\
      \gamma_i & \delta_i \\
    \end{pmatrix}=:m_i.
  \end{equation*}
  One can check easily that
  \begin{equation*}
    \normetriple{\zeta^{-1}}_2
    = {\max}_{1\leq i\leq J} \normetriple{m_i}_2.
  \end{equation*}
  Moreover, we have
  \begin{equation*}
    \normetriple{m_i}_2^2 = \frac{a_i^2 + b_i^2 + c_i^2 + d_i^2 +
      \sqrt{(a_i^2 + b_i^2 + c_i^2 + d_i^2)^2-4(a_id_i-b_i c_i)^2}}{2(a_id_i-b_i c_i)^2}
    \leq
    \frac{a_i^2 + b_i^2 + c_i^2 + d_i^2}{(a_id_i-b_i c_i)^2}.
  \end{equation*}
  We split the upper bound above as follows
  \begin{equation}
    \label{EstimNorme2}
    \normetriple{m_i}_2^2 \leq
    \frac{b_i^2 + c_i^2}{(a_id_i-b_i c_i)^2}
    +
    \frac{a_i^2 + d_i^2}{(a_id_i-b_i c_i)^2},
  \end{equation}
  and we prove an estimate of the form ${\mathcal O}(1/\dt^2)$ for the two terms
  in the sum above.  In view of \eqref{eq:formematricePsi}, we have
  \begin{equation*}
    a_i=1-P(\mu_i), \
    b_i=-c\dt(\phi_f^{-1}\tilde\Sigma_{f,N})(\mu_i), \
  \end{equation*}
  and
  \begin{equation*}
    c_i=-c\dt(\phi_s^{-1}P)(\mu_i)\ {\rm and} \
    d_i=1-Q(\mu_i) - c^2\dt^2(\phi_s^{-1}\phi_f^{-1}\tilde\Sigma_{f,N})(\mu_i),
  \end{equation*}
  where the $\mu_i$ are defined by \eqref{eq:mui} as the ordered eigenvalues
  of $cI+\nu A/\dx^2$.
  For the first term in the upper bound \eqref{EstimNorme2}, let us show that
  the numerator is ${\mathcal O}(\dt^2)$ while the denominator is bounded from
  below by a positive constant times $\dt^4$.\\
  On the one hand, we have
  \begin{equation}
    \label{numerateur}
    |b_i|^2 \leq c^2 N^2 \dt^2 \qquad \text{and} \qquad
    |c_i|^2 \leq c^2 \dt^2.
  \end{equation}
  On the other hand, for all $i\in\{1,\dots,J\}$, we have
  \begin{align*}
    a_i d_i-b_i c_i & = (1-P(\mu_i))(1-Q(\mu_i))
    - c^2\dt^2 (\phi_s^{-1}\phi_f^{-1}\tilde \Sigma_{f,N})(\mu_i)\\
    & = \bigg(1-(\psi_f\phi_f^{-1})^N(\mu_i)\bigg)(1-Q(\mu_i)) - c^2 \dt^2 \bigg(\phi_s^{-1} \phi_f^{-1} \frac{1-(\psi_f\phi_f^{-1})^N}{1-\psi_f\phi_f^{-1}}\bigg)(\mu_i)\\
    & = \left(\bigg(\frac{1-(\psi_f\phi_f^{-1})^N}{\phi_s}\bigg) \bigg(\phi_s -
      \psi_s-\frac{c^2\dt^2}{\phi_f-\psi_f}\bigg)\right)(\mu_i).
  \end{align*}
  The CFL condition \eqref{stabcond1} ensures that $\dt\mu_i$,
  $\psi_s(\mu_i),\phi_s^{-1}(\mu_i),\psi_f(\mu_i)$, $\phi_f^{-1}(\mu_i)$ and
  $P(\mu_i)$ belong to $(0,1]$.  In view of the definitions (\ref{def}), we have
  \begin{equation*}
    (\phi_s-\psi_s)(\mu_i)=\dt\mu_i=(\phi_f-\psi_f)(\mu_i),
  \end{equation*}
  so that
  \begin{equation}
    \label{determinant}
    a_i d_i-b_i c_i = \dt \frac{(1-(\psi_f\phi_f^{-1})^N(\mu_i))}{\phi_s(\mu_i)}
    \frac{\mu_i^2-c^2}{\mu_i}.
  \end{equation}
  The CFL condition \eqref{stabcond1} implies that $1/\phi_s(\mu_i)\geq 1/2$ and
  \begin{equation*}
    0<(\psi_f\phi_f^{-1})^N(\mu_i)\leq(\psi_f\phi_f^{-1})(\mu_i)
    = \frac{1-(1-\theta_f)\dt\mu_i}{1+\theta_f\dt\mu_i}.
  \end{equation*}
  Therefore, we have
  \begin{equation}
    \label{estimegeom}
    1-(\psi_f\phi_f^{-1})^N(\mu_i)\geq 1-(\psi_f\phi_f^{-1})(\mu_i)
    = \frac{\dt\mu_i}{1+\theta_f\dt\mu_i} \geq \frac{\dt\mu_i}{2}.
  \end{equation}
  This allows to bound from below
  \begin{equation*}
    a_i d_i-b_i c_i\geq \frac{\dt^2}{4} (\underbrace{\mu_i+c}_{\geq c})(\underbrace{\mu_i-c}_{=\nu\lambda_i/\dx^2})
    \geq c\nu\frac{\dt^2}{4}\frac{\lambda_1}{\dx^2}.
  \end{equation*}
  Recall that for all $x\in(0,\pi/2)$, $\sin(x)\geq 2x/\pi$, so that
  \begin{equation}
    \label{estimconcave}
    \frac{\lambda_1}{\dx^2} = \frac{4}{\dx^2}
    \sin^2\Big(\frac{\pi}{2}\frac{1}{(J+1)}\Big)
    \geq 4 \frac{(J+1)^2}{L^2} \frac{4}{\pi^2} \frac{\pi^2}{4} \frac{1}{(J+1)^2}
    \geq \frac{4}{L^2}.
  \end{equation}
  This proves
  \begin{equation}
    \label{denominateur}
    a_i d_i-b_i c_i\geq \frac{c\nu}{L^2} \dt^2.
  \end{equation}
  Using \eqref{numerateur} and \eqref{denominateur}, there exists a positive
  constant $C$ such that
  \begin{equation}
    \label{interm1}
    \forall J\in\N^\star,\quad \forall \dt\in(0,\CFL(J)),\qquad
    \frac{b_i^2 + c_i^2}{(a_id_i-b_i c_i)^2} \leq \frac{C}{\dt^2}.
  \end{equation}
  Let us now bound the second term in the right hand side of
  \eqref{EstimNorme2}.  Let us fix $J\in\N^\star$ and $i\in (0,\CFL(J))$ again.
  From \eqref{determinant}, we have
  \begin{equation*}
    \frac{1}{(a_i d_i - c_i b_i)^2} =
    \frac{1}{\dt^2} \frac{\phi_s^2(\mu_i)}{(1-(\psi_f\phi_f^{-1})^N(\mu_i))^2}
    \bigg(\frac{\mu_i}{\mu_i^2-c^2}\bigg)^2.
  \end{equation*}
  A similar direct calculation yields
  \begin{eqnarray*}
    a_i^2+d_i^2 & = & \bigg(1-(\psi_f\phi_f^{-1})^N(\mu_i)\bigg)^2
    +\bigg(\frac{\phi_s(\mu_i)-\psi_s(\mu_i)}{\phi_s(\mu_i)}
    -c^2\dt^2\frac{1}{\phi_s\phi_f(\mu_i)}\frac{1-(\psi_f\phi_f^{-1})^N(\mu_i)}{1-\psi_f\phi_f^{-1}(\mu_i)}\bigg)^2 \\
    & = & \bigg(1-(\psi_f\phi_f^{-1})^N(\mu_i)\bigg)^2
    \bigg[1+\frac{1}{\phi_s^2(\mu_i)}\bigg(\frac{\mu_i \dt}{1-(\psi_f\phi_f^{-1})^N(\mu_i)}
    -c^2\dt^2\frac{1}{\phi_f(\mu_i)-\psi_f(\mu_i)}\bigg)^2\bigg]\\
    & = & \bigg(1-(\psi_f\phi_f^{-1})^N(\mu_i)\bigg)^2
    \bigg[1+\frac{1}{\phi_s^2(\mu_i)}\bigg(\frac{\mu_i \dt}{1-(\psi_f\phi_f^{-1})^N(\mu_i)}
    -\frac{c^2}{\mu_i}\dt\bigg)^2\bigg].\\
  \end{eqnarray*}
  We infer
  \begin{equation}
    \label{ecritsecondterme}
    \frac{a_i^2+d_i^2}{(a_i d_i - c_i b_i)^2} =
    \frac{1}{\dt^2} \phi_s^2(\mu_i)
    \bigg(\frac{\mu_i}{\mu_i^2-c^2}\bigg)^2
    \bigg[1+\frac{1}{\phi_s^2(\mu_i)}\bigg(\frac{\mu_i \dt}{1-(\psi_f\phi_f^{-1})^N(\mu_i)}
    -\frac{c^2}{\mu_i}\dt\bigg)^2\bigg].
  \end{equation}
  We can bound the terms in the product above as follows. The CFL condition
  \eqref{stabcond1} implies that $\phi_s^2(\mu_i)\leq 4$.  Moreover, using
  \eqref{estimconcave}, we have
  \begin{equation*}
    \frac{\mu_i}{\mu_i^2-c^2} = \underbrace{\frac{\mu_i}{(\mu_i+c)}}_{\leq 1}
    \frac{1}{(\mu_i-c)}\leq \frac{\dx^2}{\nu \lambda_i}
    \leq \frac{\dx^2}{\nu \lambda_1}
    \leq \frac{L^2}{4 \nu}.
  \end{equation*}
  Recall that $1/\phi_s(\mu_i)^2\leq 1$.  From \eqref{estimegeom}, we obtain
$\mu_i \dt/(1-(\psi_f\phi_f^{-1})^N(\mu_i))\leq 2$.
  For the last term in the product, we have
  \begin{equation*}
    \frac{c^2}{\mu_i}\dt = \underbrace{c\dt}_{\leq 1} \underbrace{\frac{c}{c+\nu\lambda_i/\dx^2}}_{\leq 1} \leq 1.
  \end{equation*}
  Using these inequalities in \eqref{ecritsecondterme}, taking products and
  using Young's inequality, we infer that
  \begin{equation}
    \label{interm2}
    \forall J\in\N^\star,\quad \forall \dt\in(0,\CFL(J)),\qquad
    \frac{a_i^2 + d_i^2}{(a_id_i-b_i c_i)^2} \leq \frac{11}{4}\frac{L^4}{\nu^2}\frac{1}{\dt^2}.
  \end{equation}
  The inequalities \eqref{interm1} and \eqref{interm2} together with
  \eqref{EstimNorme2} prove the result.
\end{proof}

\bibliographystyle{alpha}
\bibliography{split}

\end{document}